\newtheorem{theorem}{Theorem}[section]
\newtheorem{lemma}[theorem]{Lemma}
\newtheorem{proposition}[theorem]{Proposition}
\newtheorem{corollary}[theorem]{Corollary}
\theoremstyle{definition}
\newtheorem{definition}[theorem]{Definition}
\newtheorem*{example}{Example}
\newtheorem*{remark}{Remark}
\numberwithin{equation}{section}
\numberwithin{theorem}{section}
\begin{document}

\title{Tropical Nevanlinna theory and second main theorem}

\author[I. Laine]{Ilpo Laine}

\address{\sc University of Joensuu, Department of Mathematics
\newline P.O. Box 111, FI-80101 Joensuu, Finland}
\email{ilpo.laine@joensuu.fi}

\author[K. Tohge]{Kazuya Tohge}

\address{\sc College of Science and Engineering, Kanazawa University,
\newline Kakuma-machi, Kanazawa, 920-1192, Japan}
\email{tohge@t.kanazawa-u.ac.jp}

\thanks{The first author has been supported by the Academy of Finland
grant 124954}

\thanks{The second author has been supported by the Japan Society
for the Promotion of Science Grant-in-Aid for Scientific Research
(C) 19540173}

\subjclass[2000]{Primary 14; Secondary 30D}

\maketitle

\section{\textbf{Introduction}}\label{TropI}

\medskip

Tropical Nevanlinna theory, see \cite{HS}, describes value
distribution of continuous piecewise linear functions of a real
variable whose one-sided derivatives are integers at every point,
similarly as meromorphic functions are described in the classical
Nevanlinna theory \cite{CYe}, \cite{H}, \cite{L}. In this paper, we
take an extended point of view to tropical meromorphic functions by
dispensing with the requirement of integer one-sided derivatives.
Accepting that multiplicities of poles, resp. zeros, may be
arbitrary real numbers instead of being integers, resp. rationals,
as in the classical theory of (complex) meromorphic functions, resp.
of algebroid functions, it appears that previous results such as in
\cite{HS}, \cite{LY2}, continue to be valid, with slight
modifications only in the proofs.

\medskip

Recalling the standard one-dimensional tropical framework, we shall
consider a max-plus semi-ring endowing $\mathbb{R}\cup\{ -\infty\}$
with (tropical) addition
$$x\oplus y:=\max (x,y)$$
and (tropical) multiplication
$$x\otimes y:=x+y.$$
We also use the notations $x\oslash y:=x-y$ and $x^{\otimes\alpha}
:=\alpha x$, for $\alpha\in\mathbb{R}$. The identity elements for
the tropical operations are $0_{\circ}=-\infty$ for addition and
$1_{\circ}=0$ for multiplication. Observe that such a structure is
not a ring, since not all elements have tropical additive inverses.
For a general background concerning tropical mathematics, see
\cite{SS}.

\bigskip

Concerning meromorphic functions in the tropical setting, and their
elementary Nevanlinna theory, see the recent paper by Halburd and
Southall \cite{HS} as well as \cite{LY2} for certain additional
developments.

\begin{definition}\label{mero} A continuous
piecewise linear function $f:\mathbb{R}\rightarrow\mathbb{R}$ is
said to be tropical meromorphic.
\end{definition}

\textbf{Remarks.} (1) In \cite{HS} and \cite{LY2}, for a continuous
piecewise linear function $f:\mathbb{R}\rightarrow\mathbb{R}$ to be
tropical meromorphic, an additional requirement had been imposed
upon that both one-sided derivatives of $f$ were integers at each
point $x\in\mathbb{R}$. In the present paper, this additional
requirement has been removed. Indeed, the authors are greatful to
Prof. Aimo Hinkkanen for the idea of permitting real slopes in the
definition of tropical meromorphic functions. See also \cite{HS}, p.
900.

\medskip

(2) Observe that whenever $f:\mathbb{R}\rightarrow\mathbb{R}$ is a
continuous piecewise linear function, then the discontinuities of
$f'$, see below, have no limit points in $\mathbb{R}$.

\bigskip

A point $x$ of derivative discontinuity of a tropical meromorphic
function such that
$$\omega_{f}(x):=\lim_{\varepsilon\rightarrow 0+}(f'(x+\varepsilon )-f'(x-\varepsilon ))<0$$
is said to be a pole of $f$ of multiplicity $-\omega_{f}(x)$, while
if $\omega_{f}(x)>0$, then $x$ is called a root (or a zero-point) of
$f$ of multiplicity $\omega_{f}(x)$. Observe that the multiplicity
may be any real number, to be denoted as $\tau_{f}(x)$ in what
follows.

\medskip

The basic notions of the Nevanlinna theory are now easily set up
similarly as in \cite{HS}:

\medskip

The tropical proximity function for tropical meromorphic functions
is defined as
\begin{equation}\label{prox}
 m(r,f):=\frac{1}{2}(f^{+}(r)+f^{+}(-r)).
\end{equation}
Denoting by $n(r,f)$ the number of distinct poles of $f$ in the
interval $(-r,r)$, each pole multiplied by its multiplicity
$\tau_{f}$, the tropical counting function for the poles in $(-r,r)$
is defined as
\begin{equation}\label{count}
 N(r,f):=\frac{1}{2}\int_{0}^{r}n(t,f)dt=\frac{1}{2}\sum_{|b_{\nu}|<r}\tau_{f}(b_{\nu})(r-|b_{\nu}|).
\end{equation}
Defining then the tropical characteristic function $T(r,f)$ as
usual,
\begin{equation}\label{char}
 T(r,f):=m(r,f)+N(r,f),
\end{equation}
the tropical Poisson--Jensen formula, see \cite{HS}, p. 5--6, to be
proved below, readily implies the tropical Jensen formula
\begin{equation}\label{jensen}
T(r,f)-T(r,-f)=f(0)
\end{equation}
as a special case.

\medskip

In this paper, we first recall basic results of Nevanlinna theory
for tropical meromorphic functions, closely relying to what has been
made in \cite{HS} by Halburd and Southall. As a novel element, not
being included in \cite{HS}, we propose a result that might be
called as the tropical second main theorem.

\medskip

Next, for completeness, we recall tropical counterparts of three key
lemmas from Nevanlinna theory, frequently applied to complex
differential and difference equations, namely the Valiron--Mohon'ko
lemma, the Mohon'ko lemma and the Clunie lemma, see e.g.,
respectively, \cite{M1}, p. 83, \cite{C}, Lemma 2, and \cite{MM},
Theorem 6. As for the corresponding results in the tropical setting,
see \cite{LY2}. Indeed, the reader may easily verify that same
proofs as given in \cite{LY2}, carry over to the present situation
word by word.

\medskip

In the final part of the paper, we consider periodic tropical
meromorphic functions, a discrete version of the exponential
function and some ultra-discrete difference equations on the real
line as applications of the tropical Nevanlinna theory.

\section{Poisson--Jensen formula in the tropical setting}

In what follows in this paper, a meromorphic function $f$ is to be
understood in the sense of Definition \ref{mero}, unless otherwise
specified. We may also call $f$ to be restricted meromorphic,
whenever all of its one-sided derivatives (slopes) are integers.

\medskip

The Poisson--Jensen formula in the extended tropical setting is
formally as in the restricted meromorphic case, see \cite{HS}, Lemma
3.1. The same proof applies. For the convenience of the reader,
however, we recall a complete proof here.

\begin{theorem}\label{PJ} Suppose $f$ is a meromorphic function on
$[-r,r]$, for some $r>0$ and denote the distinct zeros, resp. poles,
of $f$ in this interval by $a_{\mu}$, resp. by $b_{\nu}$, with their
corresponding multiplicities $\tau_{f}$ attached. Then for any $x\in
(-r,r)$ we get the Poisson--Jensen formula

$$ f(x)=\frac{1}{2}(f(r)+f(-r))+\frac{x}{2r}(f(r)-f(-r))$$
$$-\frac{1}{2r}\sum_{|a_{\mu}|<r}\tau_{f}(a_{\mu})(r^{2}-|a_{\mu}-x|r-a_{\mu}x)+\frac{1}{2r}\sum_{|b_{\nu}|<r}\tau_{f}(b_{\nu})(r^{2}-|b_{\nu}-x|r-b_{\nu}x).
$$

In the particular case of $x=0$ we obtain the tropical Jensen
formula
$$f(0)=\frac{1}{2}(f(r)+f(-r))-\frac{1}{2}\sum_{|a_{\mu}|<r}\tau_{f}(a_{\mu})(r-|a_{\mu}|)+\frac{1}{2}\sum_{|b_{\nu}|<r}\tau_{f}(b_{\nu})(r-|b_{\nu}|).$$

\end{theorem}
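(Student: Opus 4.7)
The plan is to show that the right-hand side $G(x)$ of the claimed identity, viewed as a piecewise linear function of $x$, agrees with $f(x)$ by matching their slope discontinuities in $(-r,r)$ and their boundary values at $\pm r$. The key observation is that each tent term $-|c-x|$ is piecewise linear with a single kink at $x=c$, where the one-sided slopes jump by $+2$. Since Remark (2) and compactness of $[-r,r]$ guarantee that $f$ has only finitely many zeros $a_\mu$ and poles $b_\nu$ in $(-r,r)$, the sums defining $G$ are finite.

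First I would compute the jump $\omega_G(c)$ for $c\in(-r,r)$. The affine piece $\frac{1}{2}(f(r)+f(-r))+\frac{x}{2r}(f(r)-f(-r))$ contributes no jump. In the zero sum the term at $a_\mu$ equals $\frac{1}{2}\tau_f(a_\mu)|a_\mu-x|-\frac{a_\mu}{2r}\tau_f(a_\mu)(\text{affine})$, which contributes $+\tau_f(a_\mu)$ to the slope jump at $x=a_\mu$; similarly the pole sum contributes $-\tau_f(b_\nu)$ at $x=b_\nu$. Thus $G$ has exactly the same slope discontinuities $\omega_G=\omega_f$ at every point of $(-r,r)$, and is affine elsewhere.

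Next I would evaluate $G$ at the endpoints. For $|a_\mu|<r$ we have $|a_\mu-r|=r-a_\mu$, so $r^{2}-|a_\mu-r|r-a_\mu r=0$; likewise $|a_\mu+r|=r+a_\mu$ gives $r^{2}-|a_\mu+r|r+a_\mu r=0$. The same holds for each $b_\nu$, so the summations in $G(\pm r)$ vanish and a direct computation gives $G(r)=f(r)$ and $G(-r)=f(-r)$.

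Setting $h:=f-G$, the function $h$ is continuous and piecewise linear on $[-r,r]$, its slope has no discontinuities in $(-r,r)$, so $h$ is affine on $[-r,r]$; since $h(-r)=h(r)=0$, we conclude $h\equiv 0$, proving the Poisson--Jensen identity. The Jensen formula then follows by specialising to $x=0$, where the second term disappears and each factor $r^{2}-|c|r$ simplifies to $r(r-|c|)$, cancelling the $1/(2r)$ prefactor to $1/2$. The only nontrivial step is the bookkeeping of the slope jumps and the endpoint cancellation, and I do not expect a genuine obstacle beyond this routine verification.
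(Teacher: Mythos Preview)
Your argument is correct and complete (with one harmless slip: the jump of $-|c-x|$ at $c$ is $-2$, not $+2$; your subsequent computation with the term $+\tfrac{1}{2}\tau_f(a_\mu)|a_\mu-x|$ is however correct, so the conclusion stands). But the route is genuinely different from the paper's.

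The paper proceeds \emph{forward}: it lists the breakpoints $c_{-p}<\cdots<c_0=x<\cdots<c_q$ of $f$ in $(-r,r)$, writes $f(r)-f(x)$ and $f(x)-f(-r)$ as telescoping sums over the consecutive slopes $m_j$, then multiplies the first by $(r+x)$, the second by $(r-x)$, and subtracts. The kernel $r^{2}-|c_j-x|r-c_jx$ falls out of this algebra, so the formula is \emph{derived} rather than verified.

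Your approach is \emph{backward}: you take the right-hand side $G$, compute $\omega_G=\omega_f$ on $(-r,r)$ from the tent functions, check $G(\pm r)=f(\pm r)$ by the endpoint cancellation, and invoke the uniqueness principle that a piecewise linear function on $[-r,r]$ is determined by its $\omega$-profile and its two boundary values (this is essentially Proposition~\ref{prop:2} of the paper plus interpolation at the endpoints). This is cleaner once the formula is in hand and makes the ``tropical identity principle'' explicit and reusable; the paper's computation, on the other hand, explains \emph{why} the particular kernel $r^{2}-|c-x|r-cx$ appears, which your verification takes as given.
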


\begin{proof} As in \cite{HS}, we define an increasing sequence $(c_{j}), j=-p,\ldots
,q$ in $(-r,r)$ in the following way. Let $c_{0}=x$, and let the
other points in this sequence be the points in $(-r,r)$ at which the
derivative of $f$ does not exist, i.e. $f$ has either a zero or a
pole at these points. Further, we denote by $m_{j}$ slopes of the
line segments in the graph of $f$. In particular, we define
$m_{j-1}:=\lim_{x\rightarrow c_{j}^{-}}f'(x)$ for $j=-p,\ldots ,0$,
resp. $m_{j+1}:=\lim_{x\rightarrow c_{j}^{+}}f'(x)$ for $j=0,\ldots
,q$. Elementary geometric observation implies
$$f(r)-f(x)=m_{1}(c_{1}-x)+m_{2}(c_{2}-c_{1})+\cdots +m_{q}(c_{q}-c_{q-1})+m_{q+1}(r-c_{q})$$
$$=-m_{1}x+m_{q+1}r+c_{1}(m_{1}-m_{2})+\cdots +c_{q}(m_{q}-m_{q+1})$$
$$=m_{1}(r-x)-\sum_{j=1}^{q}(m_{j}-m_{j+1})(r-c_{j}).$$
By a parallel reasoning,
$$f(x)-f(-r)=m_{-1}(r+x)-\sum_{j=1}^{p}(m_{-j-1}-m_{-j})(r+c_{-j}).$$
Multiplying the above two equalities by $(r+x)$ and $(r-x)$,
respectively, and subtracting, we obtain
$$2rf(x)=r(f(r)+f(-r))+x(f(r)-f(-r))+(m_{-1}-m_{1})(r^{2}-x^{2})$$
$$+\sum_{j=1}^{p}(m_{-j-1}-m_{-j})(r^{2}-(x-c_{-j})r-c_{-j}x)$$
$$+\sum_{j=1}^{q}(m_{j}-m_{j+1})(r^{2}-(c_{j}-x)r-c_{j}x)=$$
$$=r(f(r)+f(-r))+x(f(r)-f(-r))+\sum_{c_{j}}-\omega_{f}(c_{j})(r^{2}-|c_{j}-x|r-c_{j}x).$$

Recalling the definition of the multiplicity $\tau_{f}$ for roots
and poles of $f$, the claim is an immediate consequence of this
equality.
\end{proof}

\section{Basic Nevanlinna theory in the tropical
setting}\label{Nlinna}

It is easy to verify that several basic inequalities, see \cite{HS},
for the proximity function and the characteristic function hold in
our present setting as well. In particular, the following simple
observations are immediately proved by the corresponding
definitions:

\begin{lemma}\label{elem} (i) If $f\leq g$, then $m(r,f)\leq
m(r,g)$.

\medskip

(ii) Given a real number $\alpha$, then
$$m(r,f^{\otimes\alpha})=m(r,\alpha f)=\alpha m(r,f),$$
$$N(r,f^{\otimes\alpha})=N(r,\alpha f)=\alpha N(r,f),$$
$$T(r,f^{\otimes\alpha})=T(r,\alpha f)=\alpha T(r,f).$$

\medskip

(iii) Given tropical meromorphic functions $f,g$, then
$$m(r,f\otimes g)\leq m(r,f)+m(r,g),$$
$$N(r,f\otimes g)\leq N(r,f)+N(r,g),$$
$$T(r,f\otimes g)\leq T(r,f)+T(r,g).$$
\end{lemma}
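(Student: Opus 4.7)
\textbf{Proof plan for Lemma \ref{elem}.}

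The plan is to reduce each of the three parts to a pointwise observation about $f$, $\alpha f$, or $f+g$, and then sum or average using the definitions \eqref{prox}, \eqref{count}, \eqref{char}. The proximity statements follow from monotonicity, homogeneity and subadditivity of the positive-part map $t\mapsto t^{+}$; the counting statements rely on the elementary identity $\omega_{f+g}(x) = \omega_{f}(x) + \omega_{g}(x)$ together with the positive homogeneity $\omega_{\alpha f}(x) = \alpha\,\omega_{f}(x)$, both of which are immediate from $(f+g)' = f'+g'$ and $(\alpha f)' = \alpha f'$ away from the finitely many jump points in any compact interval.

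For (i), if $f \le g$ pointwise then $f^{+}(\pm r) \le g^{+}(\pm r)$ by monotonicity of $t \mapsto t^{+}$, and averaging yields $m(r,f) \le m(r,g)$ straight from \eqref{prox}. For (ii), since $f^{\otimes\alpha} = \alpha f$ by definition it suffices to treat $\alpha f$; reading the statement for $\alpha \ge 0$ (the only regime in which all right-hand sides remain nonnegative and compatible with the pole/zero convention underlying $N$), one has $(\alpha f)^{+} = \alpha f^{+}$, and at every jump point the multiplicity scales as $\tau_{\alpha f}(x) = \alpha\, \tau_{f}(x)$, with poles and zeros individually preserved. Substituting into \eqref{prox} and \eqref{count} gives the identities for $m$ and $N$, and the $T$ equality is the sum.

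For (iii), the proximity bound follows from the elementary inequality $(a+b)^{+} \le a^{+} + b^{+}$ applied with $a = f(\pm r)$, $b = g(\pm r)$. For the counting bound, the additivity $\omega_{f+g} = \omega_{f} + \omega_{g}$ yields $(-\omega_{f+g}(x))^{+} \le (-\omega_{f}(x))^{+} + (-\omega_{g}(x))^{+}$ at every point; that is, the pole-multiplicity of $f\otimes g$ at any $x$ never exceeds the sum of the pole-multiplicities of $f$ and $g$ there. Summing with the weights $\tfrac{1}{2}(r-|x|)$ over the locally finite pole set of $f+g$ inside $(-r,r)$ and comparing with \eqref{count} for $f$ and $g$ yields $N(r, f\otimes g) \le N(r,f) + N(r,g)$; adding to the proximity bound produces the $T$ inequality. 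The main subtlety — and the source of the inequality rather than an equality — is that a pole of one function may be cancelled, in whole or in part, by a zero of the other, so the pole set of $f+g$ is generally only a subset of the union of the individual pole sets, and one must check that the pointwise $\omega$-inequality above still accounts correctly for such cancellations; the positive-part estimate is exactly what makes this bookkeeping automatic.
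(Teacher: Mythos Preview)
Your argument is correct and is exactly the unpacking of the definitions that the paper has in mind when it says these observations are ``immediately proved by the corresponding definitions''; there is no separate proof in the paper to compare against. Your restriction of part~(ii) to $\alpha\ge 0$ is well taken: the identities $(\alpha f)^{+}=\alpha f^{+}$ and $\tau_{\alpha f}=\alpha\,\tau_f$ (with poles sent to poles) both fail for $\alpha<0$, so the statement as printed should be read under that convention, which is in any case the only way it is used later in the paper (e.g.\ $T(r,g^{\otimes p})=p\,T(r,g)$ with $p\in\mathbb{N}$).
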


\textbf{Remark.} Observe that whenever $f\leq g$, the inequality
$N(r,f)\leq N(r,g)$ is not necessarily true. Similarly, the
inequality
$$N(r,f\oplus g)=N(r,\max (f,g))\leq\max (N(r,f),N(r,g))$$ may fail.
Indeed, as for the case $f\leq g$, take $f,g$ satisfying this
inequality so that the graph of $f$ is constant outside of $[-1,1]$
and is $\wedge\wedge$-shaped in $[-1,1]$, and let $g$ be defined
correspondingly as $\wedge$-shaped. Then $f$ has two poles, while
$g$ has only one. If the slopes are suitably defined, then
$N(r,f)>N(r,g)$. As for the case of $\max (f,g)$, a corresponding
example is easily constructed. The corresponding observations are
true for the characteristic function as well, provided just that the
proximity functions are small enough.

\bigskip

As usual in the Nevanlinna theory, the next step from the
Poisson--Jensen formula is to formulate the first main theorem. To
this end, we recall the notation $L_{f}:=\inf\{ f(b)\}$ over all
poles $b$ of $f$, i.e.
$$L_{f}:=\inf\{ f(b) : \omega_{f}(b)<0\}.$$
In particular, if $f$ has no poles (and so $f$ is said to be
tropical entire), then we have $L_{f}=\inf\emptyset =+\infty$.

\begin{theorem}\label{FMT} Let $f$ be tropical meromorphic. Then
$$T(r,1_{\circ}\oslash (f\oplus a))=T(r,-\max (f,a))\leq T(r,f)+\max (a,0)-\max (f(0),a)$$
for any $a\in\mathbb{R}$ and any $r>0$. Moreover, an asymptotic
equality
$$T(r,1_{\circ}\oslash (f\oplus a))=T(r,-\max (f,a))=T(r,f)-\max (f(0),a)+\varepsilon (r,a)$$
holds for any $r>0$ with $0\leq\varepsilon (r,a)\leq\max (a,0)$,
provided that $-\infty <a<L_{f}$.
\end{theorem}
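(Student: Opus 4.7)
The plan is to reduce the statement to the tropical Jensen formula \eqref{jensen} applied to the auxiliary function $g:=f\oplus a=\max(f,a)$, and then carry out a careful comparison of $m(r,g)$ with $m(r,f)$ and of $N(r,g)$ with $N(r,f)$.

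First, by \eqref{jensen} applied to $g$,
$$T(r,-\max(f,a))=T(r,g)-g(0)=T(r,\max(f,a))-\max(f(0),a),$$
so it suffices to show $T(r,\max(f,a))\leq T(r,f)+\max(a,0)$, with equality up to an error of size at most $\max(a,0)$ once $a<L_{f}$.

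Second, I would handle the proximity function. Writing $a^{+}:=\max(a,0)$ and using $f^{+}\geq 0$, one checks the pointwise identity $\max(f,a)^{+}(x)=\max(f^{+}(x),a^{+})$. From the elementary bounds $A\leq\max(A,B)\leq A+B$ valid for $A,B\geq 0$, this gives
$$0\leq m(r,\max(f,a))-m(r,f)\leq\max(a,0).$$
I would denote this nonnegative quantity by $\varepsilon(r,a)$; it is the remainder term promised in the asymptotic equality.

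Third, I would compare the counting functions by analysing the corners of $\max(f,a)$. A derivative discontinuity of $\max(f,a)$ at a point $c$ arises in exactly two ways: either $c$ is a corner of $f$ with $f(c)>a$ (in which case $\max(f,a)=f$ in a neighbourhood of $c$, so the multiplicity and type at $c$ are those of $f$), or $c$ is a point where $f$ crosses the level $a$, which produces a new upward corner, hence a zero rather than a pole of $\max(f,a)$. Consequently the poles of $\max(f,a)$ are precisely the poles $b$ of $f$ with $f(b)>a$, preserving multiplicities, and therefore $N(r,\max(f,a))\leq N(r,f)$ in general. When $a<L_{f}$, every pole $b$ of $f$ satisfies $f(b)\geq L_{f}>a$, so every pole survives and we get the equality $N(r,\max(f,a))=N(r,f)$.

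Finally, I would combine the three steps. The inequality $m(r,\max(f,a))\leq m(r,f)+\max(a,0)$ together with $N(r,\max(f,a))\leq N(r,f)$ yields $T(r,\max(f,a))\leq T(r,f)+\max(a,0)$, which after subtracting $\max(f(0),a)$ gives the main inequality. Under the additional hypothesis $a<L_{f}$, the counting functions coincide while $m(r,\max(f,a))=m(r,f)+\varepsilon(r,a)$ with $0\leq\varepsilon(r,a)\leq\max(a,0)$, producing the claimed asymptotic identity. The main obstacle in executing this plan is the bookkeeping in Step 3: one has to be sure that no pole of $f$ is accidentally suppressed by the clipping with $a$ and that newly created corners at crossings of the level $a$ are upward (hence do not contribute to $N$), which is precisely where the hypothesis $a<L_{f}$ enters; the remaining steps are direct consequences of the definitions and of the Jensen formula already established in Theorem \ref{PJ}.
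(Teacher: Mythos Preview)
Your proposal is correct and follows essentially the same route as the paper: apply the tropical Jensen formula to $g=f\oplus a$ to reduce the problem to bounding $T(r,\max(f,a))$ from above by $T(r,f)+\max(a,0)$ and, under $a<L_{f}$, from below by $T(r,f)$. The only cosmetic difference is that for the upper bound the paper invokes the ready-made inequality $T(r,g\oplus h)\leq T(r,g)+T(r,h)$ with $h=a$, whereas you unpack this into the pointwise identity $\max(f,a)^{+}=\max(f^{+},a^{+})$ together with the corner analysis giving $N(r,\max(f,a))\leq N(r,f)$; for the lower bound both arguments coincide, resting on $N(r,\max(f,a))=N(r,f)$ when $a<L_{f}$ and the monotonicity of $m$.
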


\begin{proof} Making use of the tropical Jensen formula
(\ref{jensen}), we immediately conclude that
\begin{eqnarray*}
T\bigl(r, 1_0\oslash (f\oplus a)\bigr)&=&T\bigl(r, -\max(f, a)\bigr)\\
&=&T\bigl(r, \max(f, a)\bigr)-\max\bigl(f(0), a\bigr)\\
&\leq& T(r,f)+\max(a,0)-\max\bigl(f(0), a\bigr)
\end{eqnarray*}
for any $a\in \mathbb{R}$ and for any $r>0$. Here we also used the
inequality $T(r, g\oplus h)=T\bigl(r, \max(g,h)\bigr)\leq
T(r,g)+T(r,h)$ and the simple observation that $T(r,a)=\max(a,0)$.
Further,
\[
\max(a,0)-\max\bigl(f(0), a\bigr)= \left\{
\begin{array}{llc}
a-f(0)&\leq 0;\quad a>0, f(0)\geq a\\
a-a& =0; \quad a>0, f(0)< a\\
0-f(0)&\leq |a|; \quad a\leq 0, f(0)\geq a\\
0-a & =|a|; \quad a\leq 0, f(0)< a
\end{array}
\right\} \leq |a|.
\]

\medskip

To obtain the asserted asymptotic equality, suppose first that $f$
has at least one pole and that $-\infty <a<L_{f}$. In this case, we
have $N\bigl(r,\max(f,a)\bigr)=N(r,f)$. Therefore,
\begin{eqnarray*}
T\bigl(r, 1_0\oslash (f\oplus a)\bigr)&=&T\bigl(r, -\max(f, a)\bigr)\\
&=& T\bigl(r,\max(f, a)\bigr)-\max\bigl(f(0), a\bigr)\\
&=&m\bigl(r,\max(f, a)\bigr)+N\bigl(r,\max(f, a)\bigr)-\max\bigl(f(0), a\bigr)\\
&\geq & m(r, f)+N(r, f)-\max\bigl(f(0), a\bigr)\\
&\geq & T(r, f)-\max\bigl(f(0), a\bigr),
\end{eqnarray*}
according to the monotonicity of $m(r, \ast)$, Lemma \ref{elem},
with respect to the second component $\ast$.

\medskip

Finally, if $L_f=+\infty$, that is, if $f$ has no poles, the
asymptotic equality holds as well. In fact, because of
$T(r,f)=m(r,f)$ then and $f\oplus a\geq f$ for any $a\in\mathbb{R}$,
we have
\begin{eqnarray*}
T\bigl(r, 1_0\oslash (f\oplus a)\bigr)&=&T\bigl(r, -\max(f, a)\bigr)\\
&=& T\bigl(r,\max(f, a)\bigr)-\max\bigl(f(0), a\bigr)\\
&\geq&m\bigl(r,\max(f, a)\bigl)-\max\bigl(f(0), a\bigr)\\
&\geq & m(r, f)-\max\bigl(f(0), a\bigr)\\
&=& T(r, f)-\max\bigl(f(0), a\bigr).
\end{eqnarray*}
\end{proof}

\textbf{Example.} As an example, for a non-constant linear function
$f(x)=\alpha x+\beta$ with $\alpha
>0$ and $\beta >0$, say, it immediately follows that
\[
T(r,f)=m(r,f)=\left\{
\begin{array}{cl}
\beta & \bigl(0\leq r<\frac{\beta}{\alpha}\bigr)\\
\frac{\alpha}{2}r+\frac{\beta}{2} & \bigl(\frac{\beta}{\alpha}\leq
r\bigr)
\end{array}
\right.\,.
\]
It is a simple exercise to verify by this example that the error
term $\varepsilon(r,a)$ in Theorem \ref{FMT}, may run over the whole
interval $\biggl[0, \max\bigl(f(0),a\bigr)\biggr)$.

\bigskip

We next proceed to recall

\begin{theorem}\label{mono} The characteristic function $T(r,f)$ is
a positive, continuous, non-decreasing piecewise linear function of
$r$.
\end{theorem}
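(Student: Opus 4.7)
The plan is to verify the four asserted properties—non-negativity, continuity, piecewise linearity, and monotonicity—directly from the formula $T(r,f)=m(r,f)+N(r,f)$, working with the explicit expressions for $m$ and $N$.

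Non-negativity is immediate: $f^+(\pm r)\ge 0$ forces $m(r,f)\ge 0$, and $\tau_f(b_\nu)=-\omega_f(b_\nu)>0$ at every pole forces $N(r,f)\ge 0$. Continuity and piecewise linearity rest on two observations. First, $f^+(x)=\max(f(x),0)$ is itself continuous and piecewise linear in $x$—its corners are those of $f$ together with the transversal zeros of $f$, and they form a locally finite set thanks to Remark (2) after Definition \ref{mero}—so $m(r,f)=\tfrac{1}{2}(f^+(r)+f^+(-r))$ is continuous and piecewise linear in $r$. Second, in $N(r,f)=\tfrac{1}{2}\sum_{|b_\nu|<r}\tau_f(b_\nu)(r-|b_\nu|)$ each summand is linear in $r$ on its range of activity and vanishes as $r\downarrow|b_\nu|$, while only finitely many summands are active on any bounded range of $r$, so $N(r,f)$ is continuous and piecewise linear.

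The real content is monotonicity. I plan to differentiate $T(r,f)$ at a generic $r$—avoiding values where $\pm r$ is a corner of $f^+$ or where $r=|b_\nu|$—so that
\[
T'(r,f)=\tfrac{1}{2}\bigl((f^+)'(r)-(f^+)'(-r)\bigr)+\tfrac{1}{2}\,n(r,f),
\]
with $n(r,f)=\sum_{|b_\nu|<r}\tau_f(b_\nu)$. The first bracket equals $\sum_{c\in(-r,r)}\omega_{f^+}(c)$, the total slope-jump of $f^+$ across the interval. Classifying the corners of $f^+$ by the behaviour of $f$ at $c$ yields three types: (A) corners of $f$ with $f(c)>0$, contributing $\omega_{f^+}(c)=\omega_f(c)$; (B) corners of $f$ with $f(c)=0$, contributing $\omega_{f^+}(c)\ge 0$ by a short case split on the left and right slopes of $f$; and (C) transversal zeros of $f$ (smooth points where $f(c)=0$ and $f'(c)\ne 0$), contributing $\omega_{f^+}(c)=|f'(c)|>0$. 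The only possibly negative contributions come from type-(A) corners that are poles of $f$, each contributing $\omega_f(b_\nu)=-\tau_f(b_\nu)$, whence
\[
(f^+)'(r)-(f^+)'(-r)\;\ge\;-\!\!\!\sum_{\substack{b_\nu\in(-r,r)\\ f(b_\nu)>0}}\!\!\!\tau_f(b_\nu)\;\ge\;-n(r,f),
\]
so $T'(r,f)\ge 0$. Combined with the piecewise linearity and continuity from the previous paragraph, this forces $T(r,f)$ to be non-decreasing. The main obstacle is the slope-jump classification in types (A)--(C); once that is in hand, the rest of the argument is routine bookkeeping around the Fundamental Theorem of Calculus for the piecewise constant derivative $T'(\cdot,f)$.
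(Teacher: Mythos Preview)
Your argument is correct. The paper itself supplies no proof at all, merely stating that the argument in \cite{HS}, p.~894, carries over verbatim; your proposal therefore provides the details the paper omits. The heart of your proof---classifying the corners of $f^{+}$ into types (A), (B), (C) and showing that the only negative slope-jumps of $f^{+}$ in $(-r,r)$ come from poles $b_{\nu}$ of $f$ with $f(b_{\nu})>0$, hence that $(f^{+})'(r)-(f^{+})'(-r)\ge -n(r,f)$---is exactly the mechanism one expects, and your computations for types (B) and (C) check out: at a corner of $f$ with $f(c)=0$ one has $(f^{+})'(c+)=\max(f'(c+),0)$ and $(f^{+})'(c-)=\min(f'(c-),0)$, so $\omega_{f^{+}}(c)\ge 0$ as you claim. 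One cosmetic point: the statement says ``positive'' but what you (correctly) prove is non-negativity, since $T(0,f)=f^{+}(0)$ may vanish; this is the paper's imprecision, not yours.
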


\begin{proof} The proof offered in
\cite{HS}, p. 894, applies verbatim.
\end{proof}

\textbf{Remark.} (1) The counting function $N(r,f)$ is a positive,
continuous, non-decreasing piecewise linear function of $r$ as well.

\medskip

(2) In particular, Theorem \ref{mono} and Remark (1) above imply
that standard Borel type theorems apply for $T(r,f)$ and $N(r,f)$,
see e.g. \cite{HS}, Lemma 3.5.

\medskip

(3) As a remark for further needs, the following estimate, see
\cite{HS}, remains valid in the present setting as well: Indeed, for
all $k>1$,
$$n(r,f)\leq\frac{2}{(k-1)r}N(kr,f).$$
Moreover, given $\varepsilon >0$, $R>0$ and combining this estimate
and a Borel type lemma, we get
$$n(r,f)\leq 4r^{-1}N(r,f)^{1+\varepsilon}$$
for all $r>R$ outside an exceptional set of finite logarithmic
measure, see \cite{HS}, Theorem 3.6.

\medskip

(4) Defining a tropical rational function as a meromorphic function
that has finitely many poles and zeros only, the first estimate in
(3) above may be used to show that a meromorphic function is
rational if and only if $T(r,f)=O(r)$, see \cite{HS}, Theorem 3.4.

\bigskip

Following the usual classical notion, a meromorphic function $f$ is
said to be of finite order of growth, if $T(r,f)\leq r^{\sigma}$ for
some positive number $\sigma$, and for all $r$ sufficiently large.
Of course, this enables us to define the order $\rho (f)$ of a
meromorphic function in the usual way as
$$\rho (f):=\limsup_{r\rightarrow\infty}\frac{\log T(r,f)}{\log r}.$$
In the finite order case, the characteristic function and the
counting function of the shifts of meromorphic functions may be
estimated by applying the following lemma, see \cite{K}, Lemma 3.2:

\begin{lemma}\label{shift} Let $T:[0,+\infty)\rightarrow [0,+\infty
)$ be a non-decreasing continuous function of finite order $\rho$
and take $c\in (0,+\infty )$. Then
$$T(r+c)=T(r)+O(r^{\rho -1+\varepsilon})$$
outside of a set of finite logarithmic measure.
\end{lemma}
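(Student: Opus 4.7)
The plan is to exhibit the exceptional set explicitly and reduce the finiteness of its logarithmic measure to the convergence of a single integral. Fix $\varepsilon>0$ and put
$$E_{\varepsilon} := \left\{\, r \geq 1 : T(r+c) - T(r) > r^{\rho - 1 + \varepsilon}\,\right\}.$$
Outside $E_{\varepsilon}$ the desired estimate $T(r+c) - T(r)=O(r^{\rho-1+\varepsilon})$ is immediate, so it is enough to show that $E_{\varepsilon}$ has finite logarithmic measure. On $E_{\varepsilon}$ we have $1/r = r^{\rho-1+\varepsilon}/r^{\rho+\varepsilon} < (T(r+c)-T(r))/r^{\rho+\varepsilon}$, whence
$$\int_{E_{\varepsilon}} \frac{dr}{r} \;\leq\; \int_1^{\infty} \frac{T(r+c) - T(r)}{r^{\rho + \varepsilon}}\, dr,$$
and the problem reduces to showing that this latter integral converges.

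To bound it, I would apply the substitution $u=r+c$ in the piece involving $T(r+c)$ and rearrange to obtain, for every $R>1+c$,
$$\int_1^R \frac{T(r+c) - T(r)}{r^{\rho+\varepsilon}}\,dr \;=\; \int_{1+c}^{R} T(u) \left[\frac{1}{(u-c)^{\rho+\varepsilon}} - \frac{1}{u^{\rho+\varepsilon}}\right] du + \int_R^{R+c}\frac{T(u)}{(u-c)^{\rho+\varepsilon}}\, du - \int_1^{1+c}\frac{T(u)}{u^{\rho+\varepsilon}}\, du.$$
The Taylor expansion $(u-c)^{-\rho-\varepsilon}-u^{-\rho-\varepsilon}=(\rho+\varepsilon)c\,u^{-\rho-\varepsilon-1}+O(u^{-\rho-\varepsilon-2})$ makes the bracket $O(u^{-\rho-\varepsilon-1})$ as $u\to\infty$. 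Next I would use the finite order hypothesis: for any $\delta>0$, $T(u) \leq u^{\rho+\delta}$ for all sufficiently large $u$. Choosing $\delta\in(0,\varepsilon/2)$ makes the principal integrand $O(u^{-1-\varepsilon/2})$, hence integrable on $[1+c,\infty)$.

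For the boundary terms, the first is controlled by
$$\int_R^{R+c}\frac{T(u)}{(u-c)^{\rho+\varepsilon}}du \;\leq\; \frac{c\,T(R+c)}{R^{\rho+\varepsilon}} \;=\; O\!\left(R^{\delta-\varepsilon}\right)\;\longrightarrow\;0$$
as $R\to\infty$, while the second is a finite constant independent of $R$. Letting $R\to\infty$ therefore gives a finite bound on $\int_{E_\varepsilon}dr/r$, completing the argument.

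The main obstacle is not an idea but a matter of bookkeeping: the auxiliary parameter $\delta$ supplied by the order hypothesis and the exponent $\varepsilon$ defining $E_\varepsilon$ must be coupled so that the principal integral converges and the boundary term at $R$ vanishes simultaneously. The choice $\delta<\varepsilon/2$ handles both, with room to spare. No differentiability of $T$ is needed — only its continuity and monotonicity, which make the substitution $u=r+c$ and the comparisons on $E_\varepsilon$ fully rigorous.
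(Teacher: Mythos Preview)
Your argument is correct. The paper itself gives no proof of this lemma: it simply cites \cite{K}, Lemma~3.2, so there is nothing to compare against directly. Your approach---defining the bad set $E_{\varepsilon}$, bounding its logarithmic measure by the integral $\int_{1}^{\infty}\bigl(T(r+c)-T(r)\bigr)r^{-\rho-\varepsilon}\,dr$, and then controlling that integral via the substitution $u=r+c$ together with the order hypothesis $T(u)\leq u^{\rho+\delta}$---is the standard way this estimate is established and is essentially what one finds in the cited reference.

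One small imprecision: in your bound for the boundary term $\int_{R}^{R+c}T(u)(u-c)^{-\rho-\varepsilon}\,du$ the denominator should be $(R-c)^{\rho+\varepsilon}$ rather than $R^{\rho+\varepsilon}$, since for $u\in[R,R+c]$ one only has $u-c\geq R-c$. This is of course harmless, as $(R-c)^{\rho+\varepsilon}\sim R^{\rho+\varepsilon}$ when $R\to\infty$, and the conclusion $O(R^{\delta-\varepsilon})\to 0$ is unaffected.
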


Moreover, the estimates given in Remark (3) above, may be modified
in the finite order situation as follows, see \cite{HS}, Corollary
3.7:

\begin{lemma}\label{laskuri} Let $f$ be a meromorphic function of
finite order, and suppose that $\delta <1$ and $R>0$. Then
$n(r,f)\leq r^{-\delta}N(r,f)$ for all $r>R$ outside an exceptional
set of finite logarithmic measure.
\end{lemma}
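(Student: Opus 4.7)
The plan is to combine the unconditional Borel-type bound
$$n(r,f)\leq 4\,r^{-1}\,N(r,f)^{1+\varepsilon}$$
from Remark (3) following Theorem \ref{mono} (valid outside a set of finite logarithmic measure for any prescribed $\varepsilon>0$) with the finite-order upper estimate $N(r,f)\leq T(r,f)\leq r^{\rho+\eta}$, which holds for any $\eta>0$ and all sufficiently large $r$ directly from the definition of the order $\rho$ of $f$.

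Concretely, I would split $N(r,f)^{1+\varepsilon}=N(r,f)\cdot N(r,f)^{\varepsilon}$ and apply the finite-order estimate only to the second factor, obtaining $N(r,f)^{\varepsilon}\leq r^{\varepsilon(\rho+\eta)}$. Substituting back gives
$$n(r,f)\leq 4\,r^{-1+\varepsilon(\rho+\eta)}\,N(r,f),$$
valid outside a set of finite logarithmic measure for all sufficiently large $r$.

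The key observation is that $\delta<1$, so $1-\delta>0$, and since $\rho$ is finite I may choose $\varepsilon$ and $\eta$ small enough that $\varepsilon(\rho+\eta)<1-\delta$. Then $-1+\varepsilon(\rho+\eta)+\delta<0$, so the prefactor $4\,r^{-1+\varepsilon(\rho+\eta)+\delta}$ tends to zero and in particular is $\leq 1$ once $r$ exceeds a suitable threshold; enlarging $R$ accordingly yields the claimed inequality $n(r,f)\leq r^{-\delta}N(r,f)$. The degenerate case $N(r,f)=0$ causes no trouble, since then $n(r,f)=0$ automatically. I do not anticipate any genuine obstacle here: the entire argument is essentially bookkeeping once Remark (3) and the definition of finite order are in hand, and the only exceptional set of finite logarithmic measure involved is the one already furnished by Remark (3).
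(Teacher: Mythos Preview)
Your argument is correct: splitting $N(r,f)^{1+\varepsilon}=N(r,f)\cdot N(r,f)^{\varepsilon}$, bounding the second factor by $r^{\varepsilon(\rho+\eta)}$ via the finite-order hypothesis, and then choosing $\varepsilon,\eta$ small enough that $\varepsilon(\rho+\eta)<1-\delta$ does exactly what is needed; the residual interval on which the prefactor $4r^{-1+\varepsilon(\rho+\eta)+\delta}$ has not yet dropped below $1$ has finite logarithmic measure and may be absorbed into the exceptional set.

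As for comparison with the paper: the paper does not supply its own proof of this lemma at all, merely citing \cite{HS}, Corollary~3.7. Your route through the ready-made estimate of Remark~(3) is the natural one given the tools already assembled in the text, and is in the same spirit as the Halburd--Southall argument (which likewise passes through the basic bound $n(r,f)\leq \frac{2}{(k-1)r}N(kr,f)$ and a Borel-type lemma). There is nothing to add.
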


In classical Nevanlinna theory and its applications, the lemma on
logarithmic derivatives plays a fundamental role. It is likely that
its tropical counterpart below, the lemma on tropical quotients of
shifts, may become equally important:

\begin{theorem}\label{LLD} Let $f$ be tropical meromorphic. Then,
for any $\varepsilon >0$,
$$m(r,f(x+c)\oslash f(x))\leq \frac{2^{1+\varepsilon}14|c|}{r}(T(r+|c|,f)^{1+\varepsilon}+o(T(r+|c|,f)))$$
holds outside an exceptional set of finite logarithmic measure.
\end{theorem}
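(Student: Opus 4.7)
The plan is to obtain a pointwise bound for the difference $f(x+c)-f(x)$ valid for all $|x|\leq r$, and then use the elementary fact $m(r,g)\leq\tfrac12(|g(r)|+|g(-r)|)$ (since $g^+\leq |g|$) to deduce the estimate on $m\bigl(r, f(x+c)\oslash f(x)\bigr)$. The pointwise bound itself will come from applying the tropical Poisson--Jensen formula (Theorem \ref{PJ}) with radius $R:=r+|c|$, so that $x$ and $x+c$ both lie in $(-R,R)$ whenever $|x|\leq r$.

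Writing Theorem \ref{PJ} twice, once at $x+c$ and once at $x$, and subtracting, the constant boundary terms cancel, the linear boundary term reduces to $\frac{c}{2R}(f(R)-f(-R))$, and the zero/pole contribution becomes a sum of terms of the form
\begin{equation*}
\frac{\tau_f(\cdot)}{2R}\Bigl(\bigl(|(\cdot)-x-c|-|(\cdot)-x|\bigr)R - (\cdot)\,c\Bigr).
\end{equation*}
Using the elementary Lipschitz inequality $\bigl||y-c|-|y|\bigr|\leq|c|$ together with $|a_\mu|,|b_\nu|<R$, each such summand is bounded in absolute value by $|c|\,\tau_f(\cdot)$. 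Summing over all zeros and poles in $(-R,R)$ produces a contribution not exceeding $|c|\bigl(n(R,f)+n(R,-f)\bigr)$, while the boundary term is estimated via $|f(\pm R)|\leq 2m(R,f)+2m(R,-f)\leq 2T(R,f)+2T(R,-f)$, which in view of the tropical Jensen formula (\ref{jensen}) is $O(T(R,f))$.

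At this stage the pointwise bound reads
\begin{equation*}
|f(x+c)-f(x)|\leq \frac{|c|}{R}\,O\bigl(T(R,f)\bigr)+|c|\bigl(n(R,f)+n(R,-f)\bigr),\quad |x|\leq r,
\end{equation*}
and the remaining task is to convert the counting functions into characteristic functions. This is where the Borel-type estimate from Remark (3) after Theorem \ref{mono} enters: $n(R,f)\leq 4R^{-1}N(R,f)^{1+\varepsilon}$ and similarly for $n(R,-f)$, both valid outside an exceptional set of finite logarithmic measure. Combined with $N(R,\pm f)\leq T(R,\pm f)$ and $T(R,-f)=T(R,f)-f(0)$, and with the inequality $(a+b)^{1+\varepsilon}\leq 2^{\varepsilon}(a^{1+\varepsilon}+b^{1+\varepsilon})$, the pole/zero contribution is absorbed into $\frac{2^{1+\varepsilon}\cdot 8|c|}{R}T(R,f)^{1+\varepsilon}$ plus an $o(T(R,f))$ term coming from $f(0)$. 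Evaluating at $x=\pm r$, averaging, and using $R\geq r$ so that $|c|/R\leq|c|/r$ delivers the claimed inequality with the stated constant.

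The main obstacle is bookkeeping: bundling the boundary contribution, the poles contribution, and the zeros contribution into a single clean bound with the stated numerical constant $2^{1+\varepsilon}\cdot 14$ requires care about the value of $R$, the precise Borel constant, and the factor $2^{\varepsilon}$ from splitting $(N(R,f)+N(R,-f))^{1+\varepsilon}$. A secondary (but routine) subtlety is that the Borel-type estimate is applied twice (once for $f$ and once for $-f$), so the corresponding exceptional sets must be combined, which preserves finite logarithmic measure.
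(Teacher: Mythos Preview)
Your approach is correct and coincides with the paper's: the paper simply invokes the proof of Lemma~3.8 in Halburd--Southall, and from the intermediate inequality quoted in the proof of Lemma~\ref{dlog} one sees that this argument proceeds exactly as you outline --- subtract the Poisson--Jensen representations at $x+c$ and $x$ with a radius $\rho\geq r+|c|$ to obtain
\[
m\bigl(r, f(x+c)\oslash f(x)\bigr)\leq |c|\left\{\frac{m(\rho,f)+m(\rho,-f)}{\rho}+\frac{3}{2}\bigl(n(\rho,f)+n(\rho,-f)\bigr)\right\},
\]
and then convert $n(\rho,\pm f)$ to $N(\rho,\pm f)^{1+\varepsilon}$ via the Borel estimate of Remark~(3) after Theorem~\ref{mono}. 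One minor point: with your choice $R=r+|c|$ the shifted argument $x+c$ can land exactly on the boundary $\pm R$, so strictly speaking Theorem~\ref{PJ} requires $\rho>r+|c|$; this is harmless but explains why the reference argument carries the extra elbow room.
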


\begin{proof} The proof given for Lemma 3.8 in \cite{HS}, see p. 897--898,
applies word by word.
\end{proof}

Another version of the lemma on tropical quotients of shifts is a
tropical counterpart of a discussion in \cite{HKT}:

\begin{lemma} \label{dlog}
Let $f$ be tropical meromorphic. Then for all $\alpha >1$ and $r>0$,
    \begin{equation*}
    m\bigl(r,f(x+c)\oslash f(x)\bigr) \leq
\frac{12|c|/(\alpha-1)}{r+|c|}\left\{T\big(\alpha(r+|c|),f\big)+f(0)/2\right\}\,.
    \end{equation*}
\end{lemma}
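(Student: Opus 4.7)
The plan is to reduce the estimate to controlling $|f(y+c)-f(y)|$ uniformly for $y\in[-r,r]$, since
\[
m\bigl(r,f(x+c)\oslash f(x)\bigr)=\tfrac{1}{2}\bigl(g^{+}(r)+g^{+}(-r)\bigr)\leq\tfrac{1}{2}\bigl(|g(r)|+|g(-r)|\bigr),
\]
where $g(y):=f(y+c)-f(y)$. To produce such a uniform bound, I would apply the Poisson--Jensen formula from Theorem~\ref{PJ} on the interval $[-R,R]$ with $R=r+|c|$ (inflating $R$ by a vanishing cushion $\varepsilon>0$ if necessary to remain strictly inside the open interval required there), writing out the representation at both $y$ and $y+c$ and subtracting. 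The quadratic terms in the evaluation point cancel, leaving a linear boundary contribution $\tfrac{c}{2R}\bigl(f(R)-f(-R)\bigr)$ together with one expression per zero $a_{\mu}$ and per pole $b_{\nu}$, each of the form $R\bigl(|\xi-y|-|\xi-y-c|\bigr)-\xi c$ for $\xi\in\{a_{\mu},b_{\nu}\}$.

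The key geometric step is the reverse triangle inequality $\bigl||\xi-y|-|\xi-y-c|\bigr|\leq|c|$, which together with $|\xi|<R$ bounds the absolute value of each such expression by $2R|c|$. Summing against the multiplicities $\tau_{f}$ produces
\[
|f(y+c)-f(y)|\leq\tfrac{|c|}{2R}\bigl(|f(R)|+|f(-R)|\bigr)+|c|\bigl(n(R,f)+n(R,-f)\bigr).
\]
I would then control the boundary piece via $|f(R)|+|f(-R)|=2m(R,f)+2m(R,-f)$ and the Jensen identity (\ref{jensen}), giving $m(R,f)+m(R,-f)\leq T(R,f)+T(R,-f)=2T(R,f)-f(0)$. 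For the counting piece I would invoke the estimate $n(R,f)\leq\tfrac{2}{(\alpha-1)R}N(\alpha R,f)$ recorded in Remark~(3) of Theorem~\ref{mono}, combined with $N\leq T$ and the same Jensen identity applied to $-f$, to obtain $n(R,f)+n(R,-f)\leq\tfrac{2(2T(\alpha R,f)-f(0))}{(\alpha-1)R}$.

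Finally, setting $R=r+|c|$ and using the monotonicity $T(R,f)\leq T(\alpha R,f)$, the two estimates combine into a bound of the asserted shape
\[
m\bigl(r,f(x+c)\oslash f(x)\bigr)\leq\tfrac{C|c|}{(\alpha-1)(r+|c|)}\bigl(T(\alpha(r+|c|),f)+C'f(0)\bigr),
\]
with explicit numerical constants $C,C'$. I do not anticipate a conceptual obstacle: the overall scheme is forced by the Poisson--Jensen representation and mirrors the classical argument from Halburd--Korhonen--Tohge referenced in the statement. The work lies in the accounting---choosing carefully at which step to pass from $m$ to $T$ and from $n$ to $T$, and whether to estimate $|g(r)|$ and $|g(-r)|$ individually or average them, in order to reach the specific numerical constant $12/(\alpha-1)$ and the stated form of the correction term involving $f(0)$.
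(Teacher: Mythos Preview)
Your plan is correct and follows essentially the same route as the paper. The paper quotes the intermediate inequality
\[
m\bigl(r, f(x+c)\oslash f(x)\bigr)\leq |c|\Bigl\{\tfrac{m(\rho,f)+m(\rho,-f)}{\rho}+\tfrac32\bigl(n(\rho,f)+n(\rho,-f)\bigr)\Bigr\}
\]
from \cite{HS} (itself obtained by the very Poisson--Jensen subtraction you sketch), chooses the intermediate radius $\rho=\tfrac{\alpha+1}{2}(r+|c|)$, and then converts $n(\rho,\pm f)$ to $N\bigl(\alpha(r+|c|),\pm f\bigr)$ by the obvious integral estimate; you instead re-derive the inequality at $R=r+|c|$ and invoke Remark~(3) for the $n\to N$ step, but these are cosmetic variations of the same argument.
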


\begin{proof} Following \cite{HS} as in the proof of their Lemma~3.8, by
taking $\rho=(\alpha+1)(r+|c|)/2$ so that
$\rho-r-|c|=(\alpha-1)(r+|c|)/2$ and $\rho>r+|c|$, we have
$$
m\bigl(r, f(x+c)\oslash f(x)\bigr)\leq |c|\left\{\frac{\bigl(m(\rho,
f)+m(\rho,-f)\bigr)}{\rho}+\frac{3}{2}\bigl(n(\rho,f)+n(\rho,-f)\bigr)\right\}
$$
for $x\in [-r,r]$. Since
\begin{eqnarray*}
N\bigl(\alpha(r+|c|), \pm f\bigr)
&\geq & \frac{1}{2}\int_{(\alpha+1)(r+|c|)/2}^{\alpha(r+|c|)} n(t,\pm f)dt\\
&\geq & \frac{1}{2}n(\rho, \pm f )\frac{\alpha-1}{2}(r+|c|)\,,
\end{eqnarray*}
we get
$$
n(\rho, \pm f)\leq \frac{4}{\alpha-1}\frac{1}{r+|c|}
N\bigl(\alpha(r+|c|), \pm f\bigr) \leq
\frac{2}{\rho}\frac{\alpha+1}{\alpha-1}N\bigl(\alpha(r+|c|), \pm
f\bigr)\,,
$$
and therefore the desired estimate:
$$
m\bigl(r, f(x+c)\oslash f(x)\bigr)\leq
\frac{3|c|(\alpha+1)/(\alpha-1)}{\rho} \left\{2T(\rho,
f)+f(0)\right\}\,.
$$
\end{proof}

\begin{corollary}\label{LLDf} Let $f$ be a meromorphic function of
finite order $\rho$. Given $\varepsilon >0$, $f$ satisfies
$$m(r,f(x+c)\oslash f(x))=O(r^{\rho -1+\varepsilon})$$
outside an exceptional set of finite logarithmic measure.
\end{corollary}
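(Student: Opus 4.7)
The plan is to feed the finite-order growth bound $T(r,f)\leq r^{\rho+\eta}$ (valid for every $\eta>0$ and every sufficiently large $r$, by the very definition of $\rho$) into the estimate provided by Theorem \ref{LLD}, and then tidy up the resulting exponents.

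Fix $\varepsilon>0$ and choose auxiliary positive constants $\varepsilon'$ and $\eta$ so small that $(\rho+\eta)(1+\varepsilon')\leq \rho+\varepsilon$; this is possible since the left-hand side tends to $\rho$ as $\varepsilon',\eta\to 0^{+}$ (for instance take $\eta=\varepsilon/3$ and then $\varepsilon'$ small compared with $\varepsilon/(\rho+1)$). Applying Theorem \ref{LLD} with this $\varepsilon'$ gives
$$m\bigl(r, f(x+c)\oslash f(x)\bigr)\leq \frac{2^{1+\varepsilon'}\cdot 14|c|}{r}\Bigl(T(r+|c|,f)^{1+\varepsilon'}+o\bigl(T(r+|c|,f)\bigr)\Bigr)$$
outside an exceptional set $E$ of finite logarithmic measure. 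For $r\geq |c|$ large enough, the order estimate yields $T(r+|c|,f)\leq (r+|c|)^{\rho+\eta}\leq (2r)^{\rho+\eta}$; substituting this bound and using the chosen relation $(\rho+\eta)(1+\varepsilon')\leq \rho+\varepsilon$ produces
$$m\bigl(r,f(x+c)\oslash f(x)\bigr)\leq C\, r^{-1}\cdot r^{(\rho+\eta)(1+\varepsilon')}=O\bigl(r^{\rho-1+\varepsilon}\bigr),$$
still outside $E$, which is precisely the claimed estimate.

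There is no genuine obstacle beyond the routine epsilon bookkeeping; the corollary is essentially a direct read-off from Theorem \ref{LLD} combined with the definition of finite order. One could alternatively invoke Lemma \ref{dlog} with any fixed $\alpha>1$ (say $\alpha=2$), bypassing the $(1+\varepsilon')$-power altogether and reaching the same $O(r^{\rho-1+\varepsilon})$ bound; Theorem \ref{LLD} is nevertheless the more natural vehicle here since the corollary already permits an exceptional set of finite logarithmic measure.
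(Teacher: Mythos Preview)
Your proof is correct and follows the approach the paper itself implicitly intends: the corollary is stated immediately after Theorem~\ref{LLD} and Lemma~\ref{dlog} without a separate proof, so it is meant to be a direct consequence of plugging the finite-order bound $T(r,f)\leq r^{\rho+\eta}$ into Theorem~\ref{LLD}, exactly as you do. Your observation that Lemma~\ref{dlog} with a fixed $\alpha>1$ would in fact give the same bound \emph{without} any exceptional set is a nice bonus, slightly sharper than what the corollary actually claims.
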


\section{Tropical meromorphic functions of hyper-order less than one}

As pointed out in \cite{HKT}, a number of results in the difference
variant of the Nevanlinna theory, see \cite{HK}, typically expressed
for meromorphic functions of finite order, may also be formulated
for meromorphic functions of hyper-order less than one. This
extension applies to the tropical meromorphic setting as well. To
this end, first recall the definition of hyper-order
\begin{equation}\label{hyper}
\rho_{2}(f):=\limsup_{r\rightarrow\infty}\frac{\log\log T(r,f)}{\log
r}.
\end{equation}
Next recall the following lemma from \cite{HKT}, corresponding, in
the case of hyper-order less than one, to our previous Lemma 3.4:

\begin{lemma}\label{technical}
Let $T:[0,+\infty)\to[0,+\infty)$ be a non-decreasing continuous
function and let $s\in(0,\infty)$. If the hyper-order of $T$ is less
than one, i.e.,
    \begin{equation}\label{assu}
    \limsup_{r\to\infty}\frac{\log\log T(r)}{\log r}=\rho_{2}<1
    \end{equation}
and $\delta\in(0,1-\rho_{2})$ then
   \begin{equation}\label{concl}
    T(r+s) = T(r)+ o\left(\frac{T(r)}{r^{\delta}}\right)
    \end{equation}
where $r$ runs to infinity outside of a set of finite logarithmic
measure.
\end{lemma}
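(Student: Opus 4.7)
The plan is to reduce the hyper-order hypothesis on $T$ to an ordinary finite-order hypothesis on $\log T$, after which Lemma~\ref{shift} applies directly. First I would dispose of the trivial case where $T$ stays bounded: then $T$ has a finite limit at infinity, so $T(r+s)-T(r)\to 0$ even without any exceptional set, and the claim is immediate. So assume from now on that $T(r)\to\infty$ as $r\to\infty$.

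Next, set $U(r):=\log T(r)$ on $[r_{0},\infty)$ for $r_{0}$ large enough that $T(r)>1$. Then $U$ is non-decreasing, continuous, and non-negative, and its ordinary order coincides with the hyper-order of $T$, because
\[
\limsup_{r\to\infty}\frac{\log U(r)}{\log r}=\limsup_{r\to\infty}\frac{\log\log T(r)}{\log r}=\rho_{2}<1.
\]
Thus Lemma~\ref{shift} is applicable to $U$ with exponent $\rho_{2}$: for any $\varepsilon>0$,
\[
U(r+s)-U(r)=O\bigl(r^{\rho_{2}-1+\varepsilon}\bigr)
\]
holds outside an exceptional set of finite logarithmic measure.

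Exponentiating gives $\log\bigl(T(r+s)/T(r)\bigr)=O(r^{\rho_{2}-1+\varepsilon})$, which tends to zero as soon as $\varepsilon<1-\rho_{2}$. Combined with the monotonicity $T(r+s)\geq T(r)$ and the elementary bound $e^{x}-1\leq xe^{x}$ for $x\geq 0$, this yields
\[
\frac{T(r+s)}{T(r)}-1=O\bigl(r^{\rho_{2}-1+\varepsilon}\bigr).
\]
Now I would fix a single $\varepsilon>0$ so small that $\varepsilon+\delta<1-\rho_{2}$, which is possible precisely because $\delta\in(0,1-\rho_{2})$. With this choice the exponent $\rho_{2}-1+\varepsilon+\delta$ is strictly negative, hence
\[
T(r+s)-T(r)=\frac{T(r)}{r^{\delta}}\cdot O\bigl(r^{\rho_{2}-1+\varepsilon+\delta}\bigr)=o\!\left(\frac{T(r)}{r^{\delta}}\right),
\]
outside the exceptional set produced by Lemma~\ref{shift}.

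I do not anticipate a substantive obstacle: the argument is a clean reduction to the previously quoted Lemma~\ref{shift} via the substitution $U=\log T$. The only point that warrants a little care is that the parameter $\varepsilon$ must be chosen \emph{once and for all} at the end, so that the exceptional set is fixed and the $o(\cdot)$ conclusion holds outside a \emph{single} set of finite logarithmic measure, as asserted. A minor bookkeeping issue is that $U=\log T$ is only defined on $[r_{0},\infty)$; adjoining the bounded interval $[0,r_{0}]$ to the exceptional set does not affect its logarithmic measure, so this is harmless.
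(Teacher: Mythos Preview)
Your argument is correct. The paper itself does not supply a proof of this lemma at all: immediately after the statement it simply writes ``For a proof of this lemma, see \cite{HKT}.'' So there is no in-paper proof to compare against, and your reduction---passing from $T$ to $U=\log T$ so that the hyper-order condition on $T$ becomes an ordinary finite-order condition on $U$, then invoking Lemma~\ref{shift} and exponentiating back---is a perfectly good self-contained replacement. In fact it has the virtue of staying entirely within the toolkit already assembled in the paper.

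One cosmetic remark: your closing sentence that ``adjoining the bounded interval $[0,r_{0}]$ to the exceptional set does not affect its logarithmic measure'' is not literally true, since $\int_{0}^{r_{0}}dt/t$ diverges at the origin. What you mean (and what is standard) is that exceptional sets of finite logarithmic measure are only constrained near infinity, so one either works on $[1,\infty)$ from the outset or extends $U$ continuously to $[0,r_{0})$ by a constant before applying Lemma~\ref{shift}. Either fix is immediate and does not touch the substance of your proof.
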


For a proof of this lemma, see \cite{HKT}.

\medskip

As a counterpart to Corollary \ref{LLDf}, we may state the following

\begin{proposition}\label{LLDh} Let $f$ be a meromorphic function of
hyper-order $\rho_{2}<1$. Given $\tau >1-\rho_{2}$, and fixing
$\delta\in (0,1-\rho_{2})$, then $f$ satisfies
$$m(r,f(x+c)\oslash f(x))\leq\frac{13|c|}{r^{\tau}}T(r,f)=o(T(r,f)/r^{\delta}),$$
as $r$ approaches to infinity outside of a set of finite logarithmic
measure.
\end{proposition}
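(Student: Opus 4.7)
My approach parallels Corollary \ref{LLDf}, replacing the finite-order shift estimate (Lemma \ref{shift}) by the hyper-order one (Lemma \ref{technical}). The starting point is Lemma \ref{dlog}, which gives, for any $\alpha > 1$,
\[
m(r, f(x+c)\oslash f(x)) \leq \frac{12|c|}{(\alpha-1)(r+|c|)} \bigl\{T(\alpha(r+|c|), f) + f(0)/2\bigr\}.
\]
The plan is to choose $\alpha = \alpha(r)$ so that the prefactor is of order $|c|/r^{\tau}$ and, simultaneously, the shifted characteristic $T(\alpha(r+|c|), f)$ remains comparable to $T(r, f)$ outside a thin exceptional set.

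First I would set $\alpha = 1 + r^{\tau-1}$, so that $(\alpha-1)(r+|c|) = r^{\tau}(1 + |c|/r) \geq r^{\tau}$ for all sufficiently large $r$, giving the prefactor bound $\tfrac{12|c|}{r^{\tau}(1+|c|/r)}$, which in turn is at most $\tfrac{13|c|}{r^{\tau}}$ once $r$ is large. The induced shift $\alpha(r+|c|) - r = |c| + r^{\tau}(1+|c|/r)$ is of order $r^{\tau}$. I would then invoke Lemma \ref{technical} to conclude $T(\alpha(r+|c|), f) = T(r, f)(1 + o(1))$ as $r$ tends to infinity outside a set of finite logarithmic measure; combining the two estimates and absorbing the $1+o(1)$ factor into the passage from the constant $12$ to $13$, one obtains the claimed bound. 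The identification $\tfrac{13|c|}{r^{\tau}}T(r,f) = o(T(r,f)/r^{\delta})$ is immediate from $\tau > 1 - \rho_{2} > \delta$.

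The main obstacle is precisely the control of $T(\alpha(r+|c|), f)$ in the previous step. Lemma \ref{technical} is phrased for a fixed shift $s \in (0, \infty)$, whereas the effective shift here grows like $r^{\tau}$. Handling shifts of this size is exactly the delicate point where the hyper-order hypothesis $\rho_{2} < 1$ and the flexibility in choosing $\delta < 1 - \rho_{2}$ become indispensable, and where one must appeal to the more quantitative form of the shift lemma established in \cite{HKT}; the rest of the argument is bookkeeping on the parameter $\alpha$.
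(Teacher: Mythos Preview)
Your outline is on the right track, and you correctly isolate the crux: with $\alpha=1+r^{\tau-1}$ the effective shift $\alpha(r+|c|)-r$ grows like $r^{\tau}$, so Lemma~\ref{technical} (stated for a fixed shift $s$) does not apply as written. You acknowledge this and defer to ``the more quantitative form of the shift lemma in \cite{HKT}'', but that deferred step is the entire substance of the proof, not bookkeeping; as it stands the argument has a genuine gap exactly where you say it does.

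The paper closes this gap by a different route. Rather than fixing $\alpha$ as a function of $r$ and then trying to control the shifted characteristic afterward, it lets $\alpha$ depend on $T$ itself via the generalized Borel lemma of Cherry--Ye \cite{CYe}, Lemma~3.3.1. Taking $\phi(s)=s$ and $\xi(x)=(\log x)^{1+\varepsilon}$, one sets
\[
\alpha = 1 + \frac{1}{\bigl(\log T(r+|c|,f)\bigr)^{1+\varepsilon}},
\]
and the Borel lemma delivers $T\bigl(\alpha(r+|c|),f\bigr)\leq C\,T(r+|c|,f)$ outside a set of finite logarithmic measure, with no further work required. Substituting this $\alpha$ into Lemma~\ref{dlog} yields
\[
m\bigl(r,f(x+c)\oslash f(x)\bigr)\leq \frac{12|c|\bigl(\log T(r+|c|,f)\bigr)^{1+\varepsilon}}{r+|c|}\bigl\{C\,T(r+|c|,f)+f(0)/2\bigr\},
\]
after which the hyper-order hypothesis converts the logarithmic factor into the desired negative power of $r$. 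Lemma~\ref{technical} is invoked only at the very last step, to replace $T(r+|c|,f)$ by $T(r,f)$, which is a bounded shift. The key idea you are missing is that letting $\alpha-1$ adapt to the size of $T$ makes the growth control automatic; the Borel lemma is precisely the device that legitimizes this adaptive choice.
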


\begin{proof} For the convenience of the reader, we give a complete proof
here, following an idea from Halburd and Korhonen [4]. First recall
a generalized Borel Lemma as given in \cite{CYe}, Lemma~3.3.1: Let
$\xi(x)$ and $\phi(s)$ be positive, nondecreasing and continuous
functions defined for all sufficiently large $x$ and $s$,
respectively, and let $C>1$. Then we have
    \begin{equation}\label{cy}
    T\left(s+\frac{\phi(s)}{\xi(T(s,f))},f\right) \leq C\,T(s,f)
    \end{equation}
for all $s$ outside of a set $E$ satisfying
    \begin{equation}\label{E}
    \int_{E\cap [s_0,R]} \frac{ds}{\phi(s)} \leq  \frac{1}{\log
     C}\int_e^{T(R,f)}\frac{dx}{x\xi(x)} +O(1)
    \end{equation}
where $R<\infty$. Since $f$ is of infinite order and of hyper-order
less than $1$, then by choosing $\phi(r)= r$, $\xi(x)=(\log
x)^{1+\varepsilon}$ for $\varepsilon>0$ and
    \begin{equation*}
    \alpha= 1+\frac{\phi(r+|c|)}{(r+|c|)\xi(T(r+|c|,f))},
    \end{equation*}
in the above estimate, it follows that
    \begin{equation}\label{m3}
    m\bigl(r, f(x+c)\oslash f(z) \bigr) \leq
    \frac{12|c|\bigl(\log T(r+|c|,f)\bigr)^{1+\varepsilon}} {r+|c|} \bigl\{C T(r+|c|,f) +f(0)/2 \bigr\}
    \end{equation}
as $r$ approaches infinity outside of an $r$-set of finite
logarithmic measure.

Taking now $\varepsilon$ small enough to satisfy
$(\rho_{2}+\varepsilon)(1+\varepsilon)<1$, we see that $\bigl(\log
T(r+|c|,f)\bigr)^{1+\varepsilon}/(r+|c|)=o(1)(r+|c|)^{-\tau}$ with
$\tau:=(1-\rho_{2}+\varepsilon)(1+\varepsilon)>0$ for all
sufficiently large $r$. Then Lemma~\ref{technical} and the above
estimate show that
$$
m\bigl(r,f(x+c)\oslash f(z)\bigr) \leq \frac{13|c|}{r^{\tau}}T(r,f)
=o(T(r,f)/r^{\delta})
$$
holds as $r$ approaches infinity outside of a set of finite
logarithmic measure.
\end{proof}

A `tropical exponential' function $e_{\alpha}(x)$ is found as a
solution to equation $y(x+1)=y(x)^{\otimes \alpha}$, see Section
\ref{tropexp} below for its definition and basic properties. This
function may be used to point out that the condition $\rho_{2}(f)<1$
cannot be dropped in general. In fact, we have for $\alpha>1$,
$$
m\bigl(r,e_{\alpha}(z+1)\oslash e_{\alpha}(z)\bigr)= (\alpha-1)T(r,
e_{\alpha})
$$
on the whole $\mathbb{R}$. Of course, Lemma~\ref{dlog} remains true
for $f(x)=e_{\alpha}(x)$ as well.

\section{Second main theorem in the tropical setting}

In this section, we offer a tropical counterpart to the second main
theorem. Observe, however, that the second main theorem in the
tropical setting may not be as complete as in the usual Nevanlinna
theory. This is due to the fact that certain elementary inequalities
in the classical Nevanlinna theory, in particular those for the
counting function, may fail in the tropical theory.

\begin{theorem} \label{thm:second}
Let $f$ be a tropical meromorphic function and put $L_f:=\inf\{ f(b)
\, : \, \omega_f(b)<0 \}$. Given $c>0$, $q\in\mathbb{N}$ and $q$
distinct values $a_j\in\mathbb{R}$ $(1\leq j\leq q)$ that satisfy
$\max(a_1, \ldots, a_q)< L_f$, then
\begin{multline}\label{secondest}
q T(r,f) \leq
\sum_{j=1}^q N\bigl(r, 1_{\circ}\oslash (f\oplus a_j)\bigr)+T\bigl(r, f(x+c)\bigr) \\
- N\bigl(r, 1_{\circ}\oslash f(x+c)\bigr) + m\bigl(r,f(x+c)\oslash f(x)\bigr) \\
- f(c) +(2q-1)\max_{1\leq j\leq q} \max(a_j, 0) + \sum_{j=1}^q \max
\bigl(f(0), a_j\bigr)
\end{multline}
holds all $r>0$.
\end{theorem}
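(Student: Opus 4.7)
The approach follows the Halburd--Korhonen difference second main theorem, translated into the tropical setting. The three ingredients are the first main theorem (Theorem~\ref{FMT}) applied at each small target $a_j$, a tautological tropical product factorization that introduces the shift $f(x+c)$, and the tropical Jensen formula~\eqref{jensen} applied to that shift.

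First I would invoke Theorem~\ref{FMT}. Since each $a_j\le\max_k a_k<L_f$, its asymptotic-equality form applies, and rearranging gives $T(r,f)\le m_j+N_j+\max(f(0),a_j)$, where $m_j$ and $N_j$ denote the proximity and counting functions of $1_{\circ}\oslash(f\oplus a_j)$. Summing over $j=1,\ldots,q$ produces
$$
qT(r,f)\;\le\;\sum_{j=1}^q m_j+\sum_{j=1}^q N_j+\sum_{j=1}^q\max(f(0),a_j),
$$
which already supplies the $\sum_j N_j$ and $\sum_j\max(f(0),a_j)$ contributions on the right-hand side of~\eqref{secondest}. It remains to control $\sum_j m_j$ by the shift quantities. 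For this I would use the tautological factorization
$$
1_{\circ}\oslash\bigl(f(x)\oplus a_j\bigr)\;=\;\bigl(f(x+c)\oslash(f(x)\oplus a_j)\bigr)\otimes\bigl(1_{\circ}\oslash f(x+c)\bigr),
$$
that is, $-\max(f(x),a_j)=\bigl(f(x+c)-\max(f(x),a_j)\bigr)+\bigl(-f(x+c)\bigr)$. The subadditivity of $m(r,\cdot)$ under tropical multiplication (Lemma~\ref{elem}(iii)) combined with the monotonicity of Lemma~\ref{elem}(i) applied to the pointwise bound $f(x+c)-\max(f(x),a_j)\le f(x+c)-f(x)$ then yields $m_j\le m(r,f(x+c)\oslash f(x))+m(r,1_{\circ}\oslash f(x+c))$. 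Jensen's formula~\eqref{jensen} applied to $g(x):=f(x+c)$, whose value at $0$ is $f(c)$, rewrites $m(r,1_{\circ}\oslash f(x+c))=T(r,f(x+c))-N(r,1_{\circ}\oslash f(x+c))-f(c)$, which is exactly the $T-N-f(c)$ block in~\eqref{secondest}.

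The principal obstacle is the final bookkeeping of additive constants: summing the per-$j$ bound naively puts a factor of $q$ in front of both shift proximities, whereas~\eqref{secondest} carries coefficient~$1$. Removing the extra $(q-1)$ copies requires a pointwise analysis at the sample points $x=\pm r$. The positive part $(-\max(f(x),a_j))^+$ is nontrivial only when both $a_j$ and $f(\pm r)$ are negative, so one can compare $\sum_j(-\max(f(\pm r),a_j))^+$ to a single copy of $(-f(\pm r+c))^++(f(\pm r+c)-f(\pm r))^+$ up to an additive error controlled by $\max_j\max(a_j,0)$ per index. Collecting this per-index slack together with the $\sum_j\varepsilon(r,a_j)\le q\max_j\max(a_j,0)$ left over from Theorem~\ref{FMT} is what should account for the constant $(2q-1)\max_j\max(a_j,0)$ on the right-hand side of~\eqref{secondest}.
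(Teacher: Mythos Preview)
Your approach diverges from the paper's, and the final ``pointwise collapse'' is where it breaks. After summing the per-$j$ first main theorem you must bound $\sum_{j=1}^q m\bigl(r,1_\circ\oslash(f\oplus a_j)\bigr)$ by a \emph{single} copy of $m\bigl(r,1_\circ\oslash f(x+c)\bigr)+m\bigl(r,f(x+c)\oslash f(x)\bigr)$ plus the constant $(2q-1)\max_j\max(a_j,0)$. When all $a_j<0$ that constant vanishes, yet the sum of proximities need not collapse: for instance with $q=3$, $a_j=-j$ and $f\equiv-4$ one has $\sum_j m_j=1+2+3=6$ while the shift block equals $m(r,4)+m(r,0)=4$. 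No $\max(a_j,0)$-type slack is available to absorb the defect $\sum_j|a_j|-\max_j|a_j|$ that appears whenever the targets are negative, so the hand-waved per-index error bound in your last paragraph cannot hold.

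The paper sidesteps this by never forming $\sum_j m_j$. It works instead with the single tropical product $F:=\bigotimes_{k}(f\oplus a_k)=\sum_k\max(f,a_k)$: Jensen for $F$ (Lemma~\ref{lemma3}) gives $T(r,F)=m(r,-F)+N(r,-F)+F(0)$; Lemma~\ref{lemma7} supplies the lower bound $T(r,F)\ge qT(r,f)-q\max_k\max(a_k,0)$ (this, not the per-$j$ first main theorem, is where the hypothesis $\max_j a_j<L_f$ enters), replacing your summation step; Lemma~\ref{lemma4} gives $N(r,-F)\le\sum_j N_j$; and Lemmas~\ref{lemma1} and~\ref{lemma2}, resting on the tropical partial-fraction inequality of Lemma~\ref{prod-sum}, bound the \emph{single} proximity $m(r,-F)$ by the shift block plus $(q-1)\max_k\max(a_k,0)$. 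The coefficient~$1$ on the shift terms thus falls out automatically, with no collapse argument needed.
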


Before proceeding to prove Theorem \ref{thm:second}, we define
\begin{equation}\label{mult}
N_{1}(r,f):=N\bigl(r,1_{\circ}\oslash
f(x+c)\bigr)+2N(r,f)-N\bigl(r,f(x+c)\bigr).
\end{equation}
Clearly, (\ref{mult}) is a tropical counterpart to the classical
counting function
$$N_{1}(r,f):=N(r,1/f')+2N(r,f)-N(r,f')$$
for multiple values of $f$ in the second main theorem for usual
meromorphic functions. Using (\ref{mult}), we may write
(\ref{secondest}) as
\begin{multline}
q T(r,f)-T\bigl(r, f(x+c)\bigr) \leq \sum_{j=1}^q N\bigl(r,
1_{\circ}\oslash (f\oplus a_j)\bigr) \\- N_{1}(r,f) +2 N(r,f)
-N\bigl(r,f(x+c)\bigr) + m\bigl(r,f(x+c)\oslash f(x)\bigr) +O(1).
\end{multline}
Suppose now that $f$ is of hyper-order $\rho_{2}<1$. Applying
Lemma~\ref{technical} to $T(r,f)$ and $N(r,f)$, and recalling
Proposition \ref{LLDh} (with $\tau >1-\rho_{2})$, we obtain

\begin{theorem}\label{SMT} Suppose $f$ is a nonconstant tropical meromorphic
function of hyper-order $\rho_{2}<1$, and take $0<\delta
<1-\rho_{2}$. If $q\geq 1$ distinct values $a_{1},\ldots
,a_{q}\in\mathbb{R}$ satisfy $\max (a_{1},\ldots ,a_{q})<L_{f}$,
then
\begin{equation}\label{SMTest}
(q-1)T(r,f)\leq\sum_{j=1}^q N\bigl(r, 1_{\circ}\oslash (f\oplus
a_j)\bigr)-N(r,1_{\circ}\oslash f) +o\bigl(T(r,f)/r^{\delta}\bigr)
\end{equation}
outside an exceptional set of finite logarithmic measure.
\end{theorem}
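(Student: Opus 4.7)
The plan is to derive Theorem~\ref{SMT} from the inequality displayed immediately before its statement, which is the rewrite of Theorem~\ref{thm:second} in terms of the multiple-value counting function $N_{1}(r,f)$. Substituting the definition $N_{1}(r,f)=N\bigl(r,1_{\circ}\oslash f(x+c)\bigr)+2N(r,f)-N\bigl(r,f(x+c)\bigr)$ collapses the four $N$-terms on the right into $-N\bigl(r,1_{\circ}\oslash f(x+c)\bigr)$, recovering
\[
qT(r,f)\leq \sum_{j=1}^{q}N\bigl(r,1_{\circ}\oslash(f\oplus a_{j})\bigr)+T\bigl(r,f(x+c)\bigr)-N\bigl(r,1_{\circ}\oslash f(x+c)\bigr)+m\bigl(r,f(x+c)\oslash f(x)\bigr)+O(1).
\]

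The second step is to replace each shifted quantity on the right by its unshifted analogue, with errors of order $o(T(r,f)/r^{\delta})$. Since $\rho_{2}(f)<1$ and $\delta\in(0,1-\rho_{2})$, Lemma~\ref{technical} applies to $T(r,f)$, which is non-decreasing and continuous by Theorem~\ref{mono}, giving $T(r+s,f)=T(r,f)+o(T(r,f)/r^{\delta})$ outside a set of finite logarithmic measure for each fixed $s>0$. Applying the same lemma to $N(r,1_{\circ}\oslash f)=N(r,-f)$, which is also non-decreasing and continuous and of hyper-order no larger than $\rho_{2}(f)$, yields the analogous asymptotic. Combined with the elementary comparison $T\bigl(r,f(x+c)\bigr)=T(r+|c|,f)+O(1)$ and its counterpart for the counting function---both obtained by unpacking how a shift in the argument of $f$ translates the interval in which poles and zeros are counted and the points at which $f$ is evaluated---these give
\[
T\bigl(r,f(x+c)\bigr)-N\bigl(r,1_{\circ}\oslash f(x+c)\bigr)=T(r,f)-N\bigl(r,1_{\circ}\oslash f\bigr)+o\bigl(T(r,f)/r^{\delta}\bigr).
\]
Finally, Proposition~\ref{LLDh} directly provides $m\bigl(r,f(x+c)\oslash f(x)\bigr)=o(T(r,f)/r^{\delta})$ outside another exceptional set of finite logarithmic measure.

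Inserting these three asymptotics, taking the union of the finitely many exceptional sets (still of finite logarithmic measure), and subtracting $T(r,f)$ from both sides yields the desired $(q-1)T(r,f)\leq\sum_{j=1}^{q}N\bigl(r,1_{\circ}\oslash(f\oplus a_{j})\bigr)-N(r,1_{\circ}\oslash f)+o(T(r,f)/r^{\delta})$; the $O(1)$ term and the finitely many additive constants produced along the way are absorbed into the error using the fact that a non-constant tropical meromorphic function has $T(r,f)/r^{\delta}\to\infty$. The main obstacle I anticipate is not the overall strategy---telegraphed by the remark preceding the statement---but the careful bookkeeping required to pass from the shifted characteristics $T\bigl(r,f(x+c)\bigr)$ and $N\bigl(r,1_{\circ}\oslash f(x+c)\bigr)$ to $T(r+|c|,f)$ and $N(r+|c|,1_{\circ}\oslash f)$ before Lemma~\ref{technical} can be invoked, which requires verifying that the discrepancy between a shift of argument and a shift of radius is itself bounded and therefore absorbable into the error.
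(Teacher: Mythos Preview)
Your overall strategy is precisely the paper's: start from Theorem~\ref{thm:second}, control the shift term $m\bigl(r,f(x+c)\oslash f(x)\bigr)$ by Proposition~\ref{LLDh}, and absorb the discrepancy between shifted and unshifted Nevanlinna functions using Lemma~\ref{technical}. The gap is exactly where you feared it would be, and it is real: the ``elementary comparison'' $T\bigl(r,f(x+c)\bigr)=T(r+|c|,f)+O(1)$ is false in general. For any nonconstant $1$-periodic tropical meromorphic $f$ and $c=1$ one has $T\bigl(r,f(x+1)\bigr)=T(r,f)$ by periodicity, whereas $T(r+1,f)-T(r,f)=\tfrac{1}{2}\int_{r}^{r+1}n(t,f)\,dt\asymp r$, so the discrepancy grows linearly, not boundedly. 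The problem is that the proximity function $m(r,\cdot)$ samples $f$ only at the two endpoints $\pm r$, and there is no monotonicity to exploit; a shift of argument and a shift of radius move these sample points in opposite directions on the negative side.

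The paper sidesteps this by never comparing $T\bigl(r,f(x+c)\bigr)$ with $T(r+|c|,f)$ at all. Instead it splits $T\bigl(r,f(x+c)\bigr)=m\bigl(r,f(x+c)\bigr)+N\bigl(r,f(x+c)\bigr)$ and handles the two pieces separately. For the proximity part it uses $m\bigl(r,f(x+c)\bigr)\leq m(r,f)+m\bigl(r,f(x+c)\oslash f(x)\bigr)$ together with a second application of Proposition~\ref{LLDh}, yielding
\[
T\bigl(r,f(x+c)\bigr)\leq T(r,f)+N\bigl(r,f(x+c)\bigr)-N(r,f)+o\bigl(T(r,f)/r^{\delta}\bigr).
\]
For the counting functions, where your intuition is sound, it uses only the one-sided comparisons $N\bigl(r,f(x+c)\bigr)\leq N(r+|c|,f)$ and $N\bigl(r,1_{\circ}\oslash f(x+c)\bigr)\geq N(r-|c|,1_{\circ}\oslash f)$, which do follow from the elementary shift-of-interval argument you sketch, and then Lemma~\ref{technical}. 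Plugging these three inequalities back into Theorem~\ref{thm:second} gives the result. So the repair to your argument is small: abandon the putative $O(1)$ identity for $T$, and instead invoke Proposition~\ref{LLDh} once more to control $m\bigl(r,f(x+c)\bigr)$.
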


\begin{proof} The desired inequality immediately follows from Theorem \ref{thm:second},
combined with Proposition \ref{LLDh} and the next three
inequalities, each of them being valid outside an exceptional set of
finite logarithmic measure:
\begin{eqnarray*}
T\bigl(r, f(x+c)\bigr)
& \leq & T(r,f)+ N\bigl(r,f(x+c)\bigr) - N(r,f) +o\bigl(T(r,f)/r^{\delta}\bigr), \\
N\bigl(r, f(x+c)\bigr) & \leq &  N(r+|c|, f) = N(r, f)+
o\bigl(T(r,f)/r^{\delta}\bigr)
\end{eqnarray*}
and
$$N\bigl(r, 1_{\circ}\oslash f(x+c)\bigr)\geq N(r-|c|, 1_{\circ}\oslash f) =
N(r, 1_{\circ}\oslash f) +o\bigl(T(r,f)/r^{\delta}\bigr).
$$
These inequalities are immediate consequences of Lemma
\ref{technical}.
\end{proof}

\begin{remark} Observe that whenever $f$ is of finite order $\rho$,
and so of hyper-order $\rho_{2}=0$, the error term
$o\bigl(T(r,f)/r^{\delta}\bigr)$ in Theorem \ref{SMT} may be
replaced by $O(r^{\rho -1+\varepsilon})$ with $\varepsilon >0$.
Similarly in Corollary \ref{nodef} below.
\end{remark}

\begin{corollary}\label{nodef}
Suppose $f$ is a nonconstant tropical meromorphic function of
hyper-order $\rho_{2}<1$, and take $0<\delta <1-\rho_{2}$. If If
$q\geq 1$ distinct values $a_{1},\ldots ,a_{q}\in\mathbb{R}$ satisfy
$\max (a_{1},\ldots ,a_{q})<L_{f}$, and if $\ell_f:=\inf\{ f(a) \, :
\, \omega_f(a)>0 \}> -\infty$, then
\begin{equation} \label{SMT_f}
qT(r,f)\leq \sum_{j=1}^q N\bigl(r, 1_{\circ}\oslash (f\oplus
a_j)\bigr) + o\bigl(T(r,f)/r^{\delta}\bigr).
\end{equation}
outside an exceptional set of finite logarithmic measure. In
particular,
\begin{equation}\label{SMTall}
T(r,f)\leq N\bigl(r, 1_{\circ}\oslash (f\oplus a)\bigr)
+o\bigl(T(r,f)/r^{\delta}\bigr)
\end{equation}
holds for all $a\in\mathbb{R}$ such that $a<L_{f}$.
\end{corollary}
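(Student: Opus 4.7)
The plan is to derive Corollary \ref{nodef} directly from the first main theorem (Theorem \ref{FMT}), bypassing Theorem \ref{SMT}. Concretely, for each fixed $a_j<L_f$ I will bound $N(r, 1_{\circ}\oslash (f\oplus a_j))$ from below by $T(r,f)$ minus an explicit constant, and then sum the $q$ resulting inequalities. The constant error will be absorbed into $o(T(r,f)/r^{\delta})$ at the end using the fact that $T(r,f)$ grows at least linearly for any nonconstant tropical meromorphic $f$: whether it has a pole, contributing to $N(r,f)\gtrsim r$, or is nonconstant tropical entire, in which case convexity forces $f(r)$ or $f(-r)$ to grow linearly.

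First, for each $j$, since $a_j<L_f$, Theorem \ref{FMT} gives
$$T\bigl(r, 1_{\circ}\oslash (f\oplus a_j)\bigr) \geq T(r,f) - \max(f(0), a_j)$$
by discarding the nonnegative error $\varepsilon(r,a_j)$. Second, I will establish the uniform bound
$$m\bigl(r, 1_{\circ}\oslash (f\oplus a_j)\bigr) \leq \max(-a_j, 0)$$
by a short pointwise check: writing $g_j:=-\max(f, a_j)$, at each $x$ one has $g_j(x)=-f(x)\leq -a_j$ when $f(x)\geq a_j$, and $g_j(x)=-a_j$ otherwise, so in either case $g_j^+(x)\leq \max(-a_j,0)$; averaging at $x=\pm r$ gives the bound. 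Combining these through $T=m+N$ yields
$$N\bigl(r, 1_{\circ}\oslash (f\oplus a_j)\bigr) \geq T(r,f) - \max(f(0),a_j) - \max(-a_j,0),$$
and summing over $j$ produces $qT(r,f)\leq \sum_{j=1}^q N(r, 1_{\circ}\oslash (f\oplus a_j)) + C$, where $C$ depends only on $f(0)$ and the $a_j$. Since $T(r,f)/r^{\delta}\to\infty$ for $\delta<1$, the constant $C$ is $o(T(r,f)/r^{\delta})$, proving (\ref{SMT_f}); (\ref{SMTall}) is the $q=1$ instance.

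The only technical point is the elementary case analysis for the $m$-bound; after that the proof is essentially bookkeeping. Note that this route uses neither Theorem \ref{SMT} nor the hypothesis $\ell_f>-\infty$, and yields the stronger conclusion with an $O(1)$ error and no exceptional set at all. An attempted derivation via Theorem \ref{SMT} would instead require $m(r, 1_{\circ}\oslash f)=o(T(r,f)/r^{\delta})$, which by the tropical Jensen formula would force $f$ to be essentially bounded below---a condition that $\ell_f>-\infty$ alone does not quite guarantee (e.g.\ $f(x)=-|x|$ has $\ell_f=+\infty$ yet $m(r, 1_{\circ}\oslash f)=r$). So the direct route above seems preferable.
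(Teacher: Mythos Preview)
Your argument is correct, and it takes a genuinely different---and more elementary---route than the paper's own proof. The paper proceeds by shifting: it replaces $f$ by $g:=f-\mu$ with $\mu<\min\{\max_j a_j,\ell_f\}$, adjoins the extra target $\tilde a_0=0$, applies Theorem~\ref{SMT} to $g$ with the $q+1$ values $\tilde a_0,\ldots,\tilde a_q$, and then uses $\ell_g>0$ to argue that $N\bigl(r,1_\circ\oslash(g\oplus 0)\bigr)=N(r,1_\circ\oslash g)$, so that these two terms cancel and the inequality drops back to $q$ targets. Thus the paper's proof genuinely rests on the full second-main-theorem machinery (Theorem~\ref{thm:second}, Proposition~\ref{LLDh}, Lemma~\ref{technical}), and the hypothesis $\ell_f>-\infty$ is used precisely to choose $\mu$ so that the cancellation step goes through.

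Your approach sidesteps all of this. The pointwise bound $-\max(f,a_j)\le -a_j$ gives $m\bigl(r,1_\circ\oslash(f\oplus a_j)\bigr)\le\max(-a_j,0)$ immediately, and together with the lower bound from Theorem~\ref{FMT} this yields $N\bigl(r,1_\circ\oslash(f\oplus a_j)\bigr)\ge T(r,f)-C_j$ with an explicit constant. Summing gives the result with an $O(1)$ error, valid for every $r$ and with no exceptional set, and without ever invoking $\ell_f>-\infty$ or the hyper-order hypothesis. (Your growth claim $T(r,f)\gtrsim r$ for nonconstant $f$ is also fine: a pole contributes linearly to $N$, while a nonconstant tropical entire function is convex and hence has $f^+(r)+f^+(-r)\gtrsim r$.) It is worth noting that the paper's own Remark following Corollary~\ref{nodef} records exactly your key identity in the special case $a=0$, observing that $m\bigl(r,1_\circ\oslash(f\oplus 0)\bigr)=0$ and hence $N\bigl(r,1_\circ\oslash(f\oplus 0)\bigr)=T(r,f)-\max(f(0),0)$; your contribution is to see that the same mechanism, with a bounded rather than vanishing proximity term, already suffices for arbitrary $a<L_f$ and hence for the Corollary itself. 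Your closing comment about the naive route through Theorem~\ref{SMT} is also apt: that route would need $m(r,1_\circ\oslash f)$ small, which $\ell_f>-\infty$ alone does not give (your example $f(x)=-|x|$ is to the point); the paper avoids this by the $\mu$-shift trick rather than by a direct appeal.
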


\begin{proof}
Let $a_{1},\ldots ,a_{q}$ be $q$ distinct real values such that
$a_j<L_f$, $1\leq j\leq q$. In order to prove the assertion
(\ref{SMT_f}), choose a real number $\mu$ such that
$$
\mu < \min\Bigl\{ \max(a_1, \ldots , a_q), \ell_f \Bigr\}
$$
and put
$$
g(x):=f(x)-\mu, \quad \tilde{a}_j:=a_j-\mu (>0) \ (1\leq j\leq q)
\quad \text{and} \quad \tilde{a}_0:=0.
$$
Then the following observations are easily checked:
\begin{itemize}
\item $\rho_2(g)=\rho_2(f)$ as well as $\rho (g)=\rho (f)$,
\item $\omega_g(x)\equiv \omega_f(x)$,
\item $L_g:=\inf\{ g(b) \, : \, \omega_g(b)<0 \}=L_f-\mu$,
\item $L_g-\max(\tilde{a}_0, \tilde{a}_1, \ldots, \tilde{a}_q)=L_f-\max(a_1, \ldots , a_q)>0$,
\item $\ell_g:=\inf\{ g(a) \, : \, \omega_g(a)>0 \}=\ell_f-\mu > 0$.
\end{itemize}
We now apply Theorem~\ref{SMT} to the function $g(x)$ and the $q+1$
distinct values $\tilde{a}_j$ to obtain
\begin{equation}\label{SMT_g}
qT(r,g)\leq\sum_{j=0}^q N\bigl(r, 1_{\circ}\oslash (g\oplus
\tilde{a}_j)\bigr) - N(r, 1_{\circ}\oslash g) +S_{\ast}(r,g)
\end{equation}
outside an exceptional set of finite logarithmic measure. Since
$\ell_g>0$, the two functions $1_{\circ}(g\oplus 0)=-\max(g,0)$ and
$1_{\circ}\oslash g=-g$ have exactly the same zeros, and therefore
$$
N\bigl(r, 1_{\circ}\oslash (g\oplus \tilde{a}_0)\bigr) - N(r,
1_{\circ}\oslash g) \equiv 0
$$
in the above inequality. Since $T(r, f-\mu)\geq T(r,f)-\mu$ and
$g\oplus \tilde{a}_j=(f\oplus a_j)-\mu$, we obtain the desired
estimate (\ref{SMT_f}) for the original function $f$.
\end{proof}

\begin{remark} We should perhaps point out here that by this corollary,
nonconstant tropical meromorphic functions of hyper-order
$\rho_{2}<1$ and satisfying $\ell_f\neq -\infty$ have no deficient
values $a<L(f)$ in the sense that
$$1-\limsup_{r\rightarrow\infty}\frac{N(r,1_{\circ}\oslash (f\oplus a))}{T(r,f)}=0.$$
However, omitted values may well appear. For example, any linear
function has two omitted values $0$ and $\infty$. Moreover, rational
functions of shape $\wedge$, resp. of $\vee$, also omit roots, resp.
poles. Indeed, the estimates (\ref{secondest}), (\ref{SMT_f}) and
(\ref{SMTall}) above don't include, in general, consideration of the
roots, resp. the poles. In fact, the counting functions
$N(r,1_{\circ}\oslash f)$ and $N(r,1_{\circ}\oslash (f\oplus 0))$ do
not coincide in general. Note that $m(r,1_{\circ}\oslash (f\oplus
0))=0$, since $1_{\circ}\oslash (f\oplus 0)=-f^{+}\leq 0$, hence
$N(r,1_{\circ}\oslash (f\oplus 0))=T(r,1_{\circ}\oslash (f\oplus
0))=T(r,f)-\max (f(0),0)$ by the first main theorem, Theorem
\ref{FMT}, for any tropical meromorphic function $f$. On the other
hand, to get $N(r,1_{\circ}\oslash f)=T(r,f)-\max (f(0),0)$, we need
to have $\ell_{f}\geq 0$.

\medskip

To illustrate these comments, consider the linear function
$f(x)=x+1$, taking $r$ large enough, and $q=1$, $a_1=0$ and $c>0$.
Then we have $L_f=\ell_f=+\infty$ and
$$
N(r,f)=N(r, 1_{\circ}\oslash
f)=N\bigl(r,f(x+c)\bigr)=N\bigl(r,1_{\circ}\oslash
f(x+c)\bigr)\equiv 0.
$$
Moreover, $T(r,f)=m(r,f)=\frac{r+1}{2}$,
$T\bigl(r,f(x+c)\bigr)=m\bigl(r,f(x+c)\bigr)=\frac{r+c+1}{2}$ and
$m\bigl(r, f(x+c)\oslash f(x)\bigr)=c$. Therefore, (\ref{secondest})
takes the form
$$\frac{r+1}{2}\leq r+\frac{c+1}{2},$$
while (\ref{SMT_f}) in Corollary \ref{nodef} becomes
$$\frac{r+1}{2}\leq \frac{r}{2} + O(r^{\varepsilon})$$
for any $\varepsilon>0$.

\medskip

Finally, we remark that the assumption $\rho_{2}<1$ above cannot be
deleted. To see this, the reader may consider the tropical
exponential functions $e_{\alpha}$ in Section \ref{tropexp} below.
In particular, such a function may have uncountably many deficient
values, see a remark after Proposition \ref{order}.
\end{remark}

\begin{remark} The estimate (\ref{secondest}) given in Theorem
\ref{thm:second} may indeed be written in the form
\begin{multline}
\sum_{k=1}^q m\Bigl(r, 1_{\circ}\oslash (f\
\oplus a_k)\Bigr) \\
\leq m\bigl(r, 1_{\circ}\oslash f(x+c)\bigr)
+ m\left(r, \bigoplus_{k=1}^q f(x+c)\oslash \bigl(f(x)\oplus a_k\bigr)\right)\\
+ (2q-1) \max_{1\leq k\leq q} \max(a_k, 0)+\sum_{k=1}^q \max (a_k,
0)\,.
\end{multline}
This is an obvious tropical counterpart to the classical inequality
$$
\sum_{k=1}^q m\biggl(r, 1/\bigl(f(z)-a_k\bigr)\biggr) \leq m\bigl(r,
1/f'(z)\bigr) + m\left(r,\sum_{k=1}^q
\frac{f'(z)}{f(z)-a_k}\right)+O(1)\,,
$$
see e.g. \cite{H}, 32--33. On the other hand, Theorem \ref{SMT} and
Corollary \ref{nodef} are reminiscent to the classical second main
theorem.
\end{remark}

\bigskip

In order to prove Theorem \ref{thm:second}, we prepare a sequence of
lemmas.

\begin{lemma} \label{lemma1}
For any $p\in\mathbb{N}$, any $a_k\in\mathbb{R}$ $(1\leq k\leq p)$
and any $c\in\mathbb{R}\setminus\{0\}$, we have
\begin{multline*}
m\biggl(r,1_{\circ}\oslash \bigl(\bigotimes_{k=1}^p (f\oplus
a_k)\bigr)\biggr)
\leq T\bigl(r, f(x+c)\bigr)- N\bigl(r, 1_{\circ}\oslash f(x+c)\bigr)+ \\
         +m\biggl(r,f(x+c) \oslash \bigl(\bigotimes_{k=1}^p (f\oplus a_k)\bigr)\biggr)-f(c)\,.
\end{multline*}
\end{lemma}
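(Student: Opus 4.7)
The plan is to reduce the bound to the tropical Jensen formula. Write $F := \bigotimes_{k=1}^p (f\oplus a_k)$. The key observation is the tropical identity
$$1_\circ \oslash F \;=\; \bigl(f(x+c)\oslash F\bigr)\otimes\bigl(1_\circ\oslash f(x+c)\bigr),$$
which in ordinary notation simply reads $-F = (f(x+c)-F) + (-f(x+c))$. Applying the sub-additivity of $m$ from Lemma~\ref{elem}(iii) then yields
$$m\bigl(r, 1_\circ\oslash F\bigr) \;\leq\; m\bigl(r, f(x+c)\oslash F\bigr) + m\bigl(r, 1_\circ\oslash f(x+c)\bigr).$$

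Next I would rewrite the last term using $m(r,\cdot)=T(r,\cdot)-N(r,\cdot)$ and evaluate $T\bigl(r, 1_\circ \oslash f(x+c)\bigr)$ by Jensen's formula~\eqref{jensen}. Setting $g(x):=f(x+c)$ so that $g(0)=f(c)$, Jensen gives $T(r,g)-T(r,-g)=f(c)$, i.e.
$$T\bigl(r, 1_\circ\oslash f(x+c)\bigr) \;=\; T\bigl(r, f(x+c)\bigr) - f(c),$$
hence
$$m\bigl(r, 1_\circ\oslash f(x+c)\bigr) \;=\; T\bigl(r, f(x+c)\bigr) - f(c) - N\bigl(r, 1_\circ\oslash f(x+c)\bigr).$$
Substituting this into the previous display gives the claimed inequality.

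Structurally this mimics the classical trick of factoring $1/\prod (f-a_k) = \bigl(f'/\prod(f-a_k)\bigr)\cdot\bigl(1/f'\bigr)$ that underlies Nevanlinna's second main theorem, with the shift $x\mapsto x+c$ playing the role of differentiation. Since the argument relies only on Jensen, the identity $m=T-N$, and the subadditivity of $m$, I do not anticipate any real obstacle; the only step requiring a moment of care is tracking the constant $f(c)$ in the Jensen computation, which enters precisely because we are evaluating a shifted function at the origin.
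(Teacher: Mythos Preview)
Your proof is correct and matches the paper's own argument essentially step for step: the paper too factors $1_{\circ}\oslash F=\bigl(1_{\circ}\oslash f(x+c)\bigr)\otimes\bigl(f(x+c)\oslash F\bigr)$, applies $m(r,g\otimes h)\leq m(r,g)+m(r,h)$, and then converts $m\bigl(r,1_{\circ}\oslash f(x+c)\bigr)$ via Jensen's formula, picking up the constant $-f(c)$ exactly as you describe.
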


\begin{proof} Note that
$$
1_{\circ}\oslash \bigl(\bigotimes_{k=1}^p (f\oplus a_k)\bigr)
=\bigl(1_{\circ}\oslash f(x+c)\bigr)
      \otimes \biggl(f(x+c)\oslash \bigl(\bigotimes_{k=1}^p (f\oplus a_k)\bigr)\biggr).
$$
Since $m(r,g\otimes h)\leq m(r,g)+m(r,h)$, we have
\begin{eqnarray*}
& & \hspace{-1cm}
m\biggl(r,1_{\circ}\oslash \bigl(\bigotimes_{k=1}^p (f\oplus a_k)\bigr)\biggr) \\
&\leq& m\bigl(r, 1_{\circ}\oslash f(x+c) \bigr)
     +m\biggl(r, f(x+c) \oslash \bigl(\bigotimes_{k=1}^p (f\oplus a_k)\bigr)\biggr)\\
&=&
T\bigl(r, f(x+c))-f(0+c) - N\bigl(r, 1_{\circ}\oslash f(x+c)\bigr)+\\
& & \qquad +m\biggl(r, f(x+c) \oslash \bigl(\bigotimes_{k=1}^p
(f\oplus a_k)\bigr)\biggr),
\end{eqnarray*}
by using Jensen's formula.
\end{proof}

The following inequality is important below as a replacement to the
usual partial fraction decomposition applied in the proof of the
classical Second Main Theorem:

\begin{lemma} \label{prod-sum}
For any $p\in\mathbb{N}$, any $a_k\in\mathbb{R}$ $(1\leq k\leq p)$,
and for $b_k:=-(p-1)a_k$ $(1\leq k\leq p)$, we have
\begin{equation} \label{eqn:prod-sum}
1_{\circ}\oslash \bigl(\bigotimes_{k=1}^p (x \oplus a_k)\bigr) \leq
\bigotimes_{k=1}^p \bigl(b_k \oslash (x \oplus a_k)\bigr)\,,
\end{equation}
for any $x\in\mathbb{R}$.
\end{lemma}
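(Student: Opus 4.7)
The plan is to unpack the tropical operations into ordinary arithmetic. The left-hand side equals $-\sum_{k=1}^{p}\max(x,a_k)$. Taking the outer $\bigotimes$ on the right-hand side literally gives $\sum_{k=1}^{p}\bigl(-(p-1)a_k-\max(x,a_k)\bigr)=-(p-1)\sum_{k=1}^{p}a_k-\sum_{k=1}^{p}\max(x,a_k)$, so the displayed inequality would collapse to $(p-1)\sum_k a_k\leq 0$, which is patently false once some $a_k$ are positive (for instance the $\tilde a_j>0$ used in Corollary~\ref{nodef}). I therefore read the outer operator on the right as the tropical sum $\bigoplus_{k=1}^{p}$; under this reading the lemma becomes the natural max-plus analogue of the partial-fraction bound $|1/\!\prod_k(z-a_k)|\lesssim\max_k|1/(z-a_k)|$ that it is designed to replace in the Second Main Theorem, and the sequel indeed uses the lemma in exactly this form.

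Under that reading, multiplying through by $-1$ turns the inequality into
\[
\min_{1\leq k\leq p}\bigl[(p-1)a_k+\max(x,a_k)\bigr]\;\leq\;\sum_{k=1}^{p}\max(x,a_k),
\]
and my strategy is to exhibit a single index for which the bracketed quantity alone already lies below the full sum. The winning choice is
\[
k_{0}:=\arg\min_{1\leq k\leq p}\max(x,a_k),
\]
the index realising the smallest of the $p$ numbers $\max(x,a_1),\dots,\max(x,a_p)$.

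For this $k_{0}$ two elementary observations finish the proof. First, $a_{k_{0}}\leq\max(x,a_{k_{0}})$, hence $(p-1)a_{k_{0}}+\max(x,a_{k_{0}})\leq p\max(x,a_{k_{0}})$. Second, by the defining property of $k_{0}$, $\max(x,a_{k_{0}})\leq\max(x,a_k)$ for every $k$, and therefore $p\max(x,a_{k_{0}})\leq\sum_{k=1}^{p}\max(x,a_k)$. Chaining the two yields $(p-1)a_{k_{0}}+\max(x,a_{k_{0}})\leq\sum_{k=1}^{p}\max(x,a_k)$, and since replacing $k_{0}$ by the global minimiser on the left only decreases it, the desired inequality follows.

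The whole argument is elementary; the one piece of real content is the strategic choice of $k_{0}$, which I expect to be the proof given in the paper. I foresee no genuine technical obstacle, the only delicate point being the need to read the final $\bigotimes$ as $\bigoplus$, since the literal printed statement fails for positive $a_k$.
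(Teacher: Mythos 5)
Your proof is correct, and your reading of the right-hand side is the intended one: the paper's own proof immediately rewrites (\ref{eqn:prod-sum}) in the equivalent form $\sum_{k=1}^p \max(x,a_k) \geq \min_{1\leq k\leq p}\bigl(\max(x,a_k)+(p-1)a_k\bigr)$, which is exactly the $\bigoplus$-version you identified, and the application in the proof of Lemma \ref{lemma2} likewise uses $\bigoplus_{k=1}^p b_k \oslash \bigl(f(x)\oplus a_k\bigr)$ --- so the printed outer $\bigotimes$ is indeed a slip, and your diagnosis that the literal reading collapses to $(p-1)\sum_k a_k\leq 0$ is accurate. Where you genuinely differ from the paper is in how the rewritten inequality is established. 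The paper assumes $a_1\leq a_2\leq\cdots\leq a_p$ without loss of generality and runs a three-case analysis ($x\leq a_1$; $a_{j-1}\leq x\leq a_j$ for some $j$; $a_p\leq x$), in each case bounding the left-hand side from below and the right-hand side from above by a common pivot quantity such as $pa_1$ or $x+(p-1)a_{j-1}$. You instead exhibit a single witness index $k_0$ minimising $\max(x,a_k)$ and chain $(p-1)a_{k_0}+\max(x,a_{k_0})\leq p\,\max(x,a_{k_0})\leq\sum_{k=1}^p\max(x,a_k)$, using only $a_{k_0}\leq\max(x,a_{k_0})$, the fact that $p-1\geq 0$, and the minimality of $k_0$; since the min over $k$ is at most the value at $k=k_0$, the inequality follows. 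This is slicker: it needs no ordering and no cases, and it in fact subsumes the paper's argument, because after sorting one may always take $k_0=1$, so your two one-line estimates unify all three of the paper's cases (and incidentally sidestep the minor index typos, $a_{k-1}$ for $a_{j-1}$, in the paper's Case ii). The only thing the case analysis buys in exchange is explicit information about which term realises the minimum in each regime of $x$, which the lemma as stated does not need.
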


\begin{proof}
Inequality (\ref{eqn:prod-sum}) is equivalent to
\begin{equation} \label{eqn:rewrite}
\sum_{k=1}^p \max(x, a_k) \geq \min_{1\leq k\leq p} \bigl(\max(x,
a_k)+(p-1)a_k\bigr)\,.
\end{equation}
Here, we may assume without loss of generality
$$
a_1\leq a_2\leq \cdots \leq a_p.
$$

{\it Case $i)$:} Suppose first that $x\leq a_{1}$. Then
$$
\max(x, a_k)=a_k \quad (1\leq k \leq p)
$$
and thus the left-hand side of (\ref{eqn:rewrite}) becomes
$$
\sum_{k=1}^p a_k \geq pa_1\,,
$$
while the right-hand side of (\ref{eqn:rewrite}) is
$$
p \min_{1\leq k\leq p} a_k=p a_1\,.
$$
Hence (\ref{eqn:rewrite}) holds in this case.

\medskip

{\it Case $ii)$:} If $a_{j-1}\leq x\leq a_{j}$ for some $2\leq j\leq
p$, then
\[
\max(x, a_k)= \left\{
\begin{array}{cl}
x & (1\leq k\leq j-1) \\
a_k \quad (j \leq k \leq p)
\end{array}
\right.\,.
\]
Thus the left-hand side of (\ref{eqn:rewrite}) becomes
$$
(j-1) x + \sum_{k=j}^p a_k \geq x+ (j-2) a_{j-1} + \sum_{k=j}^p a_k
\geq x + (p-1)a_{k-1}\,,
$$
and the right-hand side of (\ref{eqn:rewrite}) becomes
$$
\min \bigl\{ x+(p-1)a_1\,, \ \ldots \,, \ x+(p-1)a_{k-1}\,, \
pa_k\,, \ \ldots \, , \ p a_p \bigr\} \leq x+(p-1) a_{k-1}\,,
$$
verifying (\ref{eqn:rewrite}) in this case.

\medskip

{\it Case $iii)$:} If $a_{p}\leq x$, then we have
$$
\max(x, a_k)=x \quad (1\leq k\leq p)\,.
$$
The left-hand side of (\ref{eqn:rewrite}) is now
$$
p x \geq x+(p-1)a_1,
$$
while the right-hand side of (\ref{eqn:rewrite}) becomes
$$
\min_{1\leq k\leq p} \bigl(x+(p-1)a_k\bigr) \leq x+(p-1) a_1\,,
$$
proving the remaining case of (\ref{eqn:rewrite}).
\end{proof}

\begin{lemma} \label{lemma2}
For any $p\in\mathbb{N}$ and any $a_k\in\mathbb{R}$ $(1\leq k\leq
p)$ and any $c\in\mathbb{R}\setminus\{0\}$, we have
\begin{multline*}
m\biggl(r,f(x+c) \oslash \bigl(\bigotimes_{k=1}^p (f\oplus
a_{k})\bigr)\biggr) \\ \leq m\left(r, \bigoplus_{k=1}^p
                 \Bigl(f(x+c)\oslash \bigl(f(x)\oplus a_k\bigr) \Bigr) \right)
+(p-1)\max_{1\leq j\leq p} \max(a_j, 0)\,.
\end{multline*}
\end{lemma}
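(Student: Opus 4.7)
The plan is to use Lemma~\ref{prod-sum} as the tropical substitute for the classical partial-fraction identity, transport its conclusion across tropical multiplication by $f(x+c)$, and finally pass to the proximity function. The algebraic heavy lifting has already been done inside Lemma~\ref{prod-sum}, so the present statement is essentially a corollary of it.

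First, I would apply Lemma~\ref{prod-sum} pointwise with $x$ replaced by $f(y)$ for each $y\in\mathbb{R}$, obtaining
$$1_\circ \oslash \bigotimes_{k=1}^p\bigl(f(y)\oplus a_k\bigr) \;\leq\; \bigoplus_{k=1}^p \bigl(b_k\oslash(f(y)\oplus a_k)\bigr), \qquad b_k=-(p-1)a_k.$$
Tropically multiplying both sides by $f(y+c)$ and using the distributive law $g\otimes\bigoplus_k h_k = \bigoplus_k(g\otimes h_k)$, which in ordinary arithmetic reads $g+\max_k h_k=\max_k(g+h_k)$, the left-hand side becomes $f(y+c)\oslash\bigotimes_k(f\oplus a_k)$, and each term on the right becomes $\bigl(f(y+c)\oslash(f(y)\oplus a_k)\bigr)+b_k$. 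Then the elementary bound $\max_k(A_k+B_k)\leq \max_k A_k+\max_k B_k$ taken with $B_k=b_k$ lets me pull the constants outside the maximum at an additive cost of at most $(p-1)\max_k(-a_k)$, which in turn is dominated by $(p-1)\max_{1\leq j\leq p}\max(a_j,0)$.

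Finally, I would pass from this pointwise inequality to the claimed proximity estimate via a routine observation: if $g_1(y)\leq g_2(y)+C$ pointwise with a constant $C\geq 0$, then $(g_1)^+\leq (g_2)^++C$, so by the definition $m(r,g)=\tfrac{1}{2}(g^+(r)+g^+(-r))$ one gets $m(r,g_1)\leq m(r,g_2)+C$. Taking $C=(p-1)\max_j\max(a_j,0)\geq 0$ produces the asserted inequality.

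The one point that needs genuine care is the sign bookkeeping when the constants $b_k=-(p-1)a_k$ are separated from the tropical max in the second step: one has to verify that the resulting additive error is indeed controlled by $(p-1)\max_j\max(a_j,0)$ as written, and in particular to choose the right side of the estimate $\max_k(-a_k)\leq \max_j\max(a_j,0)$ in light of the implicit hypotheses inherited from Theorem~\ref{thm:second}. Apart from this bit of sign arithmetic, every remaining step is a direct transliteration of the classical partial-fraction step into tropical notation.
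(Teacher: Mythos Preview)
Your strategy is exactly the paper's: apply Lemma~\ref{prod-sum} pointwise, tropically multiply by $f(x+c)$, split off the constants $b_k=-(p-1)a_k$ via $\max_k(A_k+b_k)\le\max_kA_k+\max_kb_k$, and then pass to $m(r,\cdot)$. The gap is precisely the step you yourself flag. After separating constants you have the additive error $(p-1)\max_k(-a_k)$, and you then assert $\max_k(-a_k)\le\max_j\max(a_j,0)$. This is false whenever any $a_k$ is negative: with $p=2$, $a_1=-1$, $a_2=-2$ one gets $\max_k(-a_k)=2$ while $\max_j\max(a_j,0)=0$. There is no ``implicit hypothesis inherited from Theorem~\ref{thm:second}'' that saves this; that theorem only assumes $a_j<L_f$, which imposes no lower bound on the $a_j$.

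In fact the constant in the lemma as stated is wrong, and the paper's own proof contains the same slip (it writes $(p-1)\max_k a_k$ in the pointwise estimate, which is not an upper bound for $\max_k b_k$). A concrete counterexample: take $p=2$, $a_1=-1$, $a_2=-2$, $c=1$, and $f(x)=-3/2$ for $x\le1$, $f(x)=-3/2+100(x-1)$ for $x\ge1$; then at $r=1$ the left-hand proximity function equals $51$, the right-hand one equals $50$, and the asserted error term is $0$. Your argument becomes correct, and the downstream application in Theorem~\ref{thm:second} is unaffected up to a harmless change of $O(1)$ constant, if the error term is replaced by $(p-1)\max_{1\le j\le p}\max(-a_j,0)$ or simply $(p-1)\max_j|a_j|$. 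But the ``bit of sign arithmetic'' you defer cannot be completed as written.
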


\begin{proof} First, applying Lemma~\ref{prod-sum}, we see
\begin{eqnarray*}
f(x+c) \oslash \biggl(\bigotimes_{k=1}^p \bigl(f(x) \oplus
a_k\bigr)\biggr) &=&
f(x+c) \otimes \Bigl( \bigoplus_{k=1}^p b_k \oslash \bigl(f(x) \oplus a_k \bigr)\Bigr) \\
&=&
\bigoplus_{k=1}^p \left\{ f(x+c) \otimes \Bigl( b_k \oslash \bigl(f(x)\oplus a_k\bigr) \Bigr) \right\} \\
&=&
\max_{1\leq k\leq p} \left\{ f(x+c) + \Bigl( b_k - \max \bigl(f(x), a_k\bigr) \Bigr) \right\} \\
&\leq &
\max_{1\leq k\leq p} \left\{ b_k + \Bigl(f(x+c)- \max\bigl(f(x), a_k\bigr)\Bigr) \right\} \\
&\leq & (p-1)\max_{1\leq k\leq p} a_k
   + \bigoplus_{k=1}^p \Bigl(f(x+c)\oslash \bigl(f(x)\oplus a_k\bigr)\Bigr)\,.
\end{eqnarray*}
By monotonicity of $m(r,\ast)$, the asserted inequality follows from
the definition of the proximity function.
\end{proof}

\begin{remark} Observe that $f(x+c)\oslash \bigl(f(x)\oplus a_k\bigr)\leq
f(x+c)\oslash f(x)$ for each~$k$.
\end{remark}

\begin{lemma} \label{lemma3}
For any $p\in\mathbb{N}$ and any $a_k\in\mathbb{R}$ $(1\leq k\leq
p)$, we have
\begin{eqnarray*}
& & \hspace{-1cm} T\biggl(r,1_{\circ}\oslash \bigl(\bigotimes_{k=1}^p (f\oplus a_k)\bigr)\biggr) \\
&=&m\biggl(r, 1_{\circ}\oslash \bigl(\otimes_{k=1}^p (f\oplus
a_k)\bigr)\biggr)+ N\biggl(r,1_{\circ}\oslash
\bigl(\bigotimes_{k=1}^p (f\oplus a_k)\bigr)\biggr) \\
&=&T\Bigl(r,\bigotimes_{k=1}^p (f\oplus a_k)\Bigr)
-\bigotimes_{k=1}^p \bigl(f(0)\oplus a_k\bigr)\,.
\end{eqnarray*}
\end{lemma}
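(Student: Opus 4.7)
The plan is to set $g := \bigotimes_{k=1}^{p}(f\oplus a_{k}) = \sum_{k=1}^{p}\max(f,a_{k})$ and observe that $g$ is tropical meromorphic, being a finite tropical product (ordinary sum) of the functions $\max(f,a_{k})$, each of which is a continuous piecewise linear function from $\mathbb{R}$ to $\mathbb{R}$. Consequently $1_{\circ}\oslash g = -g$ is also tropical meromorphic, and the basic Nevanlinna quantities $m$, $N$, $T$ are defined for it.

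The first equality in the statement is then simply the definition $T(r,h)=m(r,h)+N(r,h)$ applied to $h:=1_{\circ}\oslash g$; nothing else needs to be said there. For the second equality I would invoke the tropical Jensen formula \eqref{jensen}, which reads $T(r,h)-T(r,-h)=h(0)$. Applying it with $h:=g$ rearranges to
$$
T(r,-g)=T(r,g)-g(0).
$$
Finally I would evaluate $g(0)=\sum_{k=1}^{p}\max(f(0),a_{k})=\bigotimes_{k=1}^{p}\bigl(f(0)\oplus a_{k}\bigr)$, which matches the right-hand side of the claim. There is no real obstacle here, since the lemma reduces, via the tropical meromorphy of $g$, to a direct application of the Jensen formula already established in Theorem~\ref{PJ}; the only mild point worth checking is that \eqref{jensen} was proved for arbitrary tropical meromorphic functions (not just for $f$ itself), which is clear from its derivation as the $x=0$ case of the Poisson--Jensen formula.
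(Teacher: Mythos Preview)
Your proposal is correct and follows essentially the same approach as the paper: define $F(x):=\bigotimes_{k=1}^p (f\oplus a_k)$, apply the tropical Jensen formula \eqref{jensen} to $F$, and note that $F(0)=\bigotimes_{k=1}^p \bigl(f(0)\oplus a_k\bigr)$. Your write-up is in fact slightly more detailed than the paper's, which simply states that the Jensen formula applied to $F$ gives the identity.
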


\begin{proof} We may apply the tropical Jensen formula to the function
$F(x):=\bigotimes_{k=1}^p (f\oplus a_k)$ to obtain the above
identity. Note that
$$
F(0)=\bigotimes_{k=1}^p \bigl(f(0)\oplus a_k\bigr)=\sum_{k=1}^p \max
\bigl(f(0),  a_k\bigr).
$$
\end{proof}

\begin{lemma} \label{lemma4}
For any $p\in\mathbb{N}$ and any $a_k\in\mathbb{R}$ $(1\leq k\leq
p)$, we have
$$
N\biggl(r, 1_{\circ}\oslash \bigl(\bigotimes_{k=1}^p (f\oplus
a_k)\bigr)\biggr) \leq \sum_{k=1}^p N\bigl(r, 1_{\circ}\oslash
(f\oplus a_k)\bigr)\,.
$$
\end{lemma}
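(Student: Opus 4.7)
The plan is to translate the statement back into ordinary notation and reduce it to the elementary pointwise bound $(y_1+\cdots+y_p)^+ \leq y_1^+ + \cdots + y_p^+$ applied to the slope-jump functional $\omega$.

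First I would set $F(x) := \bigotimes_{k=1}^{p}(f\oplus a_k) = \sum_{k=1}^{p}\max\bigl(f(x),a_k\bigr)$, so that the asserted inequality reads
$$ N(r,-F) \leq \sum_{k=1}^{p} N\bigl(r,-\max(f,a_k)\bigr). $$
For any tropical meromorphic function $g$, a pole of $-g$ at a point $x$ is exactly a zero of $g$ at $x$, with multiplicity $\omega_g(x)>0$. Since slope discontinuities have no accumulation point in $\mathbb{R}$, the counting function formula (\ref{count}) can then be written uniformly as
$$ N(r,-g) = \frac{1}{2}\sum_{|x|<r} \omega_g(x)^+ \,(r-|x|), $$
where $y^+:=\max(y,0)$ and only finitely many terms are nonzero.

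Next I would exploit additivity of the slope-jump under pointwise sums: for any two tropical meromorphic functions $g_1,g_2$, the one-sided derivatives add, so $\omega_{g_1+g_2}(x)=\omega_{g_1}(x)+\omega_{g_2}(x)$ at every $x\in\mathbb{R}$ (with $\omega=0$ at points of smoothness). Iterating this on $F=\sum_{k}\max(f,a_k)$ yields
$$ \omega_F(x) = \sum_{k=1}^{p}\omega_{\max(f,a_k)}(x) \qquad (x\in\mathbb{R}), $$
so in particular the discontinuity set of $F'$ is contained in the union of the discontinuity sets of the individual $\max(f,a_k)'$.

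Finally I would apply the inequality $\bigl(\sum_{k}y_k\bigr)^+ \leq \sum_{k}y_k^+$, which is immediate (if $\sum_k y_k\leq 0$ the left side is $0$; otherwise $\sum_k y_k \leq \sum_k y_k^+$), with $y_k:=\omega_{\max(f,a_k)}(x)$. This gives
$$ \omega_F(x)^+ \leq \sum_{k=1}^{p}\omega_{\max(f,a_k)}(x)^+ \qquad (x\in\mathbb{R}). $$
Multiplying by $(r-|x|)/2 \geq 0$ and summing over $|x|<r$ delivers the lemma. The only subtlety, and the sole potential obstacle, is that positive slope-jumps of different summands may conspire to cancel in $F$, so a naive pointwise identity between multiplicities fails; but this is exactly the loss encoded by the positive-part inequality above, and there is no further arithmetic to perform.
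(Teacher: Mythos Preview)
Your proof is correct and follows essentially the same route as the paper: both arguments rest on the additivity $\omega_{g+h}=\omega_g+\omega_h$ together with the pointwise bound $\max\{\sum_k y_k,0\}\leq \sum_k\max\{y_k,0\}$, then pass to $N$ by summing against the weight $(r-|x|)/2$. The only cosmetic difference is that the paper first records the general inequality $N(r,g\otimes h)\leq N(r,g)+N(r,h)$ and then observes $1_\circ\oslash\bigl(\bigotimes_k(f\oplus a_k)\bigr)=\bigotimes_k\bigl(1_\circ\oslash(f\oplus a_k)\bigr)$, whereas you work directly with $\omega_F$ for $F=\sum_k\max(f,a_k)$.
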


\begin{proof}
First, recall the linearity of $\omega_f(x)$ at each point $x$ with
respect to $f$, that is,
$$
\omega_{g+h}(x)=\omega_g(x)+\omega_h(x)\,.
$$
Therefore,
$$
\max\{\omega_{g+h}(x), 0\}\leq \max\{\omega_g(x),
0\}+\max\{\omega_h(x), 0\}
$$
for any $x\in \mathbb{R}$, and so $n(t, g+h) \leq n(t, g)+n(t, h)$
for any $t>0$. Hence,
$$
N(r, g\otimes h) \leq N(r, g)+N(r, h)
$$
holds for any $r>0$. The desired inequality now follows from
$$
1_{\circ}\oslash \bigl(\bigotimes_{k=1}^p (f\oplus a_k)\bigr)
=-\sum_{k=1}^p (f\oplus a_k) = \sum_{k=1}^p \bigl\{1_{\circ}\oslash
(f\oplus a_k)\bigr\} = \bigotimes_{k=1}^p \bigl\{1_{\circ}\oslash
(f\oplus a_k)\bigr\} \,.
$$
\end{proof}

\begin{lemma} \label{lemma5}
For any $p\in\mathbb{N}$ and any $a_k\in\mathbb{R}$ $(1\leq k\leq
p)$, we have
$$
T\bigl(r, \bigotimes_{k=1}^p (f\oplus a_k)\bigr)\leq p\, T(r,f)+
\sum_{k=1}^p \max(a_k, 0)\,.
$$
\end{lemma}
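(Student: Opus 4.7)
The plan is to reduce the inequality to the one-factor case via the subadditivity of $T$, and then bound each factor $T(r, f\oplus a_k)$ by combining the tropical Jensen formula with Theorem~\ref{FMT}.

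First, I would invoke part (iii) of Lemma~\ref{elem} repeatedly to get
$$T\Bigl(r, \bigotimes_{k=1}^p (f\oplus a_k)\Bigr) \leq \sum_{k=1}^p T(r, f\oplus a_k),$$
so that it suffices to prove $T(r, f\oplus a_k)\leq T(r,f)+\max(a_k,0)$ for each individual $k$.

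Next, for a fixed real number $a$, I would apply Theorem~\ref{FMT} to estimate $T(r,-\max(f,a))\leq T(r,f)+\max(a,0)-\max(f(0),a)$, and combine this with the tropical Jensen formula (\ref{jensen}) applied to $g:=\max(f,a)$, which gives $T(r,\max(f,a))=T(r,-\max(f,a))+\max(f(0),a)$. Substituting the Jensen identity into the Theorem~\ref{FMT} bound yields
$$T(r, f\oplus a)=T(r,\max(f,a))\leq T(r,f)+\max(a,0),$$
which is precisely the per-factor estimate needed.

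Summing the per-factor estimate over $k=1,\ldots,p$ immediately produces the asserted bound
$$T\Bigl(r, \bigotimes_{k=1}^p (f\oplus a_k)\Bigr)\leq p\, T(r,f)+\sum_{k=1}^p \max(a_k,0).$$
There is no substantial obstacle here: everything follows mechanically from tools already established in Section~\ref{Nlinna}. The only mild subtlety is that one cannot simply monotone-compare $f\oplus a$ to $f$ at the level of $N(r,\cdot)$ (as the remark after Lemma~\ref{elem} warns), so it is essential to route the argument through $T(r,-\max(f,a))$ via the first main theorem and then recover $T(r,\max(f,a))$ through Jensen, rather than attempting a direct pointwise comparison.
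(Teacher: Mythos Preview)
Your proof is correct and follows the same outline as the paper's: split the tropical product via $T(r,g\otimes h)\leq T(r,g)+T(r,h)$, then bound each factor $T(r,f\oplus a_k)$. For that per-factor step the paper simply invokes the companion inequality $T(r,g\oplus h)\leq T(r,g)+T(r,h)$ together with $T(r,a_k)=\max(a_k,0)$, so your route through Theorem~\ref{FMT} and the Jensen formula---while correct---is an unnecessary detour (indeed, the first inequality in Theorem~\ref{FMT} is itself derived from precisely this $\oplus$-subadditivity, so no genuine subtlety is being circumvented).
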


\begin{proof}
A straightforward reasoning by using $T(r,g\otimes h)\leq T(r,
g)+T(r, h)$ and $T(r,g\oplus  h)\leq T(r, g)+T(r, h)$ directly
confirms that
\begin{eqnarray*}
T\bigl(r, \bigotimes_{k=1}^p (f\oplus a_k)\bigr)
&\leq & \sum_{k=1}^p T(r, f\oplus a_k)\\
&\leq & \sum_{k=1}^p \bigl\{T(r, f) + T(r, a_k)\bigr\}\\
&=& p\, T(r,f) + \sum_{k=1}^p \max(a_k, 0)\,,
\end{eqnarray*}
since $T(r,a)=m(r,a)=\max(a, 0)$ for any constant $a\in \mathbb{R}$.
\end{proof}

\vspace{1ex}

In order to show a related reversed inequality to Lemma
\ref{lemma5}, we first prove the following

\begin{lemma} \label{lemma6}
For any $p\in\mathbb{N}$ and any $a_k\in\mathbb{R}$ $(1\leq k\leq
p)$, we have
$$
\max \left\{ \sum_{k=1}^p \max (f, a_k)\bigr), p\, \max (a_1, \ldots
, a_p) \right\} = p\, \max\bigl(f, \max (a_1, \ldots , a_p)
\bigr)\,,
$$
that is,
$$
\biggl(\bigotimes_{k=1}^p (f\oplus a_k)\biggr) \oplus
\biggl(\bigoplus_{k=1}^p a_k\biggr)^{\otimes p} =\biggl(f\oplus
\bigl(\bigoplus_{k=1}^p a_k\bigr)\biggr)^{\otimes p}\,.
$$
\end{lemma}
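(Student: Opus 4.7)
The identity is pointwise in $x$, so I would fix $x\in\mathbb{R}$, write $y := f(x)$ and $A := \max(a_1,\ldots,a_p)$, and reduce the lemma to the elementary scalar identity
$$\max\left\{\sum_{k=1}^p \max(y, a_k),\; pA\right\} = p\,\max(y, A).$$
(The tropical rewrite then follows by unfolding $\otimes$ and $\oplus$.)

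For the inequality $\leq$, the plan is to use the obvious bound $\max(y,a_k)\leq\max(y,A)$ for every $k$, since $a_k\leq A$; summing over $k$ gives $\sum_{k=1}^p\max(y,a_k)\leq p\max(y,A)$. Trivially $pA\leq p\max(y,A)$ as well, so taking the maximum of the two left-hand quantities preserves the bound.

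For the reverse inequality $\geq$, I would split on the sign of $y-A$. If $y\geq A$, then $\max(y,a_k)=y$ for every $k$ (because $a_k\leq A\leq y$), so $\sum_{k=1}^p \max(y,a_k)=py=p\max(y,A)$, and the first term inside the LHS maximum already matches the RHS. If $y<A$, then $p\max(y,A)=pA$, which is exactly the second term inside the LHS maximum. In either case the RHS is realized by one of the two arguments of the outer maximum, so the LHS dominates.

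I do not expect any genuine obstacle: the statement is a purely combinatorial max-identity and the only subtlety is that neither of the two terms inside the LHS maximum dominates the RHS on its own — one handles the regime $y\geq A$ and the other handles $y<A$ — so both terms are genuinely needed, which is precisely what the two-case argument records.
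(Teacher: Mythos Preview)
Your proposal is correct and follows essentially the same approach as the paper: both arguments reduce to the case split on whether $f$ (your $y$) exceeds $A=\max_k a_k$, observing that in the first case the sum $\sum_k\max(f,a_k)$ equals $pf$ and dominates, while in the second case $pA$ dominates. Your version is merely more explicit in recording the easy inequality $\leq$ separately before handling $\geq$ by cases; the paper compresses both directions into the single observation of which term realizes the outer maximum in each regime.
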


\begin{proof}
If $f\leq \max (a_1, \ldots , a_p)$, then
$$
\sum_{k=1}^p \max (f, a_k) \leq  p\, \max (a_1, \ldots , a_p),
$$
while if $f> \max (a_1, \ldots , a_p)$, then
$$
\sum_{k=1}^p \max (f, a_k) \geq p\, \max (a_1, \ldots , a_p)
$$
The assertion immediately follows.
\end{proof}

\vspace{1ex}

As an application of Lemma \ref{lemma6}, we obtain

\begin{lemma} \label{lemma7}
For any $p\in\mathbb{N}$ and any $a_k\in\mathbb{R}$ $(1\leq k\leq
p)$ with all $a_k<L_f$, we have
$$
T\bigl(r, \bigotimes_{k=1}^p (f\oplus a_k)\bigr)\geq p\,  T(r,f) -
p\, \max_{1\leq k \leq p} \max(a_k, 0) \,.
$$
\end{lemma}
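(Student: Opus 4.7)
The plan is to deduce this lower bound directly from Lemma~\ref{lemma6}, the first main theorem (Theorem~\ref{FMT}), and the elementary estimates of Lemma~\ref{elem}.

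First, I would set $A := \bigoplus_{k=1}^p a_k = \max(a_1,\ldots,a_p)$, so that the hypothesis $a_k < L_f$ for all $k$ is equivalent to $A < L_f$. A key elementary observation is that $\max(A,0) = \max_{1\leq k\leq p}\max(a_k,0)$, which will make the bound on the right-hand side match the one claimed. Lemma~\ref{lemma6} then gives the identity
$$
\Bigl(\bigotimes_{k=1}^p (f\oplus a_k)\Bigr)\oplus A^{\otimes p} = (f\oplus A)^{\otimes p}.
$$

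Next I would apply $T(r,\cdot)$ to both sides. By Lemma~\ref{elem}(ii), the right-hand side has characteristic exactly $p\,T(r,f\oplus A)$. For the left-hand side, Lemma~\ref{elem}(iii) combined with $T(r,A^{\otimes p}) = T(r,pA) = \max(pA,0) = p\max(A,0)$ yields
$$
p\,T(r,f\oplus A) \;\leq\; T\bigl(r,\bigotimes_{k=1}^p (f\oplus a_k)\bigr) + p\max(A,0),
$$
so that
$$
T\bigl(r,\bigotimes_{k=1}^p (f\oplus a_k)\bigr) \;\geq\; p\,T(r,f\oplus A) - p\max(A,0).
$$

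It remains to show $T(r,f\oplus A)\geq T(r,f)$, and this is where the assumption $A<L_f$ enters. Since $A<L_f$, at every pole $b$ of $f$ the value $f(b)$ exceeds $A$ in a neighborhood of $b$, so $\max(f,A)$ coincides with $f$ near $b$ and retains the same pole with the same multiplicity; hence $N(r,f\oplus A)=N(r,f)$. On the other hand $f\oplus A = \max(f,A)\geq f$, so Lemma~\ref{elem}(i) gives $m(r,f\oplus A)\geq m(r,f)$. Adding these yields $T(r,f\oplus A)\geq T(r,f)$, and substituting above yields exactly the claimed
$$
T\bigl(r,\bigotimes_{k=1}^p (f\oplus a_k)\bigr)\;\geq\; p\,T(r,f) - p\max_{1\leq k\leq p}\max(a_k,0).
$$
(Alternatively, one could derive $T(r,f\oplus A)\geq T(r,f)$ as a consequence of the asymptotic equality in Theorem~\ref{FMT} combined with Jensen's formula; both routes are short.) The only mild obstacle is the pole-preservation argument for $N(r,f\oplus A)=N(r,f)$, which is precisely why the hypothesis $A<L_f$ is indispensable—otherwise $f\oplus A$ could truncate some poles of $f$ and the sought-after inequality would not follow from $f\oplus A\geq f$ alone, since $N(r,\cdot)$ is not monotone in this sense (cf.\ the Remark after Lemma~\ref{elem}).
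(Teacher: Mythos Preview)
Your argument is correct and follows essentially the same route as the paper's own proof: both apply Lemma~\ref{lemma6}, use $T(r,g^{\otimes p})=p\,T(r,g)$ on the right and $T(r,g\oplus h)\leq T(r,g)+T(r,h)$ on the left, and then establish $T(r,f\oplus A)\geq T(r,f)$ via $m(r,f\oplus A)\geq m(r,f)$ and $N(r,f\oplus A)=N(r,f)$ from $A<L_f$. One small citation slip: the step bounding the left-hand side needs the $\oplus$-subadditivity $T(r,g\oplus h)\leq T(r,g)+T(r,h)$ (as used, e.g., in the proofs of Theorem~\ref{FMT} and Lemma~\ref{lemma5}), not Lemma~\ref{elem}(iii), which is the $\otimes$ version.
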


\begin{proof} By Lemma~\ref{lemma6} together with $T(r,g^{\otimes p})=p\,
T(r,g)$, we have
$$
T\left(r, \bigl(\bigotimes_{k=1}^p (f\oplus a_k)\bigr) \oplus
\bigl(\bigoplus_{k=1}^p a_k\bigr)^{\otimes p}\right) =p\, T\biggl(r,
f\oplus \bigl( \bigoplus_{k=1}^p a_k \bigr)\biggr)\,.
$$
Clearly,
$$p\, T\biggl(r,f\oplus \bigl( \bigoplus_{k=1}^p a_k \bigr)\biggr)\geq pT(r,f).$$
In fact, since $f\oplus \bigl( \bigoplus_{k=1}^p
a_k\bigr)=\max\bigl\{f, \bigoplus_{k=1}^p a_k \bigr\}\geq f$, we see
that
$$
m\biggl(r, f\oplus \bigl( \bigoplus_{k=1}^p a_k \bigr)\biggr)\geq
m(r,f)\,,
$$
while
$$
N\biggl(r, f\oplus \bigl( \bigoplus_{k=1}^p a_k \bigr)\biggr)=N(r,f)
$$
holds, since $\bigoplus_{k=1}^p a_k=\max(a_1, \ldots , a_p)<L_f$.

\medskip

On the other hand, since $T(r, g\oplus h)\leq T(r,g)+T(r, h)$, we
have
$$
T\left(r, \bigl(\bigotimes_{k=1}^p (f\oplus a_k)\bigr) \oplus
\bigl(\bigoplus_{k=1}^p a_k\bigr)^{\otimes p}\right) \leq T\biggl(r,
\bigotimes_{k=1}^p (f\oplus a_k)\biggr)  + p\, T\biggl(r,
\bigoplus_{k=1}^p a_k\biggr)\,.
$$
Recalling again
$$
T\biggl(r, \bigoplus_{k=1}^p a_k\biggr)=\max\bigl((\max_{k=1}^p
a_k), 0)\bigr)=\max_{1\leq k \leq p} \max(a_k, 0),
$$
we obtain
$$
T\biggl(r, \bigotimes_{k=1}^p (f\oplus a_k)\biggr)  + p\,
\max_{1\leq k \leq p} \max(a_k, 0) \geq p\, T(r,f)\,,
$$
as desired.
\end{proof}

\begin{remark}
We now have the estimate
$$
\left| T\bigl(r, \bigotimes_{k=1}^p (f\oplus a_k)\bigr) - p\, T(r,f)
\right| \leq p\, \max_{k=1}^p \max(a_k,0)\,.
$$
under the assumption $\max(a_1, \ldots , a_p)<L_f$. Therefore,
$T\bigl(r, \bigotimes_{k=1}^p (f\oplus a_k)\bigr) = p\, T(r,f)$
whenever $a_{k}\leq 0$ for each $k=1,\ldots ,p$. This may seem a bit
curious. Recall, however, the assumption $a_k<L_f$ and its strong
consequence $N(r, f\oplus a_k)=N(r,f)$ for each $k$.
\end{remark}

{\it Proof of Theorem~\ref{thm:second}.} It follows by combining
Lemmas~\ref{lemma1}, \ref{lemma2} and \ref{lemma3} - \ref{lemma7}
above that
\begin{multline*}
T\bigl(r, f(x+c)\bigr) - N\bigl(r, 1_{\circ}\oslash f(x+c)\bigr)
+ m\bigl(r,f(x+c)\oslash f(x)\bigr) - f(c) \\
\geq \ pT(r,f) - \sum_{k=1}^p N\bigl(r, 1_{\circ}\oslash (f\oplus a_k)\bigr) \\
\qquad -(2p-1)\max_{1\leq k\leq p} \max(a_k, 0) - \sum_{k=1}^p \max
\bigl(f(0),  a_k\bigr)\,,
\end{multline*}
and therefore
\begin{multline} \label{eqn:SMT}
pT(r,f) \leq
\sum_{k=1}^p N\bigl(r, 1_{\circ}\oslash (f\oplus a_k)\bigr) + T\bigl(r, f(x+c)\bigr) \\
\qquad - N\bigl(r, 1_{\circ}\oslash f(x+c)\bigr)
+ m\bigl(r,f(x+c)\oslash f(x)\bigr) - f(c) \\
+(2p-1)\max_{1\leq k\leq p} \max(a_k, 0) + \sum_{k=1}^p \max
\bigl(f(0), a_k\bigr)\,,
\end{multline}
completing the proof.

\section{Valiron--Mohon'ko, Mohon'ko and Clunie lemmas}\label{VMC}

In this section we prove slightly extended versions of three results
from the restricted tropical setting of integer slopes, see
\cite{LY2}. These results are counterparts, in some sense, of three
classical lemmas, frequently used in applications of Nevanlinna
theory. As for the classical background, we first recall a lemma due
to Valiron and Mohon'ko, see e.g. \cite{L}, Theorem 2.2.5, and
another one due to Mohon'ko, see e.g. \cite{L}, Proposition 9.2.3.
In the restricted tropical setting of integer slopes, we refer to
\cite{LY2}.

\medskip

Before proceeding to formulate these results in a slightly extended
setting, we need to define a tropical difference {\it Laurent}
polynomials in a tropical function and its shifts. This notion is a
slight generalization of a difference polynomial considered in
\cite{L} and \cite{LY2} in the sense that exponents can be negative
and real. Let $\lambda =(\lambda_{0}, \lambda_{1}, \ldots
,\lambda_{m})$ be a multi-index of real numbers, and consider
\begin{eqnarray*}
f(x \uplus c)^{\otimes \lambda}
&:=&\bigotimes_{j=0}^{m} f(x+c_{j})^{\otimes \lambda_{j}}\\
&=&\lambda_{0}f(x)+\lambda_{1}f(x+c_{1})+\cdots
+\lambda_{m}f(x+c_{m})
\end{eqnarray*}
with given shifts $c_{1},\ldots ,c_{m}$ in $[0,+\infty )$. Also we
will write
$$
\bigl\{ f(x \uplus c) \oslash f(x) \bigr\}^{\otimes \lambda}
:=\bigotimes_{j=1}^{m} \bigl\{f(x+c_{j})-f(x)\bigr\}^{\otimes
\lambda_{j}}
$$

Then, an expression of the form
$$
P(x,f)
=\bigoplus_{\lambda\in\Lambda}^{} a_{\lambda}(x)\otimes f(x\uplus c)^{\otimes \lambda} \\
=\max_{\lambda\in\Lambda} \Bigl\{ a_{\lambda}(x) + \sum_{j=0}^{m}
\lambda_j f(x+c_j) \Bigr\}
$$
with tropical meromorphic coefficients $a_{\lambda}(x)$ $(\lambda\in
\Lambda)$ over a finite set $\Lambda=\Lambda[P]$ of real indices, is
called a tropical difference Laurent polynomial of total degree
$$
\deg (P):=\max_{\lambda\in\Lambda[P]} \|\lambda\| \ (\in \mathbb{R})
$$
in $f$ and its shifts, with $\|\lambda\|:=\lambda_{0}+\cdots
+\lambda_{m}$. We also denote
$$
\Omega[P](x):=\bigoplus_{\lambda\in \Lambda[P]} a_{\lambda}(x)
=\max_{\lambda\in \Lambda[P]} a_{\lambda}(x),
$$
and
$$
\overline{\Omega}[P](x):=\bigoplus_{\lambda\in \Lambda[P]}
\bigl(-a_{\lambda}(x)\bigr) =\max_{\lambda\in \Lambda[P]}
\bigl(-a_{\lambda}(x)\bigr).
$$
Note that $\overline{\Omega}[P](x)$ does not coincide with
$\Omega[1_{\circ}\oslash P](x)$, since
\begin{eqnarray*}
1_{\circ}\oslash P(x,f)
&=&-\bigoplus_{\lambda\in\Lambda}^{} a_{\lambda}(x)\otimes f(x\uplus c)^{\otimes \lambda}\\
&=&\min_{\lambda\in\Lambda}\Bigl(-a_{\lambda}(x) - \sum_{j=0}^{m}
\lambda_j f(x+c_j) \Bigr).
\end{eqnarray*}
In what follows, we also have a need to consider leading
coefficients in in a difference Laurent polynomial. To this end, we
denote
$$
\Upsilon [P](x) :=\bigoplus_{\lambda\in \hat{\Lambda}[P]}
a_{\lambda}(x) =\max_{\lambda\in \hat{\Lambda}[P]} a_{\lambda}(x),
$$
where $\hat{\Lambda}[P]:=\{ \lambda\in\Lambda[P] \, : \,
\|\lambda\|=\deg(P)\}$. The notation $\bigoplus$ is used to
emphasizing that the sum in question stands for a tropical sum.

\medskip

In this section, we complement previous results by assuming that $f$
is a tropical meromorphic function of hyper-order $\rho_{2}<1$.
Recalling Lemma \ref{technical} and Proposition \ref{LLDh}, we say,
in the case of this special situation that the coefficients
$a_{\lambda}$ of a tropical difference Laurent polynomial are small
(in the tropical sense) with respect to~$f$, if
$T(r,a_{\lambda})=S_{\delta}(r,f)$ holds with a quantity
$S_{\delta}(r,f)=o\bigl(T(r,f)/r^{\delta}\bigr)$ outside of a set of
finite logarithmic measure, where $0<\delta <1-\rho_{2}$.

\medskip

The proofs in this section rely on the notion of the proximity
function only, in addition to completely elementary analysis.
Therefore, the proofs in~\cite{LY2} basically carry over to the
present situation, with natural modifications. However, due to the
change from difference polynomials to difference Laurent
polynomials, we repeat the key points of the proofs here, for the
convenience of the reader.

\begin{proposition}\label{Ptof} Suppose that $f$ is tropical meromorphic and
a tropical difference Laurent polynomial $P(x,f)$ has a term
$a_{\lambda}(x)\otimes f(x\uplus c)^{\otimes \lambda}$ with $\|
\lambda\|>0$. Then, putting $\lambda_j^+=\max(\lambda_j,0)$ and
$\lambda_j^-=\max(-\lambda_j,0)$, we have
\begin{eqnarray*}
& & \hspace{-7ex} \| \lambda \| m(r,f) \\
&\leq & \sum_{j=1}^m \left\{ \lambda_j^+ m\bigl(r, f(x)\oslash
f(x+c_j)\bigr)
             + \lambda_j^- m\bigl(r, f(x+c_j)\oslash f(x)\bigr) \right\} \\
& & \quad  +m(r, 1_{\circ}\oslash a_{\lambda}) + m\bigl(r,
P(x,f)\bigr).
\end{eqnarray*}
In particular, if $f$ is of hyper-order $\rho_{2}<1$ and both
$m(r,1_{\circ}\oslash a_{\lambda})$ and $m(r,P(x,f))$ are of
$S_{\delta}(r,f)$, then $m(r,f)=S_{\delta}(r,f)$.
\end{proposition}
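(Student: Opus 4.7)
The plan is to turn the tropical inequality $P(x,f)\ge a_\lambda(x)+\sum_{j=0}^{m}\lambda_j f(x+c_j)$, which holds for the distinguished index $\lambda$ by definition of the tropical max, into a pointwise bound for $\|\lambda\|f(x)$ in terms of $P$, $a_\lambda$, and shift-quotients $f(x+c_j)\oslash f(x)$; then pass from the pointwise estimate to proximity functions by taking positive parts at $\pm r$ and averaging.

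Concretely, first I would observe the algebraic identity
\[
\sum_{j=0}^{m}\lambda_j f(x+c_j)=\|\lambda\|\,f(x)+\sum_{j=1}^{m}\lambda_j\bigl(f(x+c_j)-f(x)\bigr),
\]
so that the basic inequality for $P$ rearranges to
\[
\|\lambda\|\,f(x)\le P(x,f)+\bigl(-a_\lambda(x)\bigr)+\sum_{j=1}^{m}\bigl\{\lambda_j^{+}(f(x)-f(x+c_j))+\lambda_j^{-}(f(x+c_j)-f(x))\bigr\},
\]
using $\lambda_j=\lambda_j^{+}-\lambda_j^{-}$ to separate the two signs of shift-quotients.

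Next, using the elementary facts that $g\le h\Rightarrow g^{+}\le h^{+}$, that $(g_1+\cdots+g_n)^{+}\le g_1^{+}+\cdots+g_n^{+}$, and that $(cg)^{+}=cg^{+}$ for $c\ge 0$, I take positive parts and then evaluate at $x=r$ and $x=-r$, finally averaging. Since $\|\lambda\|>0$, the left side becomes exactly $\|\lambda\|\,m(r,f)$, and each summand on the right matches, term by term, the asserted proximity contributions $\lambda_j^{+}m(r,f(x)\oslash f(x+c_j))$, $\lambda_j^{-}m(r,f(x+c_j)\oslash f(x))$, $m(r,1_{\circ}\oslash a_\lambda)$ and $m(r,P(x,f))$, since $-a_\lambda=1_{\circ}\oslash a_\lambda$.

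For the ``in particular'' clause, divide the established estimate by $\|\lambda\|>0$, note that $m\bigl(r,f(x)\oslash f(x+c_j)\bigr)=m\bigl(r,g(x+(-c_j))\oslash g(x)\bigr)$ after the trivial shift $g(x):=f(x+c_j)$ with $\rho_2(g)=\rho_2(f)$, and invoke Proposition \ref{LLDh} to bound each shift-quotient proximity by $S_\delta(r,f)$; combined with the hypotheses on $a_\lambda$ and on $P$, the right-hand side collapses to $S_\delta(r,f)$, yielding $m(r,f)=S_\delta(r,f)$. I expect no serious obstacle: the only care required is the careful bookkeeping of positive parts (one must use the subadditivity $(\cdot)^{+}$ rather than any identity) and the verification that $\|\lambda\|f(x)^{+}=(\|\lambda\|f(x))^{+}$, which is precisely where the assumption $\|\lambda\|>0$ is needed.
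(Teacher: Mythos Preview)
Your proposal is correct and follows essentially the same route as the paper: both arguments rewrite $\sum_{j}\lambda_j f(x+c_j)$ as $\|\lambda\|f(x)$ plus shift-differences, use $a_\lambda(x)+\sum_j\lambda_j f(x+c_j)\le P(x,f)$, split via $\lambda_j=\lambda_j^{+}-\lambda_j^{-}$, and then pass to proximity functions by subadditivity of positive parts (the paper phrases this last step as $m(r,g+h)\le m(r,g)+m(r,h)$ and monotonicity of $m$, which amounts to the same thing). Your explicit remark on where $\|\lambda\|>0$ enters, and your handling of the reversed quotient $m\bigl(r,f(x)\oslash f(x+c_j)\bigr)$ via a shift before invoking Proposition~\ref{LLDh}, are fine; the paper simply cites Proposition~\ref{LLDh} without spelling this out.
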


\begin{proof} First we have
\begin{eqnarray*}
& & \hspace{-7ex} \|\lambda\|m(r,f) \\
&=&m(r, \|\lambda\| f) \\
&=&m\Bigl(r, \sum_{j=0}^m \lambda_j \bigl(f(x)-f(x+c_j)\bigr)
        +\sum_{j=0}^m \lambda_j f(x+c_j)+a_{\lambda}(x)-a_{\lambda}(x)\Bigr)\\
&\leq& m\Bigl(r, \sum_{j=0}^m \lambda_j
\bigl(f(x)-f(x+c_j)\bigr)\Bigr)
         + m\Bigl(r, a_{\lambda}(x)+\sum_{j=0}^m \lambda_j f(x+c_j)\Bigr) \\
& & \quad +m(r, -a_{\lambda}).
\end{eqnarray*}
Since $\lambda_j=\lambda_j^+ - \lambda_j^-$ $(0\leq j\leq m)$, the
sum $\sum_{j=0}^m \lambda_j \bigl(f(x)-f(x+c_j)\bigr)$ can be
written as
$$
\sum_{j=0}^m \Bigl\{ \lambda_j^+ \bigl(f(x)\oslash f(x+c_j)\bigr)
 + \lambda_j^- \bigl(f(x+c_j)\oslash f(x)\bigr) \Bigr\}.
$$
Further, $a_{\lambda}(x)+\sum_{j=0}^m \lambda_j f(x+c_j) \leq
P(x,f)$ by the definition of the tropical difference Laurent
polynomial
$$
P(x,f) =\max_{\lambda\in\Lambda}
  \Bigl\{ a_{\lambda}(x) + \sum_{j=1}^m \lambda_j f(x+c_j) \Bigr\}.
$$
The desired inequality now immediately follows from the previous
observations. The special case of hyper-order $< 1$ is an immediate
consequence, recalling only Proposition \ref{LLDh}.
\end{proof}

The following theorem may be understood as a partial tropical
counterpart of the classical Valiron--Mohon'ko lemma, see
\cite{LY2}, Theorem 2.3:

\begin{theorem}\label{polyVM} Given a tropical meromorphic function $f$ and
its tropical difference Laurent polynomial
$$
P(x,f) =\bigoplus_{\lambda\in\Lambda}
  \Bigl\{ a_{\lambda}(x)\otimes f(x\uplus c)^{\otimes\lambda} \Bigr\}.
$$
Then
\begin{eqnarray*}
& & \hspace{-5ex} \left|m\bigl(r, P(x,f)\bigr) - m \bigl(r, \deg(P)f(x)\bigr)\right| \\
&\leq& \max\left\{m\bigl(r, \Omega[P](x)\bigr) +m\Bigl(r,
\max_{\lambda\in\Lambda}
       \bigl(f(x\uplus c)\oslash f(x) \bigr)^{\otimes  \lambda} \Bigr), \right.\\
& &  \qquad
    \left. m\bigl(r, \overline{\Omega}[P](x)\bigr)
 +m\Bigl(r, \max_{\lambda\in\Lambda}
       \bigl(f(x\uplus c)\oslash f(x) \bigr)^{\otimes  (-\lambda)} \Bigr)\right\}
\end{eqnarray*}

In particular, if $f$ is of hyper-order $\rho_{2}<1$,  $\deg (P)>0$
and the coefficients of $P(x,f)$ are all small with respect to $f$,
then
$$
m\bigl(r,P(z,f)\bigr)=\deg(P) m(r,f)+S_{\delta}(r,f).
$$
\end{theorem}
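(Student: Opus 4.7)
The plan is to split the two-sided estimate into two one-sided bounds, namely
\[
m(r, df) \leq m(r, P) + B \quad\text{and}\quad m(r, P) \leq m(r, df) + A,
\]
where $d := \deg(P)$ and $A$, $B$ denote the two quantities inside the $\max$ on the right-hand side of the claimed inequality. Their combination will yield $|m(r,P) - m(r,df)| \leq \max\{A,B\}$, and the ``in particular'' conclusion will then follow by invoking Proposition~\ref{LLDh} to show that both $A$ and $B$ reduce to $S_{\delta}(r,f)$.

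For the first one-sided bound, I would single out any leading multi-index $\lambda^{*} \in \hat{\Lambda}[P]$, so that $\|\lambda^{*}\| = d$. The trivial inequality $P(x,f) \geq a_{\lambda^{*}}(x) + \sum_{j=0}^{m} \lambda^{*}_{j} f(x+c_{j})$ follows from the $\max$ definition of $P$. Expanding
\[
\sum_{j=0}^{m} \lambda^{*}_{j} f(x+c_{j}) = d\, f(x) + \sum_{j=1}^{m} \lambda^{*}_{j}\bigl(f(x+c_{j}) - f(x)\bigr)
\]
(using $c_{0} = 0$ and $\|\lambda^{*}\| = d$), rearrangement gives
\[
d f(x) \leq P(x,f) + \bigl(-a_{\lambda^{*}}(x)\bigr) + \bigl(f(x\uplus c) \oslash f(x)\bigr)^{\otimes(-\lambda^{*})}.
\]
Since $-a_{\lambda^{*}}(x) \leq \overline{\Omega}[P](x)$ and the shift-quotient term is bounded above by the corresponding maximum over $\Lambda[P]$, parts (i) and (iii) of Lemma~\ref{elem} taking $m(r, \cdot)$ of both sides will yield $m(r, df) \leq m(r, P) + B$.

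For the reverse bound $m(r, P) \leq m(r, df) + A$, I would write
\[
P(x,f) - d f(x) = \max_{\lambda \in \Lambda[P]} \Bigl\{ a_{\lambda}(x) + \sum_{j=1}^{m} \lambda_{j}\bigl(f(x+c_{j}) - f(x)\bigr) + (\|\lambda\| - d) f(x) \Bigr\}
\]
and apply the elementary inequality $\max\{u_{\lambda} + v_{\lambda}\} \leq \max u_{\lambda} + \max v_{\lambda}$ to peel off the coefficient contribution, producing $\Omega[P]$. The residual maximum over $\lambda$ of the shift-quotient sum plus the correction $(\|\lambda\| - d) f(x)$ should be dominated by $\max_{\lambda} (f(x\uplus c) \oslash f(x))^{\otimes \lambda}$: for leading $\lambda$ the correction vanishes, and for non-leading $\lambda$ it must be absorbed by passing to positive parts in the tropical proximity $m(r, \cdot) = \tfrac{1}{2}(g^{+}(r) + g^{+}(-r))$, where the leading terms dominate. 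Then taking $m(r, \cdot)$ and applying Lemma~\ref{elem} will give $m(r, P) \leq m(r, df) + A$. I expect this step to be the main obstacle, precisely because of the non-uniformity of $\|\lambda\|$ across $\Lambda[P]$; the analogous argument in \cite{LY2} will provide the template for making the absorption rigorous.

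Finally, for the ``in particular'' conclusion with $f$ of hyper-order $\rho_{2} < 1$, small coefficients, and $\deg(P) > 0$: Proposition~\ref{LLDh} will yield $m(r, f(x+c_{j}) \oslash f(x)) = S_{\delta}(r, f)$ for each shift $c_{j}$. Since $\Lambda[P]$ is finite, subadditivity of $m(r, \cdot)$ will give $m(r, \max_{\mu} (f(x\uplus c) \oslash f(x))^{\otimes(\pm\mu)}) = S_{\delta}(r, f)$. Together with the smallness hypothesis $T(r, a_{\lambda}) = S_{\delta}(r, f)$ for each $\lambda$---which forces $m(r, \Omega[P])$ and $m(r, \overline{\Omega}[P])$ to be $S_{\delta}(r, f)$ as well---both $A$ and $B$ will be $S_{\delta}(r, f)$, delivering $m(r, P(x, f)) = d\, m(r, f) + S_{\delta}(r, f)$.
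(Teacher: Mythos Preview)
Your plan follows the paper's argument closely. For the bound $m(r,df)\leq m(r,P)+B$, the paper does essentially what you propose: starting from $P(x,f)\geq a_\lambda(x)+\sum_j\lambda_j(f(x+c_j)-f(x))+\|\lambda\|f(x)$ for every $\lambda$, it rearranges to $\|\lambda\|f(x)\leq P(x,f)-a_\lambda(x)-\sum_j\lambda_j(f(x+c_j)-f(x))$ and then bounds the right side by $P+\overline{\Omega}[P]+\max_\lambda(\ldots)^{\otimes(-\lambda)}$; taking $\lambda\in\hat\Lambda[P]$ on the left gives $\deg(P)f(x)$ on the nose. Your version with a single fixed $\lambda^*\in\hat\Lambda[P]$ reaches the same conclusion.

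For the reverse bound $m(r,P)\leq m(r,df)+A$, the paper simply writes down the pointwise inequality
\[
P(x,f)\ \leq\ \Omega[P](x)+\max_{\lambda\in\Lambda}\sum_{j}\lambda_j\bigl(f(x+c_j)-f(x)\bigr)+\deg(P)\,f(x)
\]
as a ``recalled'' fact and takes $m(r,\cdot)$. It does not address the $(\|\lambda\|-d)f(x)$ correction that you flagged as the main obstacle. Your caution here is justified: that pointwise inequality genuinely fails when $f(x)<0$ and $\Lambda$ contains non-leading multi-indices (e.g.\ $P=f^{\otimes(-1)}\oplus f^{\otimes 1}=|f|$ at a point with $f<0$). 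The route you suggest---passing to positive parts---is exactly how to repair it: when $f(x)\geq 0$ one has $\|\lambda\|f(x)\leq d\,f(x)$ and the pointwise bound holds; when $f(x)<0$ and $\|\lambda\|\geq 0$ one has $\|\lambda\|f(x)\leq 0$ while $(d\,f(x))^+=0$, so the contribution to $m(r,\cdot)$ is still controlled. This is precisely the $S_+/S_-$ case split that the paper carries out explicitly in the proof of Theorem~\ref{tropClun}. So the paper's proof here is a compressed version that glosses over the very point you identified; your plan to resolve it via positive parts is the correct completion, though note it requires $\|\lambda\|\geq 0$ for all $\lambda$ and does not go through for arbitrary Laurent exponents.

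The ``in particular'' part via Proposition~\ref{LLDh} and smallness of coefficients is handled identically in both.
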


\begin{proof} Recalling
$$
P(x,f) \leq \max_{\lambda\in\Lambda} a_{\lambda}(x) +
\max_{\lambda\in\Lambda} \sum_{j=0}^{m} \lambda_j
\bigl\{f(x+c_j)-f(x)\bigr\} + \deg(P) f(x),
$$
we have
\begin{eqnarray*}
m\bigl(r, P(x,f)\bigr) &\leq & m\bigl(r, \Omega[P](x)\big)
+m\Bigl(r, \max_{\lambda\in\Lambda}
       \bigl(f(x\uplus c)\oslash f(x)\bigr)^{\otimes \lambda} \Bigr)\\
& & \quad m\bigl(r, \deg(P)f(x)\bigr)\,.
\end{eqnarray*}
On the other hand, for any $\lambda\in\Lambda$,
$$
P(x,f) \geq a_{\lambda}(x)
   + \sum_{j=0}^{m} \lambda_j \bigl\{f(x+c_j)-f(x)\bigr\} + \|\lambda\|f(x),
$$
that is,
$$
\|\lambda\|f(x)\leq P(x,f) - a_{\lambda}(x) - \sum_{j=0}^{m}
\lambda_j \bigl\{f(x+c_j)-f(x)\bigr\}
$$
holds, and therefore
$$
\deg(P)f(x)\leq P(x,f) +\overline{\Omega}[P](x) +
\max_{\lambda\in\Lambda} \sum_{j=0}^{m} (-\lambda_j)
\bigl\{f(x+c_j)-f(x)\bigr\}.
$$
This implies
\begin{eqnarray*}
m\bigl(r, \deg(P)f(x)\bigr) &\leq & m\bigl(r, P(x,f)\bigr) +
m\bigl(r, \overline{\Omega}[P](x)\bigr)\\
& & +m\Bigl(r, \max_{\lambda\in\Lambda}
   \bigl(f(x\uplus c)\oslash f(x)\bigr)^{\otimes (-\lambda)} \Bigr)\,,
\end{eqnarray*}
completing the proof.
\end{proof}

As a special case of Theorem~\ref{polyVM}, we have

\begin{corollary}\label{poly} Given a tropical
meromorphic function of hyper-order $\rho_{2}<1$ and its tropical
polynomial of degree $n$,
$$
P(x,f)=\bigoplus_{j=0}^{n}a_{j}(x)\otimes f(x+c_j)^{\otimes j},
$$
then
$$
m\bigl(r,P(z,f)\bigr)=nm(r,f)+S_{\delta}(r,f).
$$
\end{corollary}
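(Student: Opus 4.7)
This is a direct specialization of Theorem~\ref{polyVM}. The plan is to view $P(x,f)$ as a tropical difference Laurent polynomial with index set $\Lambda[P] = \{\lambda^{(0)}, \lambda^{(1)}, \ldots, \lambda^{(n)}\}$, where each $\lambda^{(j)}$ is the multi-index with its only nonzero entry $\lambda^{(j)}_j = j$. Then $\|\lambda^{(j)}\| = j$, so $\deg(P) = n$, and the leading-coefficient maxima reduce to
$$\Omega[P] = \max_{0 \leq j \leq n} a_j, \qquad \overline{\Omega}[P] = \max_{0 \leq j \leq n} (-a_j),$$
each a finite tropical combination of the coefficients.

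Next, I would verify that the four quantities contributing to the error in Theorem~\ref{polyVM} are each of order $S_\delta(r,f)$. Under the tacit smallness hypothesis $T(r, a_j) = S_\delta(r,f)$ inherited from the preceding discussion, both $m(r, \Omega[P])$ and $m(r, \overline{\Omega}[P])$ are bounded by finite sums of $m(r, \pm a_j)$ and are therefore $S_\delta(r,f)$. For the shift-quotient pieces, I use
$$m\Bigl(r, \max_{j}\bigl(f(x \uplus c) \oslash f(x)\bigr)^{\otimes \lambda^{(j)}}\Bigr) \leq \sum_{j=1}^{n} j\, m\bigl(r, f(x+c_j) \oslash f(x)\bigr),$$
and symmetrically the $(-\lambda^{(j)})$ version by $\sum_{j=1}^n j\, m\bigl(r, f(x) \oslash f(x+c_j)\bigr)$. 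By Proposition~\ref{LLDh}, applied once with each shift $c_j$ and once with its negative (the bound being insensitive to the sign of $c$, since it depends only on $|c|$), every term is $o(T(r,f)/r^\delta)$ outside an exceptional set of finite logarithmic measure; the finite union of these sets is again of finite logarithmic measure.

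Feeding these estimates into Theorem~\ref{polyVM} yields
$$\bigl| m\bigl(r, P(x,f)\bigr) - m\bigl(r, n f(x)\bigr) \bigr| = S_\delta(r,f),$$
and since $n > 0$, Lemma~\ref{elem}(ii) rewrites $m(r, n f(x))$ as $n\, m(r,f)$, completing the argument. The only point that requires a moment's care is the backward-difference term $m(r, f(x) \oslash f(x+c_j))$ arising from the $(-\lambda^{(j)})$ side of Theorem~\ref{polyVM}; as indicated, this is handled by the symmetry of Proposition~\ref{LLDh} in the sign of the shift, so there is no genuine obstacle and the corollary follows essentially by unpacking the definitions on top of the general Valiron--Mohon'ko estimate.
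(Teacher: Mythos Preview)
Your proof is correct and follows exactly the route the paper takes: the corollary is stated there simply as a special case of Theorem~\ref{polyVM}, invoking its ``In particular'' clause with $\deg(P)=n>0$ and small coefficients, which is precisely what you unpack.

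One small imprecision: your handling of the backward-difference term $m\bigl(r,f(x)\oslash f(x+c_j)\bigr)$ via ``Proposition~\ref{LLDh} with shift $-c_j$'' does not quite work, since that application controls $m\bigl(r,f(x-c_j)\oslash f(x)\bigr)$, a different quantity. The clean fix is to apply Proposition~\ref{LLDh} to $-f$ rather than to change the sign of the shift: since $(-f)(x+c_j)\oslash(-f)(x)=f(x)\oslash f(x+c_j)$ and $T(r,-f)=T(r,f)+O(1)$ by the Jensen formula, this immediately gives the required $o\bigl(T(r,f)/r^{\delta}\bigr)$ bound.
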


\bigskip

The next theorem is related, in the spirit, to the Mohon'ko lemma.
However, it cannot be considered as a complete tropical counterpart
to that. Indeed, the assumptions below mean that $P(x,0)=0$, while
one always has $P(x,0)\neq 0$ in the classical setting. As for the
restricted case of integer slopes, see \cite{LY2}, Theorem 2.5:

\begin{theorem}\label{Moh} Let $f$ be a tropical meromorphic solution of
a tropical difference Laurent polynomial equation
$$
P(x,f) =\bigoplus_{\lambda\in\Lambda}
   a_{\lambda}(x)\otimes f(x\uplus c)^{\otimes \lambda}
=0
$$
such that $\|\lambda\|\neq 0$ for all $\lambda\in\Lambda$. Then any
of $m(r,f)$ and $m\bigl(r, 1_{\circ}\oslash (f\oplus a)\bigr)$
$(a\in\mathbb{R})$ is not greater than
\begin{multline*}
\left(\max_{\lambda\in\Lambda}\left|\frac{1}{\|\lambda\|}\right|\right)
\Bigl\{m\bigl(r, \Omega[P](x)\bigr)+m\bigl(r, \overline{\Omega}[P](x)\bigr)\Bigr\} \\
+ \sum_{j=0}^{m} \max_{\lambda\in\Lambda}
\left|\frac{\lambda_{j}}{\|\lambda\|}\right|
       \Bigl\{ m\bigl(r, f(x+c_j)\oslash f(x)\bigr)
        + m\bigl(r, f(x)\oslash f(x+c_j)\bigr) \Bigr\}\,.
\end{multline*}

In particular, if $f$ is of hyper-order $\rho_{2}<1$ and a solution
of a tropical difference polynomial equation with small coefficients
$$
P(x,f) =\bigoplus_{\lambda\in\Lambda}a_{\lambda}(x)\otimes
f^{\otimes \lambda}(x) =0
$$
such that $\|\lambda\|\geq 1$ for all $\lambda\in\Lambda$. Then
$$
m(r,f)=S_{\delta}(r,f)), \qquad m(r,-f)=S_{\delta}(r,f)
$$
and for any $a\in\mathbb{R}\setminus\{0\}$
$$
m\bigl(r,1_{\circ}\oslash (f\oplus a)\bigr)
  =m(r,-\max (f,a))=S_{\delta}(r,f).
$$
\end{theorem}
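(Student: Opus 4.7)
The starting point is to extract pointwise information from the equation $P(x,f)=0$, which in the tropical language says
$$
\max_{\lambda\in\Lambda}\Bigl\{a_\lambda(x)+\sum_{j=0}^m \lambda_j f(x+c_j)\Bigr\}=0\quad\text{for every }x\in\mathbb{R}.
$$
Hence at each $x$ there exists some $\lambda=\lambda(x)\in\Lambda$ for which the bracket vanishes. Writing $\sum_j\lambda_j f(x+c_j)=\|\lambda\|f(x)+\sum_j\lambda_j[f(x+c_j)-f(x)]$ and using $\|\lambda\|\neq 0$ to divide, one gets the explicit identity
$$
f(x)=-\frac{a_\lambda(x)}{\|\lambda\|}-\frac{1}{\|\lambda\|}\sum_{j=0}^m\lambda_j\bigl[f(x+c_j)-f(x)\bigr].
$$

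From here I would pass to positive parts using $y^+\leq|y|$ together with the triangle inequality, obtaining
$$
f^+(x)\leq \Bigl|\tfrac{1}{\|\lambda\|}\Bigr|\,|a_\lambda(x)|+\sum_{j=0}^m \Bigl|\tfrac{\lambda_j}{\|\lambda\|}\Bigr|\,|f(x+c_j)-f(x)|.
$$
The key algebraic observation is $|a|=a^++(-a)^+$ for any real $a$, which combined with the definitions of $\Omega[P]$ and $\overline{\Omega}[P]$ yields $a_\lambda^+(x)\leq\Omega[P]^+(x)$ and $(-a_\lambda)^+(x)\leq\overline{\Omega}[P]^+(x)$; likewise each $|f(x+c_j)-f(x)|$ splits into its two one-sided positive parts. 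Evaluating the pointwise bound at $x=r$ and at $x=-r$, replacing the ratios $\bigl|\tfrac{1}{\|\lambda(\pm r)\|}\bigr|$ and $\bigl|\tfrac{\lambda_j(\pm r)}{\|\lambda(\pm r)\|}\bigr|$ by their maxima over $\lambda\in\Lambda$ (the optimizing $\lambda$ may differ at the two endpoints), and finally dividing by $2$, I obtain exactly the asserted estimate for $m(r,f)$.

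For the bound on $m\bigl(r,1_\circ\oslash(f\oplus a)\bigr)=m(r,-\max(f,a))$ I would use $-\max(f,a)\leq -f$ and the monotonicity of $m$ in its second argument (Lemma~\ref{elem}(i)) to reduce matters to bounding $m(r,-f)$; the latter follows from the twin identity $\|\lambda\|(-f(x))=a_\lambda(x)+\sum_j\lambda_j[f(x+c_j)-f(x)]$ by exactly the same argument, since only the signs on the right have changed and every upper bound was written in terms of absolute values. For the hyper-order specialization, the hypothesis $\|\lambda\|\geq 1$ makes all the numerical factors $\bigl|\tfrac{1}{\|\lambda\|}\bigr|$ and $\bigl|\tfrac{\lambda_j}{\|\lambda\|}\bigr|$ bounded; smallness of the coefficients gives $m(r,\Omega[P])+m(r,\overline{\Omega}[P])=S_\delta(r,f)$, while Proposition~\ref{LLDh} shows $m\bigl(r,f(x+c_j)\oslash f(x)\bigr)$ and $m\bigl(r,f(x)\oslash f(x+c_j)\bigr)$ to be of the same order outside a set of finite logarithmic measure, yielding the three stated conclusions.

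The main technical subtlety is that the optimizing index $\lambda(x)$ depends on the point $x$, so the bounds at $x=r$ and $x=-r$ use a priori different $\lambda$'s; the uniformization by taking $\max_{\lambda\in\Lambda}$ is precisely what produces the coefficients in the stated inequality. Once this point is handled, the remainder is a direct pointwise reduction combined with the elementary identity $|a|=a^++(-a)^+$ and the definitions of $m(r,\cdot)$, $\Omega[P]$, and $\overline{\Omega}[P]$.
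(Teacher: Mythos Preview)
Your argument is correct and follows essentially the same route as the paper: extract the pointwise identity $\|\lambda_x\|f(x)=-a_{\lambda_x}(x)-\sum_j\lambda_{x,j}\bigl[f(x+c_j)-f(x)\bigr]$ from the vanishing of the maximum, take positive parts via $|y|=y^++(-y)^+$, uniformize the $x$-dependent index by maxima over $\Lambda$, average the bounds at $x=\pm r$, and then handle $m(r,-f)$ and $m\bigl(r,1_\circ\oslash(f\oplus a)\bigr)$ by the twin identity and monotonicity respectively. The hyper-order specialization is also handled exactly as the paper does, via smallness of the coefficients and Proposition~\ref{LLDh}.
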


\begin{proof} It follows from $P(x,f)=0$ that
$$
a_{\lambda}(x)+\sum_{j=0}^{m} \lambda_j f(x+c_j)\leq 0 \quad
(x\in\mathbb{R})
$$
for any $\lambda\in\Lambda$, while for each $x\in\mathbb{R}$ there
exists a $\lambda_x=(\lambda_{x,1} , \ldots , \lambda_{x,m})\in
\Lambda$ such that the equality, or
$$
a_{\lambda_{x}}(x)+\sum_{j=0}^{m} \lambda_{x,j}
\big\{f(x+c_j)-f(x)\bigr\} =-\|\lambda_{x}\|f(x)
$$
holds. Since $\|\lambda_x\|\neq 0$ by assumption, we have
\begin{eqnarray*}
f(x)&=& -\frac{1}{\|\lambda_{x}\|}a_{\lambda_{x}}(x)
        -\sum_{j=0}^{m}\frac{\lambda_{x,j}}{\|\lambda_{x}\|}
                        \bigl\{f(x+c_j)-f(x)\bigr\}
         \quad \text{and}\\
f(-x)&=& -\frac{1}{\|\lambda_{-x}\|}a_{\lambda_{-x}}(x)
        -\sum_{j=0}^{m}\frac{\lambda_{-x,j}}{\|\lambda_{-x}\|}
                        \bigl\{f(-x+c_j)-f(-x)\bigr\}\,,
\end{eqnarray*}
so that
\begin{eqnarray*}
& & \hspace{-5ex} f(x)^{+} \\
&\leq & \left|\frac{1}{\|\lambda_{x}\|}\right|
      \Bigl\{ \bigl(a_{\lambda_{x}}(x)\bigr)^{+}
            + \bigl(-a_{\lambda_{x}}(x)\bigr)^{+} \Bigr\} \\
& &  \hspace{-5ex}
        + \sum_{j=0}^{m}\left|\frac{\lambda_{x,j}}{\|\lambda_{x}\|}\right|
       \Biggl\{ \bigl\{f(x+c_j)-f(x)\bigr\}^{+}
              + \bigl\{f(x)-f(x+c_j)\bigr\}^{+} \Biggr\}\,,\\
&\text{and}& \\
& & \hspace{-5ex} f(-x)^{+} \\
&\leq & \left|\frac{1}{\|\lambda_{-x}\|}\right|
      \Biggl\{ \bigl(a_{\lambda_{-x}}(-x)\bigr)^{+}
             + \bigl(-a_{\lambda_{-x}}(-x)\bigr)^{+} \Biggr\} \\
& & \hspace{-8ex}
       + \sum_{j=0}^{m}\left|\frac{\lambda_{-x,j}}{\|\lambda_{-x}\|}\right|
       \Biggl\{ \bigl\{f(-x+c_j)-f(-x)\bigr\}^{+}
              + \bigl\{f(-x)-f(-x+c_j)\bigr\}^{+} \Biggr\}\,,
\end{eqnarray*}
for each $x\in\mathbb{R}$. Thus we obtain further $f(x)^{+}$ is not
greater than
\begin{multline*}
\max_{\lambda\in\Lambda} \left|\frac{1}{\|\lambda\|}\right|
\Bigl\{ \bigl(\Omega[P](x)\bigr)^{+}
              + \bigl(\overline{\Omega}[P](x)\bigr)^{+} \Bigr\} \\
+ \sum_{j=0}^{m} \max_{\lambda\in\Lambda}
                 \left|\frac{\lambda_{j}}{\|\lambda\|}\right|
        \Biggl\{ \bigl\{f(x+c_j)-f(x)\bigr\}^{+}
               + \bigl\{f(x)-f(x+c_j)\bigr\}^{+} \Biggr\}\,,
\end{multline*}
and $f(-x)^{+}$ is not greater than
\begin{multline*}
\max_{\lambda\in\Lambda} \left|\frac{1}{\|\lambda\|}\right|
      \Bigl\{ \bigl(\Omega[P](-x)\bigr)^{+}
            + \bigl(\overline{\Omega}[P](-x)\bigr)^{+} \Bigr\} \\
+ \sum_{j=0}^{m} \max_{\lambda\in\Lambda}
             \left|\frac{\lambda_{j}}{\|\lambda\|}\right|
       \Biggl\{ \bigl\{f(-x+c_j)-f(-x)\bigr\}^{+}
              + \bigl\{f(-x)-f(-x+c_j)\bigr\}^{+} \Biggr\}\,.
\end{multline*}
Now, by definition, the first result
\begin{eqnarray*}
& & \hspace{-5ex} m(r,f)
= \frac{1}{2}\big\{f(x)^{+}+f(-r)\bigr\}\\
&\leq &  \max_{\lambda\in\Lambda}
            \left|\frac{1}{\|\lambda\|}\right|
             \Biggl\{ m(r,\Omega[P](x)\bigr)
                    + m(r,\overline{\Omega}[P](x)\bigr) \Biggr\}\\
&   &  + \sum_{j=0}^{m} \max_{\lambda\in\Lambda}
            \left|\frac{\lambda_{j}}{\|\lambda\|}\right|
             \Biggl\{ m\bigl(r, f(x+c_j)\oslash f(x)\bigr)
                    + m\bigl(r, f(x)\oslash f(x+c_j)\bigr) \Biggr\}
\end{eqnarray*}
is obtained with the two estimates above.

Similarly,
\begin{eqnarray*}
-f(x)&=& \frac{1}{\|\lambda_{x}\|}a_{\lambda_{x}}(x)
        +\sum_{j=0}^{m}\frac{\lambda_{x,j}}{\|\lambda_{x}\|}
                         \bigl\{f(x+c_j)-f(x)\bigr\}
         \quad \text{and}\\
-f(-x)&=& \frac{1}{\|\lambda_{-x}\|}a_{\lambda_{-x}}(x)
        +\sum_{j=0}^{m}\frac{\lambda_{-x,j}}{\|\lambda_{-x}\|}
                         \bigl\{f(-x+c_j)-f(-x)\bigr\}\,,
\end{eqnarray*}
and thus $\bigl(-f(x)\bigr)^{+}$ is not greater than
\begin{multline*}
\max_{\lambda\in\Lambda} \left|\frac{1}{\|\lambda\|}\right|
        \Bigl\{ \bigl(\Omega[P](x)\bigr)^{+}
              + \bigl(\overline{\Omega}[P](x)\bigr)^{+} \Bigr\} \\
 + \sum_{j=0}^{m} \max_{\lambda\in\Lambda}
              \left|\frac{\lambda_{j}}{\|\lambda\|}\right|
                \Bigl\{ \bigl\{f(x+c_j)-f(x)\bigr\}^{+}
                      + \bigl\{f(x)-f(x+c_j)\bigr\}^{+} \Bigr\}\,,
\end{multline*}
and $\bigl(-f(-x)\bigr)^{+}$ is not greater than
\begin{multline*}
\max_{\lambda\in\Lambda} \left|\frac{1}{\|\lambda\|}\right|
      \Bigl\{ \bigl(\Omega[P](-x)\bigr)^{+}
            + \bigl(\overline{\Omega}[P](-x)\bigr)^{+} \Bigr\} \\
+ \sum_{j=0}^{m} \max_{\lambda\in\Lambda}
            \left|\frac{\lambda_{j}}{\|\lambda\|}\right|
                 \Bigl\{ \bigl\{f(-x+c_j)-f(-x)\bigr\}^{+}
                       + \bigl\{f(-x)-f(-x+c_j)\bigr\}^{+} \Bigr\}\,.
\end{multline*}
Hence the growth of $m(r, 1_{\circ}\oslash f)$ is also bounded by
the same quantity as the above.

\medskip

Finally, since $-\max\bigl(f(x),a \bigr)\leq -f(x)$ on $\mathbb{R}$
for any $a\in\mathbb{R}$,
$$
m\bigl(r, 1_{\circ}\oslash (f\oplus a)\bigl)
  \leq m(r, 1_{\circ}\oslash f)\,,
$$
and we are done.
\end{proof}

\bigskip

To close this section, we recall the Clunie lemma. In addition to
the classical differential version, see e.g. \cite{L}, we recall the
corresponding difference version, namely the classical form of the
Clunie lemma in the case of integer slopes, see \cite{HS},
Theorem~4.5:

\begin{theorem}\label{TropC} Let $P(x,f),Q(x,f)$ be two tropical
difference polynomials with small coefficients. If $f$ is a tropical
meromorphic function satisfying equation
$$
f(x)^{\otimes n} \otimes P(x,f)=Q(x,f)
$$
such that the degree of $Q$ in $f$ and its shifts is at most $n$,
then for any $\varepsilon >0$,
$$
m\bigl(r,P(x,f)\bigr)
=O\left\{r^{-1}\Bigl(T(r+|c|,f)^{1+\varepsilon}
                       +o\bigl\{T(r+|c|,f)\bigr\}\Bigr)\right\},
$$
holds outside of an exceptional set of finite logarithmic measure.
\end{theorem}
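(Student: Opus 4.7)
The plan is to adapt the classical Clunie lemma strategy to the tropical setting by rewriting the functional equation as
$$P(x,f) = Q(x,f) - n f(x),$$
and estimating $P(\pm r,f)^{+}$ separately according to the sign of $f(\pm r)$. This dichotomy replaces the classical split of the integration domain into $\{|f|\geq 1\}$ and $\{|f|<1\}$.

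For a point $x$ at which $f(x)\geq 0$, I will use the representation $Q(x,f)=\bigoplus_{\mu\in\Lambda[Q]}\{a_\mu(x)+\sum_{j=0}^{m}\mu_j f(x+c_j)\}$ with $\|\mu\|\leq\deg(Q)\leq n$ and the telescoping identity
$$\sum_{j=0}^{m}\mu_j f(x+c_j) = \|\mu\|f(x)+\sum_{j=1}^{m}\mu_j\bigl(f(x+c_j)-f(x)\bigr),$$
to rewrite
$$P(x,f)=\max_{\mu\in\Lambda[Q]}\Bigl\{a_\mu(x)+(\|\mu\|-n)f(x)+\sum_{j=1}^{m}\mu_j\bigl(f(x+c_j)-f(x)\bigr)\Bigr\}.$$
Since $\|\mu\|-n\leq 0$ and $f(x)\geq 0$, the middle term is nonpositive, so taking positive parts gives
$$P(x,f)^{+}\leq\Omega[Q](x)^{+}+\sum_{j=1}^{m}\Bigl(\max_{\mu}\mu_j\Bigr)\bigl(f(x+c_j)-f(x)\bigr)^{+}.$$
For $x$ with $f(x)<0$ I will instead apply the same telescoping to the polynomial form of $P$: since each exponent $\lambda_j\geq 0$ (because $P$ is a tropical difference polynomial rather than a Laurent one), one has $\|\lambda\|f(x)\leq 0$, producing the parallel bound
$$P(x,f)^{+}\leq\Omega[P](x)^{+}+\sum_{j=1}^{m}\Bigl(\max_{\lambda}\lambda_j\Bigr)\bigl(f(x+c_j)-f(x)\bigr)^{+}.$$

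Averaging the resulting bounds at $x=r$ and $x=-r$, and observing that $\frac{1}{2}\bigl((f(r+c_j)-f(r))^{+}+(f(-r+c_j)-f(-r))^{+}\bigr)=m\bigl(r,f(x+c_j)\oslash f(x)\bigr)$, I obtain an estimate of the form
$$m\bigl(r,P(x,f)\bigr)\leq C_1\bigl(m(r,\Omega[Q])+m(r,\Omega[P])\bigr)+C_2\sum_{j=1}^{m}m\bigl(r,f(x+c_j)\oslash f(x)\bigr).$$
The smallness of the coefficients makes the first group $o\bigl(T(r+|c|,f)\bigr)$ outside an exceptional set of finite logarithmic measure, while Theorem \ref{LLD} bounds each $m\bigl(r,f(x+c_j)\oslash f(x)\bigr)$ by $O\bigl(r^{-1}(T(r+|c|,f)^{1+\varepsilon}+o(T(r+|c|,f)))\bigr)$ outside such a set. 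Absorbing the coefficient contributions into the error yields the asserted estimate.

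The main difficulty I expect is the case $f(x)<0$, which is the genuine tropical counterpart of $\{|f|<1\}$ in the classical Clunie proof. Classically one bounds $P$ there by its coefficients using $|f|^{\lambda_j}\leq 1$, but the naive tropical analogue $\lambda_j f(x+c_j)\leq 0$ fails because a shift $f(x+c_j)$ need not be nonpositive when $f(x)$ is. The telescoping trick above resolves this: pulling out $\|\lambda\|f(x)$ uses both $\lambda_j\geq 0$ and the sign of $f(x)$ to absorb the problematic main term, leaving only the differences $f(x+c_j)-f(x)$, which are exactly what the tropical logarithmic derivative lemma controls.
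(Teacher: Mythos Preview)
Your proposal is correct and follows essentially the same strategy as the paper. The paper does not give a standalone proof of this theorem (it is quoted from \cite{HS}), but it proves the more general Theorem~\ref{tropClun} for $H(x,f)\otimes P(x,f)=Q(x,f)$ by exactly the same dichotomy: splitting $\{\pm r\}$ into $S_{+}=\{s:f(s)\geq 0\}$ and $S_{-}=\{s:f(s)<0\}$, telescoping out $\|\lambda\|f(x)$ on $S_{-}$ to exploit $\deg(P)\geq 0$, and using the equation together with the telescoping in $Q$ (and in $H$) on $S_{+}$ to exploit $\deg(Q)\leq\deg(H)$; your argument is precisely the specialization $H(x,f)=f(x)^{\otimes n}$, where the term $m\bigl(r,1_{\circ}\oslash\Upsilon[H]\bigr)$ vanishes and no negative-exponent shift terms arise.
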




More general versions of the Clunie lemma have been proved
in~\cite{YY} for differential polynomials and in~\cite{LY} for
difference polynomials. The following theorem is the tropical
counterpart of these versions of the Clunie lemma, see \cite{LY2}
for the same result in the case of integer slopes.

\begin{theorem}\label{tropClun} Let $H(x,f),P(x,f),Q(x,f)$ be tropical
difference Laurent polynomials in $f$ and its shifts. If $f$ is a
tropical meromorphic function satisfying equation
\begin{equation}\label{tropeq}
 H(x,f)\otimes P(x,f)=Q(x,f)
\end{equation}
such that $\deg(P)\geq 0$ and $\deg(Q)\leq \deg(H)$ in $f$ and its
shifts, then
\begin{multline}
m\bigl(r,P(x,f)\bigr) \\
\leq m\bigl(r, \Omega[P](x)\bigr)
      +m\bigl(r, \Omega[Q](x)\bigr)
        +m(r, 1_{\circ}\oslash \Upsilon [H](x)\bigr) \\
 +m\biggl(r, \max_{\lambda\in \Lambda[P]}
                  \bigl(f(x\uplus c)\oslash f(x)\bigr)^{\otimes \lambda} \biggr) \\
 +m\biggl(r, \max_{\mu \in \Lambda[Q]}
                  \bigl(f(x\uplus c)\oslash f(x)\bigr)^{\otimes \mu} \biggr)\\
 +m\biggl(r, \max_{\nu \in \hat{\Lambda}[H]}
                  \bigl(f(x\uplus c)\oslash f(x)\bigr)^{\otimes (-\nu)} \biggr).
   \label{tropest}
\end{multline}

In particular, if each of those polynomials $H(x,f),P(x,f), Q(x,f)$
has the small coefficients, then, for given $\varepsilon>0$,
\begin{equation}\label{tropineq}
 m\bigl(r,P(x,f)\bigr)
 =O\left\{r^{-1}\Bigl(T(r+|c|,f)^{1+\varepsilon}
                 +o\bigl\{T(r+|c|,f)\bigr\}\Bigr)\right\}
\end{equation}
holds outside an exceptional set of finite logarithmic measure.
\end{theorem}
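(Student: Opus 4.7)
The plan is to adapt the integer-slope argument of \cite{LY2}: derive a pointwise upper bound for $P(x,f)^{+}$ at $x=\pm r$ and then pass to the average $m(r,P)$. Writing the equation $H(x,f)\otimes P(x,f)=Q(x,f)$ in ordinary notation as $P(x,f)=Q(x,f)-H(x,f)$, I would split the argument according to the sign of $f(x)$ at the point being bounded.

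In the regime $f(x)\geq 0$, I would pick $\nu^{*}=\nu^{*}(x)\in\hat{\Lambda}[H]$ with $a_{\nu^{*}}(x)=\Upsilon[H](x)$; since $\|\nu^{*}\|=\deg(H)$, the expansion $\sum_{j=0}^{m}\nu^{*}_{j}f(x+c_{j})=\deg(H)\,f(x)+\sum_{j=1}^{m}\nu^{*}_{j}(f(x+c_{j})-f(x))$ gives the lower bound
$$
H(x,f)\geq \Upsilon[H](x)+\deg(H)\,f(x)+\sum_{j=1}^{m}\nu^{*}_{j}\bigl(f(x+c_{j})-f(x)\bigr),
$$
while for each $\mu\in\Lambda[Q]$ the inequality $\|\mu\|\leq \deg(Q)\leq \deg(H)$ together with $f(x)\geq 0$ yields the upper bound
$$
Q(x,f)\leq \Omega[Q](x)+\deg(Q)\,f(x)+\max_{\mu\in\Lambda[Q]}\bigl(f(x\uplus c)\oslash f(x)\bigr)^{\otimes\mu}.
$$
Subtracting, the contribution $(\deg(Q)-\deg(H))f(x)$ is nonpositive, leaving a pointwise bound on $P(x,f)$ by $\Omega[Q](x)-\Upsilon[H](x)$ plus the two shift maxima for $\mu\in\Lambda[Q]$ and for $-\nu$ with $\nu\in\hat{\Lambda}[H]$. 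In the complementary regime $f(x)<0$, I would use the definition of $P$ in conjunction with the hypothesis $\deg(P)\geq 0$ to derive the bound
$$
P(x,f)\leq \Omega[P](x)+\max_{\lambda\in\Lambda[P]}\bigl(f(x\uplus c)\oslash f(x)\bigr)^{\otimes\lambda}.
$$
Taking the pointwise maximum of the two bounds and then positive parts via $\max(A,B)^{+}\leq A^{+}+B^{+}$ yields (\ref{tropest}) after averaging at $x=\pm r$ according to the definition of the proximity function.

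For the asymptotic estimate (\ref{tropineq}), the smallness hypothesis reduces $m(r,\Omega[P])$, $m(r,\Omega[Q])$, and $m(r,1_{\circ}\oslash \Upsilon[H])$ to $S_{\delta}(r,f)$-type quantities, while Theorem~\ref{LLD} controls each $m(r,f(x+c_{j})\oslash f(x))$ by $O\bigl(r^{-1}(T(r+|c|,f)^{1+\varepsilon}+o(T(r+|c|,f)))\bigr)$ outside an exceptional set of finite logarithmic measure; finiteness of $\Lambda[P]$, $\Lambda[Q]$, $\hat{\Lambda}[H]$ transfers this bound to the three shift maxima by subadditivity of $m$.

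I expect the hard part to be the ``small $f$'' case: the Laurent nature of $P$ admits indices with $\|\lambda\|<0$, which makes the $\|\lambda\|f(x)$-contribution more delicate to absorb than in the non-Laurent setting of \cite{LY2}, where $\|\lambda\|\geq 0$ is automatic. The resolution should combine a finer case split (on the signs of both $f(x)$ and $\|\lambda\|$) with the hypothesis $\deg(P)\geq 0$, ensuring that for each sign-combination the dominant piece of $P$ admits the desired pointwise estimate.
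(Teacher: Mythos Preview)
Your argument mirrors the paper's proof exactly: split $\{\pm r\}$ into $S_{+}=\{s:f(s)\geq 0\}$ and $S_{-}=\{s:f(s)<0\}$, on $S_{-}$ bound $P$ directly from its definition using $\deg(P)\geq 0$, and on $S_{+}$ use the equation together with the lower bound $H(x,f)\geq \Upsilon[H](x)+\deg(H)f(x)+\text{(shift term over }\hat{\Lambda}[H]\text{)}$ and the upper bound for $Q$, then average. Regarding your anticipated ``hard part'': the paper does \emph{not} introduce any finer case split for Laurent indices with $\|\lambda\|<0$; it writes, for $x\in S_{-}$, the chain
\[
P(x,f)=\max_{\lambda}\Bigl\{a_{\lambda}(x)+\textstyle\sum_{j}\lambda_{j}\bigl(f(x+c_{j})-f(x)\bigr)+\|\lambda\|f(x)\Bigr\}
\leq \Omega[P](x)+\max_{\lambda}\bigl(f(x\uplus c)\oslash f(x)\bigr)^{\otimes\lambda}+\deg(P)f(x),
\]
and then drops $\deg(P)f(x)\leq 0$ --- exactly the bound you wrote --- so your outline already matches the paper's level of detail and shares whatever fragility that step has in the genuinely Laurent setting.
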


\begin{proof}
Given a fixed $r>0$, we put
$$
S_{+}:=\{ s\, :\, f(s)\geq 0, |s|=r\} \quad \text{and} \quad
S_{-}:=\{ s\, :\, f(s)< 0, |s|=r\}\,,
$$
so that $S_{+}\cup S_{-}=\{\pm r\}$. Then
\begin{eqnarray} 
m\bigl(r, P(x,f)\bigr)
&=& \frac{1}{2}\bigl(P(r,f)^{+}+P(-r,f)^{+}\bigr)\\
&=& \frac{1}{2}\left(\sum_{s\in S_{+}} P(s,f)^{+}
                    +\sum_{s\in S_{-}} P(s,f)^{+}\right)\,.
\end{eqnarray}
Now we denote
\begin{eqnarray*}
P(x,f)&=&\bigoplus_{\lambda\in\Lambda[P]}^{}
            a_{\lambda}(x)\otimes f(x\uplus c)^{\otimes \lambda}\,, \\
Q(x,f)&=&\bigoplus_{\mu \in\Lambda[Q]}^{}
            b_{\mu}(x)\otimes f(x\uplus c)^{\otimes \mu}\,, \\
H(x,f)&=&\bigoplus_{\nu \in\Lambda[H]}^{}
            d_{\nu}(x)\otimes f(x\uplus c)^{\otimes \nu}\,. \\
\end{eqnarray*}

When $x \in S_{-}$, we have
\begin{eqnarray*}
P(x,f) &=&\max_{\lambda\in\Lambda[P]}
      \Bigl\{ a_{\lambda}(x)
               + \sum_{j=0}^{m} \lambda_j \bigl(f(x+c_j)-f(x)\bigr)
                 +\|\lambda\| f(x) \Bigr\} \\
&\leq & \max_{\lambda\in\Lambda[P]}
       \Bigl\{ a_{\lambda}(x)
               + \sum_{j=0}^{m} \lambda_j \bigl(f(x+c_j)-f(x)\bigr)\Bigr\}
                 +\deg(P) f(x) \\
&\leq & \Omega[P](x)
        + \max_{\lambda\in\Lambda[P]}
            \bigl(f(x\uplus c)\oslash f(x)\bigr)^{\otimes \lambda}
                         +\deg(P) f(x) \\
&\leq & \Omega[P](x)
        + \max_{\lambda\in\Lambda[P]}
            \bigl(f(x\uplus c)\oslash f(x)\bigr)^{\otimes \lambda}\,,
\end{eqnarray*}
where the last inequality follows from the present assumptions,
$\deg(P)\geq 0$ and $f(x)<0$. Thus for $s \in S_{-}$,
\begin{eqnarray}
P(s,f)^{+} &\leq & \Omega[P](s)^{+}
           + \left(\max_{\lambda\in\Lambda[P]}
                  \bigl(f(x\uplus c)\oslash f(x)\bigr)^{\otimes \lambda}\right)^{+}
            \nonumber \\
&\leq & 2 m\bigl(r, \Omega[P](s)\bigr) \nonumber \\
&     &  \qquad + 2 m \Bigl(r, \max_{\lambda\in\Lambda[P]}
                  \bigl(f(x\uplus c)\oslash f(x)\bigr)^{\otimes \lambda}\Bigr)\,.
            \label{minus}
\end{eqnarray}

When $s\in S_{+}$, we have
\begin{equation}
Q(x,f) \leq \Omega[Q](x)
        + \max_{\mu\in\Lambda[Q]} \bigl(f(x\uplus c)\oslash f(x)\bigr)^{\otimes \mu}
             +\deg(Q) f(x)\,,  \label{ineq:Q}
\end{equation}
while
\begin{eqnarray*}
& & \hspace{-7ex} Q(x,f)=H(x,f)\otimes P(x,f)\\
&=& P(x, f)
    + \max_{\nu \in\Lambda[H]}
      \Bigl\{ d_{\nu}(x)
           + \sum_{j=0}^{m} \nu_j \bigl(f(x+c_j)-f(x)\bigr)+\|\nu\| f(x) \Bigr\} \\
&\geq & P(x,f)+ \max_{\nu \in\hat{\Lambda}[H]}
      \Bigl\{ d_{\nu}(x) + \sum_{j=0}^{m} \nu_j \bigl(f(x+c_j)-f(x)\bigr)
              +\deg(H) f(x) \Bigr\} \\
&= & P(x,f)+ \max_{\nu \in\hat{\Lambda}[H]}
      \Bigl\{ d_{\nu}(x) + \sum_{j=0}^{m} \nu_j \bigl(f(x+c_j)-f(x)\bigr)\Bigr\}
              +\deg(H) f(x)\,.
\end{eqnarray*}
Further, the latter implies
\begin{eqnarray*}
& & \hspace{-7ex} P(x,f) + \deg(H) f(x) \\
&\leq & Q(x, f)
    - \max_{\nu \in\hat{\Lambda}[H]}
          \Bigl\{ d_{\nu}(x)+\sum_{j=0}^{m} \nu_j \bigl(f(x+c_j)-f(x)\bigr)\Bigr\}\\
&\leq & Q(x,f)+ \min_{\nu \in\hat{\Lambda}[H]}
     \Bigl\{ -d_{\nu}(x) + \sum_{j=0}^{m} (-\nu_j) \bigl(f(x+c_j)-f(x)\bigr)\Bigr\}\\
&\leq & Q(x,f)+ \min_{\nu \in\hat{\Lambda}[H]}\Bigl\{
-d_{\nu}(x)\Bigr\}
        + \max_{\nu \in\hat{\Lambda}[H]}
            \Bigl\{\sum_{j=0}^{m} (-\nu_j) \bigl(f(x+c_j)-f(x)\bigr)\Bigr\}\\
&\leq & Q(x,f)- \Upsilon [H](x)
         + \max_{\nu \in\hat{\Lambda}[H]}
            \bigl(f(x\uplus c)\oslash f(x)\bigr)^{\otimes (-\nu)}\,.
\end{eqnarray*}
This together with (\ref{ineq:Q}) shows
\begin{eqnarray*}
P(x,f) &\leq & \Omega[Q](x) +1_{\circ}\oslash \Upsilon [H](x)
         + \max_{\mu \in \Lambda[Q]}
            \bigl(f(x\uplus c)\oslash f(x)\bigr)^{\otimes \mu} \\
&     & + \max_{\nu \in \hat{\Lambda}[H]}
            \bigl(f(x\uplus c)\oslash f(x)\bigr)^{\otimes (-\nu)}
           +\bigl\{\deg(Q) - \deg(H)\bigr\} f(x)\\
&\leq & \Omega[Q](x) +1_{\circ}\oslash \Upsilon [H](x)
         + \max_{\mu \in \Lambda[Q]}
            \bigl(f(x\uplus c)\oslash f(x)\bigr)^{\otimes \mu} \\
&     &  + \max_{\nu \in \hat{\Lambda}[H]}
            \bigl(f(x\uplus c)\oslash f(x)\bigr)^{\otimes (-\nu)}\,,
\end{eqnarray*}
since $\bigl\{\deg(Q) - \deg(H)\bigr\} f(x)\leq 0$. Similar to the
above, we obtain
\begin{eqnarray}
P(s,f)^{+} &\leq & 2\Biggl\{ m\bigl(r, \Omega[Q](s)\bigr)
                + m\bigl(r, 1_{\circ}\oslash \Upsilon [H](x)\bigr) \\ \nonumber
&     & \qquad
            m \Bigl(r, \max_{\mu \in\Lambda[Q]}
                         \bigl(f(x\uplus c)\oslash f(x)\bigr)^{\otimes \mu}\Bigr)\\
                   \nonumber
&     & \qquad \qquad
            m \Bigl(r, \max_{\nu \in\hat{\Lambda}[H]}
                         \bigl(f(x\uplus c)\oslash f(x)\bigr)^{\otimes (-\nu)}\Bigr)\Biggr\}\,.
               \label{plus}
\end{eqnarray}
Inserting both (\ref{minus}) and (\ref{plus}) into (\ref{prox}), we
obtain the desired estimate (\ref{tropest}).
\end{proof}

\begin{corollary}\label{tropClunf} Let $H(x,f),P(x,f),Q(x,f)$ be
tropical difference polynomials in $f$ and its shifts with small
coefficients. If $f$ is a tropical meromorphic function of
hyper-order $\rho_{2}<1$ satisfying equation
\begin{equation}\label{tropeq2}
 H(x,f)\otimes P(x,f)=Q(x,f)
\end{equation}
such that $\deg (Q)\leq \deg (H)$ in $f$ and its shifts, then for
$\delta$ with $0<\delta <1-\rho_{2}$,
$$
m(r,f)=S_{\delta}(r,f)
$$
outside an exceptional set of finite logarithmic measure.
\end{corollary}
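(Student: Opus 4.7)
The plan is to combine Theorem \ref{tropClun} with Proposition \ref{Ptof}. Applying Theorem \ref{tropClun} to the equation (\ref{tropeq2}) yields the estimate (\ref{tropest}) on $m(r, P(x,f))$; the goal is to show each of its six terms is $S_{\delta}(r,f)$ outside an exceptional set of finite logarithmic measure, giving $m(r, P(x,f)) = S_{\delta}(r,f)$, and then to bootstrap this to a bound on $m(r,f)$ itself via Proposition \ref{Ptof}.

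The three coefficient-type terms $m(r, \Omega[P])$, $m(r, \Omega[Q])$, and $m(r, 1_{\circ}\oslash \Upsilon[H])$ are controlled by the smallness hypothesis: sub-additivity of the proximity function over tropical maxima gives $m(r, \max_{\lambda} a_{\lambda}) \leq \sum_{\lambda} m(r, a_{\lambda}) = S_{\delta}(r,f)$, and for the third term one passes from $T(r, d_{\nu}) = S_{\delta}(r,f)$ to $m(r, -d_{\nu}) = S_{\delta}(r,f)$ via the tropical Jensen formula (\ref{jensen}). The three shift-quotient terms, of the form $m\bigl(r, \max_{\lambda}(f(x\uplus c)\oslash f(x))^{\otimes \lambda}\bigr)$, are each bounded by a finite linear combination of $m\bigl(r, f(x+c_j)\oslash f(x)\bigr)$ and $m\bigl(r, f(x)\oslash f(x+c_j)\bigr)$ using sub-additivity and the identity $m(r, \alpha g) = \alpha m(r,g)$ from Lemma \ref{elem} applied to positive and negative parts of each $\lambda_j$. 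Each of the resulting quantities is $S_{\delta}(r,f)$ by Proposition \ref{LLDh} (applied in both the $+c_j$ and $-c_j$ directions). Taking the union of the finitely many exceptional sets that arise---still of finite logarithmic measure---one obtains $m(r, P(x,f)) = S_{\delta}(r,f)$ outside a single such set.

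Under the natural non-triviality hypothesis that $P$ contains at least one term $a_{\lambda}\otimes f(x\uplus c)^{\otimes\lambda}$ with $\|\lambda\| > 0$ (the implicit reading $\deg(P)\geq 1$, since if $\deg(P)=0$ then $P$ degenerates to a tropical max of coefficients and no information about $f$ could be recovered from $m(r,P(x,f))$), Proposition \ref{Ptof} delivers
\[
\|\lambda\|\,m(r,f) \leq \sum_{j=1}^m \bigl\{\lambda_j^+\, m\bigl(r, f(x)\oslash f(x+c_j)\bigr) + \lambda_j^-\, m\bigl(r, f(x+c_j)\oslash f(x)\bigr)\bigr\} + m(r, 1_{\circ}\oslash a_{\lambda}) + m\bigl(r, P(x,f)\bigr).
\]
Every term on the right has already been shown to be $S_{\delta}(r,f)$: the shift-quotient terms by Proposition \ref{LLDh}, the coefficient term $m(r, 1_{\circ}\oslash a_{\lambda})$ by the smallness of $a_{\lambda}$ together with the tropical Jensen formula, and the last term by the previous paragraph. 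Dividing through by $\|\lambda\| > 0$ yields $m(r,f) = S_{\delta}(r,f)$, as required.

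The main obstacle is really bookkeeping rather than substance: one must identify the correct nontrivial term of $P$ to feed into Proposition \ref{Ptof}, and keep careful track of the union of exceptional sets produced by Proposition \ref{LLDh}, Lemma \ref{technical}, and Theorem \ref{tropClun}. Since a finite union of sets of finite logarithmic measure remains of finite logarithmic measure, the final conclusion holds outside a single exceptional set of finite logarithmic measure, as stated.
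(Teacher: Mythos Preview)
The paper states this corollary without proof, so the intended argument is simply to specialize the estimate (\ref{tropest}) of Theorem~\ref{tropClun} under the hyper-order hypothesis: the three coefficient terms are $S_{\delta}(r,f)$ by smallness, and the three shift-quotient terms are $S_{\delta}(r,f)$ by Proposition~\ref{LLDh}. Your first two paragraphs carry this out correctly, and they constitute exactly what the paper has in mind.

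The issue is the \emph{conclusion} of the corollary. The direct specialization of Theorem~\ref{tropClun} yields $m\bigl(r,P(x,f)\bigr)=S_{\delta}(r,f)$, not $m(r,f)=S_{\delta}(r,f)$; compare (\ref{tropineq}), Theorem~\ref{TropC}, and the classical Clunie lemma, all of which bound $m(r,P)$. The displayed $m(r,f)$ in the corollary (and likewise in Corollary~\ref{tropClunfsp}) is almost certainly a slip for $m\bigl(r,P(x,f)\bigr)$. You have taken the printed statement literally and supplied the missing bridge via Proposition~\ref{Ptof}; that bootstrap is valid, but, as you correctly flag, it requires the additional hypothesis that $P$ contains a term with $\|\lambda\|>0$. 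This hypothesis is not part of the stated corollary, so strictly speaking the literal statement does not follow without it (if every monomial of $P$ has $\|\lambda\|=0$ then $P$ is independent of $f$ and nothing about $m(r,f)$ can be extracted). For difference \emph{polynomials} that genuinely involve $f$, the extra hypothesis is automatic, so your argument recovers the literal claim in all nondegenerate cases.

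In short: your first step matches the paper's (implicit) proof and establishes what the corollary should say; your second step is a correct and useful addendum that repairs the stated conclusion at the cost of a mild, natural extra assumption.
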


\begin{corollary}\label{tropClunfsp} Let $P(x,f),Q(x,f)$ be
tropical difference polynomials in $f$ and its shifts with small
coefficients and $\alpha\in\mathbb{R}$. If $f$ is a tropical
meromorphic function of hyper-order $\rho_{2}<1$ satisfying equation
\begin{equation}\label{tropeq3}
 f(x)^{\otimes\alpha}\otimes P(x,f)=Q(x,f)
\end{equation}
such that $\deg (Q)\leq\alpha$ in $f$ and its shifts, then for
$\delta$ with $0<\delta <1-\rho_{2}$,
$$
m(r,f)=S_{\delta}(r,f)
$$
outside an exceptional set of finite logarithmic measure.
\end{corollary}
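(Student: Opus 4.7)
The plan is to recognize Corollary \ref{tropClunfsp} as a direct specialization of Corollary \ref{tropClunf}. Set
$$H(x,f) := f(x)^{\otimes \alpha},$$
viewed as a single-term tropical difference Laurent polynomial whose only coefficient is $1_\circ = 0$. Being constant, this coefficient satisfies $T(r,0)=0$, so $H$ has small coefficients with respect to $f$ in the sense described at the start of Section \ref{VMC}. Its total degree equals $\alpha$, so the present hypothesis $\deg(Q)\leq\alpha$ is precisely $\deg(Q)\leq\deg(H)$, matching the degree condition required by Corollary \ref{tropClunf}.

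With this choice of $H$, the given equation $f(x)^{\otimes\alpha}\otimes P(x,f)=Q(x,f)$ is an instance of $H(x,f)\otimes P(x,f)=Q(x,f)$, the coefficients of $H$, $P$, $Q$ are all small with respect to $f$, and $f$ is of hyper-order $\rho_2<1$. Every hypothesis of Corollary \ref{tropClunf} is therefore verified, and applying that corollary yields $m(r,f)=S_\delta(r,f)$ outside a set of finite logarithmic measure, as desired.

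The only point meriting attention is that $\alpha$ may be an arbitrary real number, possibly negative or non-integral; this causes no difficulty because Section \ref{VMC} expressly permits tropical difference Laurent polynomials with real exponents, so $f(x)^{\otimes\alpha}$ genuinely qualifies as an admissible $H$. Accordingly there is no substantive obstacle: the entire proof is a matter of lining up the notation and invoking the preceding corollary.
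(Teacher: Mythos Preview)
Your proposal is correct and matches the paper's intent: Corollary~\ref{tropClunfsp} is stated in the paper without proof, precisely because it is the specialization of Corollary~\ref{tropClunf} obtained by taking $H(x,f)=f(x)^{\otimes\alpha}$, exactly as you describe. Your verification that this single-term $H$ has small (indeed constant) coefficient and degree $\alpha$, so that the hypothesis $\deg(Q)\le\alpha$ becomes $\deg(Q)\le\deg(H)$, is all that is needed.
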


\section{Periodic functions}\label{periodic}

In this section, we proceed to showing that non-constant tropical
periodic functions are of finite order, and more precisely, of order
two.

\medskip

Before going into details, let us consider the elementary
ultra-discrete equation
\begin{equation} \label{eqn:1}
f(x+1)=c\otimes f(x) \quad \text{i.e.} \quad f(x+1)\oslash f(x)=c,
\end{equation}
for $c\in\mathbb{R}\setminus\{0\}$. This equation has a special
solution $g_0(x)=cx=x^{\otimes c}$. Thus for an arbitrary tropical
meromorphic solution $g(x)$ to Equation~(\ref{eqn:1}), we see that
the difference $g(x)-x^{\otimes c}$ must be a tropical meromorphic
periodic function of period~$1$. Hence, an arbitrary tropical
meromorphic solution of (\ref{eqn:1}) is uniquely obtained as the
sum of the linear function $cx=x^{\otimes c}$ and a tropical
meromorphic $1$-periodic function. This sum is of order two, since
$g_0(x)$ is of order $1$ as one may immediately see:
\begin{proposition} \label{prop:0}
For any $c\in\mathbb{R}$, we have $N(r,x^{\otimes c})=0$ and
$$
T(r,x^{\otimes c})=m(r,x^{\otimes c})=\frac{|c|}{2}r \quad (r>0).
$$
\end{proposition}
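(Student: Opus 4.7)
The plan is essentially a direct computation from the definitions, so I expect no serious obstacle, only a careful case split on the sign of $c$.

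First I would observe that $g_0(x):=x^{\otimes c}=cx$ is an everywhere differentiable (affine) function on $\mathbb{R}$. Hence its derivative has no discontinuities, so $g_0$ has neither zeros nor poles in the sense introduced after Definition \ref{mero}. From the definition (\ref{count}) of the counting function we therefore immediately get $n(r,g_0)=0$ for all $r>0$, and thus $N(r,g_0)=0$. This takes care of the first claim.

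Next I would evaluate the proximity function from its definition (\ref{prox}):
\[
 m(r,g_0)=\tfrac{1}{2}\bigl(g_0^{+}(r)+g_0^{+}(-r)\bigr)
         =\tfrac{1}{2}\bigl(\max(cr,0)+\max(-cr,0)\bigr).
\]
A split on the sign of $c$ finishes the computation. If $c\geq 0$, then $\max(cr,0)=cr$ and $\max(-cr,0)=0$, giving $m(r,g_0)=cr/2=|c|r/2$. If $c<0$, then $\max(cr,0)=0$ and $\max(-cr,0)=-cr$, so $m(r,g_0)=-cr/2=|c|r/2$. In both cases $m(r,g_0)=|c|r/2$.

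Finally, combining the two computations via (\ref{char}) gives
\[
 T(r,g_0)=m(r,g_0)+N(r,g_0)=\tfrac{|c|}{2}r,
\]
as asserted. No delicate step arises; the only thing to keep straight is the sign analysis in the proximity computation.
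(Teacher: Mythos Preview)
Your argument is correct and is exactly the direct computation one would expect; the paper in fact states this proposition without proof, regarding it as immediate, so your verification fills in precisely what the authors left to the reader.
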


We next observe that a result reminiscent to the behavior of
elliptic functions in the complex plane holds for tropical periodic
functions:
\begin{proposition} \label{prop:1}
A non-constant tropical periodic function has as many roots and
poles in a period interval, counting multiplicities. In other words,
the sum
$$
\sum_{c\in (\text{supp}~\omega_f)\cap[0,1)}\omega_f(c) =\sum_{a\in
[0,1), \ \omega_f(a)>0}\tau_f(a)-\sum_{b\in [0,1), \
\omega_f(b)<0}\tau_f(b)
$$
vanishes for any tropical meromorphic $1$-periodic function $f(x)$.
More generally, any tropical meromorphic function $g$ with the same
value at two different points $x=x_0$ and $x=x_1$ satisfies
$$
\sum_{x_0\leq x <x_1} \omega_g(x)=0\,,
$$
so that $g$ has the same number of poles and roots, counting
multiplicities, in the interval $[x_0,x_1)$,
$$
\sum_{a\in [x_0,x_1), \ \omega_g(a)>0}\tau_g(a)=\sum_{b\in
[x_0,x_1), \ \omega_g(b)<0}\tau_g(b)\,.
$$
\end{proposition}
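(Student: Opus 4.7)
The plan is a straightforward telescoping argument using that $\omega_f(c)=f'(c+)-f'(c-)$ records the jump in slope at each non-differentiability point of $f$. By Remark~(2) after Definition~\ref{mero} these jump points are locally finite, so only finitely many lie in any period interval.

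First I would pick $x_0\in\mathbb{R}$ and let $c_1<c_2<\cdots<c_n$ enumerate the jump points of $f$ in $[x_0,x_0+1)$. Between consecutive jumps the slope is constant, and each $\omega_f(c_j)$ updates it by exactly its own value, so a direct telescoping along the interval delivers the identity
$$\sum_{x_0\leq x<x_0+1}\omega_f(x) \;=\; f'\bigl((x_0+1)-\bigr)-f'(x_0-).$$
If $c_1=x_0$ then the jump at $x_0$ is included in the sum on the left and moves $f'(x_0-)$ to $f'(x_0+)$; the identity still reads as above. Now I invoke periodicity: since $f(x+1)=f(x)$, the one-sided derivatives are $1$-periodic wherever they exist, so $f'\bigl((x_0+1)-\bigr)=f'(x_0-)$ and the right-hand side vanishes.

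It remains to split the vanishing sum by the sign of $\omega_f$: since $\tau_f(a)=\omega_f(a)$ at a root $a$ and $\tau_f(b)=-\omega_f(b)$ at a pole $b$, I read off
$$\sum_{a\in[x_0,x_0+1),\,\omega_f(a)>0}\tau_f(a) \;=\; \sum_{b\in[x_0,x_0+1),\,\omega_f(b)<0}\tau_f(b),$$
which is the first assertion (with $x_0=0$). The same telescoping handles the ``more general'' version: the displayed identity above holds verbatim for any tropical meromorphic $g$ on $[x_0,x_1)$, and its right-hand side is zero precisely when $g'(x_0-)=g'(x_1-)$, which is automatic in the periodic setting and serves as the natural companion to the hypothesis $g(x_0)=g(x_1)$ in general. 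The only subtlety worth flagging is purely cosmetic bookkeeping at the half-open endpoints; it is resolved uniformly by the periodicity of the one-sided derivatives, so no genuine obstacle arises.
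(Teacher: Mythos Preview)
Your proof is essentially the same telescoping argument as the paper's: enumerate the slope discontinuities in a period, sum the jumps $\omega_f(c_j)=f'(c_j+)-f'(c_j-)$, telescope to a difference of boundary one-sided derivatives, and invoke $1$-periodicity of $f'$ to make that difference vanish. You are also right to be cautious about the ``more general'' clause: the telescoping genuinely yields $g'(x_1-)-g'(x_0-)$, which is zero under periodicity but does \emph{not} follow from $g(x_0)=g(x_1)$ alone (take $g$ with slope $1$ on $(-\infty,0]$, slope $0$ on $[0,1]$, and slope $1$ on $[1,\infty)$; then $g(0)=g(1)$ but the sum over $[0,1)$ is $-1$). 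The paper's own proof treats only the periodic case and never returns to the general clause, so your hedge there is appropriate rather than a defect.
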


\begin{proof} In fact, any nonconstant tropical meromorphic $1$-periodic
function $f(x)$ has the same value at $x=0$ and $x=1$. Let
$0<c_1<\ldots <c_K<1$ be all the elements of the set
$(\text{supp}~\omega_f)\cap (0,1)$. Then we have
\begin{eqnarray*}
\sum_{j=1}^K \omega_f(c_j) &=&\sum_{j=1}^K\left\{
f'\left(\frac{c_{j+1}+c_{j}}{2}\right)
                      -f'\left(\frac{c_j+c_{j-1}}{2}\right)\right\}\\
&=&f'\left(\frac{c_{K+1}+c_K}{2}\right)-f'\left(\frac{c_1+c_0}{2}\right)\,,
\end{eqnarray*}
where we put $c_0=0$ and $c_{K+1}=1$. It is not difficult to show
that the number on the right-hand side is equal to $-\omega_f(0)$
since $f'\bigl((c_{K+1}+c_{K})/2\bigr)=f'\bigl((0+c_{K}-1)/2\bigr)$
by the $1$-periodicity of $f$. Therefore, the assertion follows,
independently of whether $0\in \text{supp}~\omega_f$ or not.
\end{proof}

As a solution of equation $f(x+1)=f(x)$, each tropical meromorphic
$1$-periodic function can be explicitly constructed in the following
manner.

\medskip

We first consider a simple tropical meromorphic $1$-periodic
function defined by
\begin{eqnarray*}
\pi^{(a,b)}(x) &:=& \frac{1}{a+b} \max \left\{ \, a(x-[x]), -b((x-[x])-1) \, \right\}\\
         &=& \left\{\bigl(a(x-[x])\bigr) \oplus \bigl(-b(x-[x])-1)\bigr)\right\}^{\otimes 1/(a+b)}
\end{eqnarray*}
for arbitrary parameters $a, b\in\mathbb{R}_{<0}$ (or `$\min$' for
$a, b\in\mathbb{R}_{>0}$). Here $[x]$ denotes the floor function of
$x$, that is, the greatest integer which does not exceed the value
$x$. For example, $\pi^{(-1,-1)}(x)=\frac{1}{2}\min\{x-[x],
(-x)-[(-x)]\}$, which has an isosceles sawtooth waveform with
width~$1$ and height ~$1/4$. Note that
$$
\pi^{(a,b)}(n)=0 \quad \text{and} \quad
\pi^{(a,b)}\left(\frac{b}{a+b}+n\right)
     =\frac{ab}{(a+b)^2},
$$
$$
\omega_{\pi^{(a,b)}}(n)=1 \quad \text{and}  \quad
\omega_{\pi^{(a,b)}}\left(\frac{b}{a+b}+n\right)=-1
$$
for any integer $n$, while $\omega_{\pi^{(a,b)}}(x)=0$ otherwise.
This $\pi^{(a,b)}(x)$ is a tropical meromorphic $1$-periodic
function with the single simple zero at $0\mod 1$ and the single
simple pole at $b/(a+b)\mod 1$ in each periodic interval. For each
value $c<ab/(a+b)^2$, the function $\pi_c^{(a,b)}(x):=1\oslash
\bigl(\pi^{(a,b)}(x)\oplus c \bigr)=-\max\{\pi^{(a,b)}(x), c\}$ has
the single simple zero at $b/(a+b)\mod 1$ with
$$
\omega_{\pi_c^{(a,b)}(x)}\left(\frac{a}{a+n}+n\right)=1 \quad (n \in
\mathbb{Z})
$$
and two poles at $c\frac{a+b}{a}\mod 1$ and $1-c\frac{a+b}{b}\mod 1$
in each periodic interval with
$$
\omega_{\pi_c^{(a,b)}(x)}\left(c\frac{a+b}{a}+n\right)=-\frac{a}{a+b}
\quad \text{and}
$$
$$
\omega_{\pi_c^{(a,b)}(x)}\left(1-c\frac{a+b}{b}+n\right)=-\frac{b}{a+b}
\quad (n\in\mathbb{Z})\,,
$$
respectively.

\medskip

In general, any non-constant tropical meromorphic $1$-periodic
function $f(x)$ can be represented as an $\mathbb{R}$-linear
combination of such functions $\pi^{(a,b)}(x)$. In fact, without
loss of generality, we may assume that the support of $\omega_f(x)$
in the interval $[0,1)$ consists of $K(\geq 1)$ points $0<c_1<\cdots
<c_K <1$. This finiteness comes from the property of piecewise
linearity of our functions, and this implies the fact that they are
of growth order $2$.

Then take the parameters $(a_k,b_k)$ so that $c_k=b_k/(a_k+b_k)$ for
$k=1,\ldots, K$ and define
\begin{equation} \label{func:1}
\hat{f}(x)=-\sum_{k=1}^K \omega_f(c_k) \pi^{(a_k,b_k)}(x)+f(0)\,.
\end{equation}

Now we note any finite sum of tropical meromorphic functions is
again a tropical meromorphic function and $\omega_f(x)$ has the
linearity property like $\omega_{\sum_j A_jf_j}(x)=\sum_j
A_j\omega_{f_j}(x)$ for the real constants $A_j$. Further we see

\begin{proposition} \label{prop:2}
Two tropical meromorphic functions $f(x)$ and $g(x)$ on a closed
interval $\Delta\subset\mathbb{R}$ satisfy the relation
$$
\omega_f(x)-\omega_g(x)\equiv 0
$$
if and only if $f(x)-g(x)$ is a linear function on $\Delta$.
\end{proposition}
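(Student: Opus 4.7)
\textbf{Proof plan for Proposition \ref{prop:2}.} The plan is to exploit the linearity of the jump functional $\omega$ already observed in the proof of Lemma~\ref{lemma4}, namely $\omega_{f-g}(x)=\omega_f(x)-\omega_g(x)$, and then reduce the statement to: a tropical meromorphic function $h$ on $\Delta$ with $\omega_h\equiv 0$ must be affine linear. Setting $h:=f-g$, the biconditional amounts to showing that $h$ is affine linear on $\Delta$ if and only if $\omega_h\equiv 0$ on $\Delta$.

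The easy direction ($\Leftarrow$) is immediate: if $h$ is affine linear on $\Delta$, then $h'$ exists and is constant, so both one-sided derivatives coincide at every point and $\omega_h(x)=0$ for all $x\in\Delta$.

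For the nontrivial direction ($\Rightarrow$), I would first record that the breakpoints of $h$ (the points where $h'$ fails to exist) are contained in the union of the breakpoint sets of $f$ and of $g$, and by Remark (2) following Definition~\ref{mero} this union has no accumulation point in $\Delta$. Thus on $\Delta$ we can enumerate the candidate breakpoints of $h$ as an at most countable set $\{x_j\}$ with no accumulation in $\Delta$; on each open sub-interval of $\Delta$ between consecutive $x_j$'s, $h$ is affine linear with some constant slope $m_j$. At each $x_j$, the hypothesis $\omega_h(x_j)=0$ forces $\lim_{\varepsilon\to 0+}h'(x_j+\varepsilon)=\lim_{\varepsilon\to 0+}h'(x_j-\varepsilon)$, i.e., the right slope equals the left slope, so the constant slopes on the two adjacent pieces are equal.

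The main point, and really the only subtlety, is to propagate this local equality of slopes to a global one on the whole interval $\Delta$. I would handle this by a straightforward connectedness argument: fix any $x_0\in\Delta$ not a breakpoint and let $m_0$ be the slope of $h$ near $x_0$; the set of $x\in\Delta$ at which $h$ admits $m_0$ as both one-sided derivative is nonempty, open (points in the interior of a linear piece), and closed in $\Delta$ (since at any candidate breakpoint the two adjacent slopes are forced to agree by $\omega_h=0$, and continuity of $h$ transfers the slope across). Hence this set equals $\Delta$, so $h$ has constant slope $m_0$ throughout $\Delta$, and by continuity $h(x)=m_0(x-x_0)+h(x_0)$ is affine linear on $\Delta$, completing the proof. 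I do not expect any genuine obstacle beyond the bookkeeping with the locally finite breakpoint set, which is guaranteed by the hypothesis that $f$ and $g$ are tropical meromorphic.
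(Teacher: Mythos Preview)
Your proposal is correct and follows essentially the same approach as the paper: reduce via $\omega_{f-g}=\omega_f-\omega_g$ to the claim that a tropical meromorphic $h$ with $\omega_h\equiv 0$ is affine, then observe that $\omega_h(x)=0$ forces $h'(x+)=h'(x-)$ at every point, so $h$ is everywhere differentiable and hence (being piecewise linear) has constant derivative. The paper dispatches the constancy of $h'$ in one sentence, while you spell it out via a connectedness argument on the locally finite breakpoint set; this extra care is fine but not strictly needed.
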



\begin{proof} If a tropical meromorphic function $h(x)$ satisfies
$\omega_h(x)\equiv 0$ on $\mathbb{R}$, then by definition
$h'(x+)=h'(x-)$ holds for any $x\in\mathbb{R}$, that is, $h(x)$ is
differentiable on the whole $\mathbb{R}$ since it is continuous on
the whole $\mathbb{R}$. Then the assumption $\omega_h(x)\equiv 0$
implies the derivative $h'(x)$ of the tropical meromorphic function
$h(x)$ is a constant on $\mathbb{R}$. Of course, the $\omega$ of any
linear function vanishes identically.
\end{proof}

Returning to the tropical meromorphic $1$-periodic function
$\hat{f}(x)$ in (\ref{func:1}), we observe that
$$
\omega_{\hat{f}}(c_{\ell}) =-\omega_f(c_{\ell})\pi^{(a_{\ell},
b_{\ell})}(c_{\ell}) =\omega_f(c_{\ell})\,, \quad 1\leq \ell \leq K
$$
and further by Proposition~\ref{prop:1},
$$
\omega_{\hat{f}}(0) =-\sum_{k=1}^K \omega_f(c_{k})\pi^{(a_{k},
b_{k})}(0) =-\sum_{k=1}^K \omega_f(c_{k}) =0=\omega_f(0)\,,
$$
so that the relation $\omega_f(x)-\omega_{\hat{f}}(x)\equiv 0$ holds
on $\mathbb{R}$. Proposition~\ref{prop:2} implies
$f(x)=\hat{f}(x)+Ax+B$ for some real constants $A$ and $B$, which
are indeed determined as $B=0$ and
$$
A=\frac{f(c_1)-f(0)}{c_1}+(1-c_1)\omega_f(c_1)
$$
by using $\hat{f}(0)=f(0)$ and
\begin{eqnarray*}
\hat{f}(c_1)&=&\hat{f}\left(\frac{b_1}{a_1+b_1}\right)
= -\omega_f(c_1) \pi^{(a_1,b_1)} \left(\frac{b_1}{a_1+b_1}\right)+f(0)\\
&=& -\omega_f(c_1)\frac{a_1b_1}{(a_1+b_1)^2}+f(0) =
c_1(c_1-1)\omega_f(c_1)+f(0)\,,
\end{eqnarray*}
respectively. Hence the representation formula for the non-constant
tropical meromorphic $1$-periodic function $f(x)$ must be
$$
f(x) =-\sum_{k=1}^K \omega_f(c_k)
\pi^{(a_k,b_k)}(x)+(1-c_1)\omega_f(c_1)x
+\frac{f(c_1)-f(0)}{c_1}x+f(0)\,.
$$

\begin{example} In the above expression, taking $K=1$, $(a_1,b_1)=(-1,-1)$, $(a_2,b_2)=(-1,-2)$ and
thus $c_1=1/2$, $c_2=2/3$, and further $\omega_f(c_1)=1/3$,
$\omega_f(c_2)=-1/2$ and so on, we have the tropical meromorphic
$1$-periodic function
\begin{eqnarray*}
f(x)&=&-\frac{1}{3}\pi^{(-1,-1)}(x)+\frac{1}{2}\pi^{(-1,-2)}(x)\\
&=& \left\{
\begin{array}{cl}
0 & (0\leq x-[x] \leq 1/2)\,, \\
\frac{1}{3}(x-[x])-\frac{1}{6} & (1/2\leq (x-[x]) \leq  2/3)\,, \\
-\frac{1}{6}(x-[x])+\frac{1}{6} & (2/3\leq (x-[x]) \leq 1)\,.
\end{array}
\right.
\end{eqnarray*}
Thus this $f(x)$ satisfies an equation of the form
$$
\max \bigl\{ y-\max\{0, \frac{1}{3}(x-[x])-\frac{1}{6}\}\,, \
                  y-\max\{0, -\frac{1}{6}(x-[x])+\frac{1}{6} \}  \bigr\}=0
                  \quad \text{or}
$$
\begin{eqnarray*}
& & \max \bigl\{ 6y-\max\{0, 2(x-[x])-1\}\,, \
                  6y-\max\{0, -(x-[x])+1 \}  \bigr\} \\
&=& \max \bigl\{ 6y-1-\max\{-1, 2(x-[x]) \}\,, \
                  6y+1-\max\{-1, -(x-[x]) \}  \bigr\} \\
&=&0\,,
\end{eqnarray*}
which may be written as an ultra-discrete equation
$$
\bigg\{ (-1)\otimes y^{\otimes 6}\oslash \Bigl((-1)\oplus
(x-[x])^{\otimes 2}\Bigr) \bigg\} \oplus \bigg\{ 1\otimes y^{\otimes
6} \oslash \Bigl((-1)\oplus (x-[x])^{\otimes (-1)}\Bigr) \bigg\}
=0\,.
$$
\end{example}

Finally, we see

\begin{proposition} \label{prop:3} Any non-constant tropical periodic
meromorphic function $f$ satisfies $T(r,f)\asymp \kappa r^{2}$ for
some $\kappa>0$, hence is of order two.
\end{proposition}

\begin{proof} Due to Proposition~\ref{prop:1} as well as
Proposition~\ref{prop:0}, we need only to prove $T(r,
\pi^{(a,b)})=\frac{r^2}{2}+O(r)$. Since $n(r, -
\pi^{(a,b)})=2[r]+1$, we have $N(r,- \pi^{(a,b)})=\int_0^r
[t]dt+r/2\sim r^2/2+O(r)$. On the other hand,
$$
m(r, - \pi^{(a,b)})\leq |\pi^{(a,b)}(r)|+|\pi^{(a,b)}(-r)|\leq
\frac{2ab}{(a+b)^2}\leq \frac{1}{2}\,.
$$
\end{proof}

\section{Tropical counterpart to the exponential
function}\label{tropexp}

We start our construction by considering certain special tropical
meromorphic functions, which are reminiscent to exponential
functions over the usual algebra.

\begin{definition} \label{def:1}
Let $\alpha$ be a real number with $|\alpha|>1$. Define a function
$e_{\alpha}(x)$ on $\mathbb{R}$ by
$$
e_{\alpha}(x):=\alpha^{[x]}(x-[x])+\sum_{j=-\infty}^{[x]-1} \alpha^j
=\alpha^{[x]}\left(x-[x]+\frac{1}{\alpha-1}\right)\,.
$$
\end{definition}

Then we see

\begin{proposition}\label{alfa}
The function $e_{\alpha}(x)$ is tropical meromorphic on $\mathbb{R}$
satisfying
\begin{itemize}
\item $e_{\alpha}(m)=\alpha^{m}/(\alpha-1)$ for each $m\in\mathbb{Z}$,
\item $e_{\alpha}(x) = x+\frac{1}{\alpha-1}$ for any $x\in [0,1)$, and
\item the functional equation $y(x+1)=y(x)^{\otimes \alpha}$ on the whole $\mathbb{R}$.
\end{itemize}
\end{proposition}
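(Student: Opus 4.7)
The plan is to verify each of the four assertions (tropical meromorphicity plus the three bulleted properties) directly from the closed form $e_\alpha(x)=\alpha^{[x]}\bigl(x-[x]+\tfrac{1}{\alpha-1}\bigr)$. First I would confirm that the two expressions in Definition~\ref{def:1} agree: the geometric sum $\sum_{j=-\infty}^{[x]-1}\alpha^j$ converges for $|\alpha|>1$ (its ratio is $1/\alpha$, of absolute value less than $1$) to $\alpha^{[x]}/(\alpha-1)$, and when this is added to $\alpha^{[x]}(x-[x])$ it yields the claimed factored form. This closed form is what makes every subsequent verification essentially a one-line computation.

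For the tropical meromorphicity, I would fix $n\in\mathbb{Z}$ and restrict to the half-open interval $[n,n+1)$, on which $[x]=n$ and hence $e_\alpha(x)=\alpha^n(x-n)+\alpha^n/(\alpha-1)$ is an affine function with slope $\alpha^n$. Continuity at the break point $x=n$ reduces to the identity $\alpha^{n-1}\bigl(1+\tfrac{1}{\alpha-1}\bigr)=\alpha^n/(\alpha-1)$, while the jump of the one-sided derivatives at $n$ equals $\alpha^n-\alpha^{n-1}=\alpha^{n-1}(\alpha-1)\ne 0$. Hence $e_\alpha$ is continuous and piecewise linear with break points only at integers, so tropical meromorphic in the sense of Definition~\ref{mero}, with $\omega_{e_\alpha}(n)=\alpha^{n-1}(\alpha-1)$ for each $n\in\mathbb{Z}$.

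The three bullet points are then direct substitutions into the closed form. For $x=m\in\mathbb{Z}$ we have $[m]=m$ and $m-[m]=0$, so $e_\alpha(m)=\alpha^m/(\alpha-1)$. For $x\in[0,1)$, $[x]=0$ gives $e_\alpha(x)=x+1/(\alpha-1)$. Finally, the identities $[x+1]=[x]+1$ and $(x+1)-[x+1]=x-[x]$ yield
\[
e_\alpha(x+1)=\alpha^{[x]+1}\Bigl(x-[x]+\tfrac{1}{\alpha-1}\Bigr)=\alpha\cdot e_\alpha(x)=e_\alpha(x)^{\otimes\alpha},
\]
which is the functional equation on all of $\mathbb{R}$.

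I do not anticipate any real obstacle. The one point worth flagging is the case $\alpha<-1$: then the slopes $\alpha^n$ alternate in sign and the series only converges absolutely rather than having positive terms, but the continuity identity and the nonvanishing of the jump at each integer are still valid, so the tropical meromorphic structure goes through unchanged.
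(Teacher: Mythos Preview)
Your proposal is correct and follows essentially the same route as the paper: direct verification of piecewise linearity, continuity at the integer break points, and the three bulleted identities from the defining formula. The only cosmetic difference is that the paper carries out the checks (including the functional equation and continuity) via the series form $\sum_{j=-\infty}^{[x]-1}\alpha^j$, whereas you first collapse the series to the closed form $\alpha^{[x]}\bigl(x-[x]+\tfrac{1}{\alpha-1}\bigr)$ and then substitute; your extra remarks on $\omega_{e_\alpha}(n)$ and the $\alpha<-1$ case are welcome but not needed for the statement.
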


\begin{proof} The two first assertions trivially follow from Definition
\ref{def:1}. The last assertion is verified by a straightforward
computation:
$$
e_{\alpha}(m) =\sum_{j=-\infty}^{m-1} \alpha^j
=\alpha^{m-1}\sum_{j=-\infty}^0 \alpha^j
=\alpha^{m-1}\sum_{j=0}^{\infty} \left(\frac{1}{\alpha}\right)^j
=\frac{\alpha^m}{\alpha-1}
$$
when $m\in\mathbb{Z}$,
$$
e_{\alpha}(x)=\sum_{j=-\infty}^{-1} \alpha^j +x =
x+\frac{1}{\alpha-1}
$$
when $x\in [0,1)$, and on $\mathbb{R}$
\begin{eqnarray*}
e_{\alpha}(x+1)&=& \sum_{j=-\infty}^{[x+1]-1} \alpha^j +\alpha^{[x+1]}(x+1-[x+1])\\
                   &=& \sum_{j=-\infty}^{[x]} \alpha^j +\alpha^{[x]+1}(x-[x])\\
                   &=& \alpha\left(\sum_{j=-\infty}^{[x]-1}\alpha^j
                         +\alpha^{[x]}(x-[x])\right)=\alpha\, e_{\alpha}(x)\,.
\end{eqnarray*}
It remains to verify that $e_{\alpha}(x)$ is continuous at integer
points $x=m\in\mathbb{Z}$. This follows by taking $\varepsilon\in
(0,1)$ and and observing that
\begin{eqnarray*}
e_{\alpha}(m+\varepsilon) &=&\sum_{j=-\infty}^{m-1}\alpha^j +
\alpha^{m}(m+\varepsilon-m)
=\frac{\alpha^m}{\alpha-1}+\varepsilon \alpha^{m}\quad \text{and}\\
e_{\alpha}(m-\varepsilon) &=&\sum_{j=-\infty}^{m-1-1}\alpha^j +
\alpha^{m-1}(m-\varepsilon-m+1)
=\frac{\alpha^m}{\alpha-1}-\varepsilon \alpha^{m-1}\,.
\end{eqnarray*}
\end{proof}

In a similar way, for a real number with $|\beta|<1$, we consider a
function
\begin{eqnarray*}
e_{\beta}(x)&:=&\sum_{j=[x]}^{\infty} \beta^j -\beta^{[x]}(x-[x])
=\sum_{j=[x]+1}^{\infty} \beta^j +\beta^{[x]}(1-x+[x])\\
&=&\beta^{[x]}\left(\frac{1}{1-\beta}-x+[x]\right)\,.
\end{eqnarray*}
Then we similarly obtain
\begin{proposition}\label{beeta}
This function $e_{\beta}(x)$ is also tropical meromorphic on
$\mathbb{R}$ and satisfies
\begin{itemize}
\item $e_{\beta}(m)=\beta^{m}/(1-\beta)$ for each $m\in\mathbb{Z}$,
\item $e_{\beta}(x) = -x+\frac{1}{1-\beta}$ for any $x\in [0,1)$, and
\item the functional equation $y(x+1)=y(x)^{\otimes \beta}$ on the whole $\mathbb{R}$.
\end{itemize}
\end{proposition}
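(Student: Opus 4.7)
The plan is to mirror the proof of Proposition~\ref{alfa} step by step, since everything transposes cleanly: when $|\beta|<1$ the geometric tail $\sum_{j\geq[x]}\beta^{j}$ converges at $+\infty$ instead of at $-\infty$, and the construction is otherwise identical. Before starting I would note that the two displayed expressions for $e_{\beta}$ in the proposition agree, because $\sum_{j=[x]}^{\infty}\beta^{j}=\beta^{[x]}/(1-\beta)$, so that
$$
\sum_{j=[x]}^{\infty}\beta^{j}-\beta^{[x]}(x-[x])=\beta^{[x]}\Bigl(\tfrac{1}{1-\beta}-x+[x]\Bigr).
$$

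Next, I would read off the two pointwise assertions directly from the closed form. Setting $x=m\in\mathbb{Z}$ gives $[x]=m$ and $x-[x]=0$, so $e_{\beta}(m)=\beta^{m}/(1-\beta)$; setting $x\in[0,1)$ gives $[x]=0$, so $e_{\beta}(x)=\tfrac{1}{1-\beta}-x$, as asserted.

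For the functional equation $y(x+1)=y(x)^{\otimes\beta}$, i.e.\ $e_{\beta}(x+1)=\beta\, e_{\beta}(x)$, I would use $[x+1]=[x]+1$ and $(x+1)-[x+1]=x-[x]$ to compute
\begin{eqnarray*}
e_{\beta}(x+1)&=&\beta^{[x]+1}\Bigl(\tfrac{1}{1-\beta}-(x+1)+([x]+1)\Bigr)\\
&=&\beta\cdot\beta^{[x]}\Bigl(\tfrac{1}{1-\beta}-x+[x]\Bigr)\ =\ \beta\, e_{\beta}(x).
\end{eqnarray*}

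Finally, I would check that $e_{\beta}$ is tropical meromorphic in the sense of Definition~\ref{mero}, i.e.\ continuous and piecewise linear on $\mathbb{R}$. On each interval $[m,m+1)$ with $m\in\mathbb{Z}$, $[x]=m$ is constant, so $e_{\beta}(x)=\beta^{m}\bigl(\tfrac{1}{1-\beta}-x+m\bigr)$ is linear in $x$. Continuity at an integer $m$ is verified by computing, for $\varepsilon\in(0,1)$,
\begin{eqnarray*}
e_{\beta}(m+\varepsilon)&=&\beta^{m}\Bigl(\tfrac{1}{1-\beta}-\varepsilon\Bigr),\\
e_{\beta}(m-\varepsilon)&=&\beta^{m-1}\Bigl(\tfrac{\beta}{1-\beta}+\varepsilon\Bigr),
\end{eqnarray*}
both of which approach $\beta^{m}/(1-\beta)=e_{\beta}(m)$ as $\varepsilon\to 0^{+}$. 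No real obstacle is anticipated; the proposition is a direct parallel of Proposition~\ref{alfa}, the only book-keeping point being the rewriting of the geometric tail so that the closed form exposes the factor $\beta^{[x]}$ cleanly.
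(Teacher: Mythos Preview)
Your proposal is correct and follows essentially the same approach as the paper: both verify the first two bullets directly, check the functional equation via $[x+1]=[x]+1$, and then confirm continuity at integers by computing $e_{\beta}(m\pm\varepsilon)$. The only cosmetic difference is that you work with the closed form $\beta^{[x]}\bigl(\tfrac{1}{1-\beta}-x+[x]\bigr)$ throughout, while the paper uses the series expression $\sum_{j\ge[x]}\beta^{j}-\beta^{[x]}(x-[x])$; since you begin by reconciling the two forms, this is not a substantive divergence.
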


\begin{proof} Again, the last assertion is the only to be checked:
\begin{eqnarray*}
e_{\beta}(x+1)&=& \sum_{j=[x+1]}^{\infty} \beta^j -\beta^{[x+1]}(x+1-[x+1])\\
              &=& \beta\left(\sum_{j=[x]}^{\infty} \beta^j -\beta^{[x]}(x-[x]) \right)
                    =\beta\, e_{\beta}(x)\,.
\end{eqnarray*}
As for the continuity at integer points $x=m\in\mathbb{Z}$, we may
take $\varepsilon\in (0,1)$, resulting in
\begin{eqnarray*}
e_{\beta}(m+\varepsilon) &=&\sum_{j=m}^{\infty}\beta^j -
\beta^{m}(m+\varepsilon-m)
=\frac{\beta^{m}}{1-\beta}-\varepsilon \beta^{m}\quad \text{and}\\
e_{\beta}(m-\varepsilon) &=&\sum_{j=m-1}^{\infty}\beta^j -
\beta^{m-1}(m-\varepsilon-m+1)
=\frac{\beta^{m}}{1-\beta}+\varepsilon \beta^{m-1}\,.
\end{eqnarray*}
\end{proof}

As for the connection between $e_{\alpha}(x)$ with $|\alpha|>1$ and
$e_{\beta}(x)$ with $|\beta|<1$, we obtain

\begin{proposition}\label{alfabeta} Suppose $\alpha\neq\pm 1$. Then
\begin{itemize}
\item $e_{\alpha}(-x)=\frac{1}{\alpha}e_{1/\alpha}(x)$, and
\item $e_{\alpha}(0)=\frac{1}{\alpha}e_{1/\alpha}(0).$
\end{itemize}
\end{proposition}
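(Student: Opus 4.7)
The plan is to prove both assertions by a direct computation starting from the closed-form expressions given in Propositions~\ref{alfa} and \ref{beeta}, namely
\[
e_{\alpha}(x)=\alpha^{[x]}\left(x-[x]+\tfrac{1}{\alpha-1}\right), \qquad
e_{1/\alpha}(x)=\alpha^{-[x]}\left(\tfrac{1}{1-1/\alpha}-x+[x]\right),
\]
the second being applicable because $|\alpha|>1$ forces $|1/\alpha|<1$. The second bullet is the special case $x=0$ of the first (both sides equal $1/(\alpha-1)$ by a one-line computation), so the content is entirely in proving $e_{\alpha}(-x)=\frac{1}{\alpha}\,e_{1/\alpha}(x)$ for every $x\in\mathbb{R}$.

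First I would handle the case $x=m\in\mathbb{Z}$ separately, where $e_{\alpha}(-m)=\alpha^{-m}/(\alpha-1)$ by the first bullet of Proposition~\ref{alfa}, and $\frac{1}{\alpha}e_{1/\alpha}(m)=\frac{1}{\alpha}\cdot\frac{(1/\alpha)^m}{1-1/\alpha}=\frac{\alpha^{-m}}{\alpha-1}$ by the first bullet of Proposition~\ref{beeta}. For $x\notin\mathbb{Z}$ the key identities are $[-x]=-[x]-1$ and $-x-[-x]=1-(x-[x])$. Substituting these into the formula for $e_{\alpha}(-x)$ gives
\[
e_{\alpha}(-x)=\alpha^{-[x]-1}\left(1-(x-[x])+\tfrac{1}{\alpha-1}\right)
=\alpha^{-[x]-1}\left(\tfrac{\alpha}{\alpha-1}-(x-[x])\right).
\]
On the other side, pulling the factor $1/\alpha$ inside,
\[
\tfrac{1}{\alpha}e_{1/\alpha}(x)=\alpha^{-[x]-1}\left(\tfrac{\alpha}{\alpha-1}-(x-[x])\right),
\]
using $\frac{1}{1-1/\alpha}=\frac{\alpha}{\alpha-1}$. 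The two expressions coincide, establishing the identity on $\mathbb{R}\setminus\mathbb{Z}$, and then continuity (already guaranteed by Propositions~\ref{alfa} and \ref{beeta}) extends the equality to all of $\mathbb{R}$.

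The only delicate point is the floor identity at integers, and that is precisely why I would split off the integer case at the outset rather than rely on ``$[-x]=-[x]-1$'' blindly; apart from this bookkeeping, the proof is routine arithmetic. No other machinery from the paper is needed.
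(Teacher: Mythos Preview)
Your proof is correct and follows essentially the same route as the paper: both use the closed-form expressions for $e_{\alpha}$ and $e_{1/\alpha}$ together with the floor identity $[-x]=-[x]-1$, and both note that the second bullet is the trivial special case $x=0$. Your version is in fact more careful than the paper's one-line proof, since you explicitly separate off the integer case (where $[-x]=-[x]-1$ fails) and then invoke continuity, whereas the paper applies the floor identity without comment.
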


\begin{proof} The first assertion immediately follows from the
expressions for $e_{\alpha}(x)$ and $e_{1/\alpha}(x)$ and from
$[-x]=-[x]-1$. The second assertion is trivial.
\end{proof}

\bigskip

\begin{remark} The slopes of $e_{\alpha}(x)$ range over the infinite set $\{
\alpha^j \, | \, -\infty<j<+\infty\}$ for all $\alpha\neq\pm 1$.
\end{remark}

\begin{proposition}\label{order} The function $e_{\alpha}(x)$, $\alpha\neq\pm 1$, is of infinite order
 and, in fact, of hyper-order one.
\end{proposition}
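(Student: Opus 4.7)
My plan is to show directly that $T(r, e_\alpha) \asymp |\alpha|^{r}$, from which both claims follow at once by computing $\log$ and $\log\log$. The symmetry $T(r, f(-\,\cdot\,)) = T(r, f)$ built into the definitions (\ref{prox})--(\ref{count}), together with the identity $e_\alpha(-x) = \frac{1}{\alpha}e_{1/\alpha}(x)$ from Proposition~\ref{alfabeta}, reduces the case $|\alpha|<1$ to the case $|\alpha|>1$ (up to a harmless constant factor via Lemma~\ref{elem}(ii) and the tropical Jensen identity), so I focus on $|\alpha|>1$.

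For the lower bound I would combine the tropical Jensen formula (\ref{jensen}), which gives $T(r,-e_\alpha) = T(r,e_\alpha) - e_\alpha(0)$, with the elementary identity $x^+ + (-x)^+ = |x|$ applied to $x = e_\alpha(\pm r)$, yielding
\[
  2\, T(r, e_\alpha) - e_\alpha(0)
  \;\geq\; m(r, e_\alpha) + m(r, -e_\alpha)
  \;=\; \tfrac{1}{2}\bigl(|e_\alpha(r)| + |e_\alpha(-r)|\bigr).
\]
Since $|e_\alpha(n)| = |\alpha|^n/|\alpha-1|$ for integer $n$ by Proposition~\ref{alfa}, evaluating at $n = \lfloor r\rfloor$ and invoking the monotonicity of $T$ from Theorem~\ref{mono}, I obtain $T(r, e_\alpha) \geq c\,|\alpha|^{r}$ for some $c>0$ and all sufficiently large~$r$.

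For the matching upper bound I would estimate $m(r, e_\alpha) \leq \frac{1}{2}\bigl(|e_\alpha(r)| + |e_\alpha(-r)|\bigr) = O(|\alpha|^{r})$ directly from the closed form in Definition~\ref{def:1}, and, for the counting term, observe that the derivative discontinuities of $e_\alpha$ sit only at integer points and carry $|\omega_{e_\alpha}(m)| = |\alpha|^{m-1}|\alpha - 1|$; summing the resulting geometric series gives $n(r, e_\alpha) + n(r, -e_\alpha) = O(|\alpha|^r)$, and integrating once produces $N(r, e_\alpha) + N(r, -e_\alpha) = O(|\alpha|^{r})$. Hence $T(r, e_\alpha) = O(|\alpha|^{r})$.

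Combining the two estimates yields $\log T(r, e_\alpha) = r\log|\alpha| + O(1)$, which forces $\rho(e_\alpha) = \infty$, and $\log\log T(r, e_\alpha) = \log r + O(1)$, which forces $\rho_2(e_\alpha) = 1$. The only delicate bookkeeping occurs when $\alpha<-1$ and the values $e_\alpha(n) = \alpha^n/(\alpha-1)$ alternate in sign, so one cannot argue directly with $e_\alpha$ alone; working throughout with the symmetric pair $(e_\alpha, -e_\alpha)$ via $f(\pm r)^+ + (-f(\pm r))^+ = |f(\pm r)|$ sidesteps this uniformly, and is really the only point where any care is needed.
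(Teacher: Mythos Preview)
Your proof is correct and takes a genuinely different route from the paper's. The paper exploits that for $\alpha>1$ the function $e_\alpha$ is strictly positive and pole-free, so $T(r,e_\alpha)=m(r,e_\alpha)$, and computes this proximity function \emph{exactly} from the closed form in Definition~\ref{def:1}, obtaining $T(r,e_\alpha)=\tfrac{1}{2}\bigl(\tfrac{1}{\alpha-1}+r-[r]+o(1)\bigr)\alpha^{[r]}$; all remaining parameter values are then dismissed in one line via Proposition~\ref{alfabeta}. You instead establish matching asymptotic bounds $T(r,e_\alpha)\asymp|\alpha|^r$ uniformly for $|\alpha|>1$, pairing $e_\alpha$ with $-e_\alpha$ through Jensen and the identity $x^{+}+(-x)^{+}=|x|$ so that no sign assumption on $e_\alpha$ is needed, and bounding the counting function explicitly via the geometric series of multiplicities. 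The paper's approach is shorter and yields an exact formula in the principal case $\alpha>1$; your approach buys robustness, treating $\alpha<-1$ directly---there $e_\alpha$ changes sign and has infinitely many poles, and the paper's one-line reduction through Proposition~\ref{alfabeta} sends such an $\alpha$ to $1/\alpha\in(-1,0)$ rather than back into the already-handled region $\alpha>1$, so your explicit estimate of $N(r,\pm e_\alpha)$ fills a step the paper leaves implicit.
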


\begin{proof} If $\alpha>1$, then $e_{\alpha}(x)$ is strictly positive and has no
poles. Moreover, for $r=m+\varepsilon$ with $m\in\mathbb{Z}$ and
$\varepsilon\in [0, 1)$, we have
\begin{eqnarray*}
T(r,e_{\alpha})=m(r, e_{\alpha}) &=&
\frac{\alpha^{m}\bigl(\varepsilon+1/(\alpha-1)\bigr)
             +\alpha^{-m-1}\bigl(1-\varepsilon+1/(\alpha-1)\bigr)}{2}\\
&=& \frac{1}{2}\left(\frac{1}{\alpha-1}+r-[r]+o(1) \right)
\alpha^{[r]}\,.
\end{eqnarray*}
Therefore, $e_{\alpha}(x)$ is of hyper-order one:
$$
\limsup_{r\to\infty} \frac{\log\log T(r, e_{\alpha})}{\log
r}=\lim_{r\to\infty}\frac{\log [r]+O(1)}{\log r}=1.
$$

\medskip

The case $\alpha <1$ immediately follows from Proposition
\ref{alfabeta}.
\end{proof}

\begin{remark} Observe that if $\alpha >1$, then
$e_{\alpha}(x) \oplus a >0$ for any $a\in\mathbb{R}$. This shows
that $m\bigl(r, 1_{\circ}\oslash (e_{\alpha}\oplus
a)\bigr)=m\bigl(r, -(e_{\alpha}\oplus a)\bigr)\equiv 0$, so that
$T\bigl(r, 1_{\circ}\oslash (e_{\alpha}\oplus a)\bigr)=N\bigl(r,
1_{\circ}\oslash (e_{\alpha}\oplus a)\bigr)$.

\medskip

On the other hand, if $\alpha<-1$, then $e_{\alpha}(x)$ has a zero
of multiplicity $\alpha^{2j}(1-1/\alpha)$ at each even integer
$x=2j$ and a pole of multiplicity $\alpha^{2j}(1-\alpha)$ at each
odd integer $x=2j+1$, since
$\omega_{e_{\alpha}}(m)=\alpha^{m}(1-1/\alpha)$ for each
$m\in\mathbb{Z}$. Thus we see that when $2\ell\leq t<2(\ell+1)$ for
some integer $\ell$, that is, when $\ell= \left[\frac{t}{2}\right]$,
then
\begin{eqnarray*}
n(t, 1_{\circ}\oslash e_{\alpha}) &=&\sum_{j=-\ell}^{\ell}
\alpha^{2j}\left(1-\frac{1}{\alpha}\right)
=\sum_{j=1}^{\ell} \left(\alpha^{2j}+\alpha^{-2j}\right)\left(1-\frac{1}{\alpha}\right)+1-\frac{1}{\alpha}\\
&=&\left(1-\frac{1}{\alpha}\right)\left\{\frac{\alpha^2}{\alpha^2-1}\alpha^{2\ell}+\frac{1}{1-\alpha^2}\alpha^{-2\ell}\right\}\\
&=&\frac{\alpha}{\alpha+1}\alpha^{2[t/2]}-\frac{1}{\alpha(\alpha+1)}\alpha^{-2[t/2]}\\
&\geq
&\frac{1}{\alpha(\alpha+1)}|\alpha|^{t}-\frac{\alpha}{\alpha+1}|\alpha|^{-t}
\end{eqnarray*}
so that $N(r, e_{\alpha})\geq
|\alpha|^r/\{2\alpha(\alpha+1)\log\alpha\}+O(1)$.
\medskip
As for the case of $e_{\beta}(x)$, its slopes again range over the
infinite set $\{ \beta^j \, | \, -\infty<j<+\infty\}$. When
$0<\beta<1$, this function has no poles and $m(r, e_{\beta})\asymp C
\beta^{-r}$. In the case of $-1<\beta<0$, it has a pole of
multiplicity $\beta^{2j}(1-1/\beta)$ at each even integer $x=2j$ and
a root of multiplicity $\beta^{2j}(1-\beta)$ at each odd integer
$x=2j+1$. In particular, taking $\beta =-1/2$, it is immediate to
obtain that
$$N(r,e_{\beta}(x))=2N(r,r,1_{\circ}\oslash e_{\beta}(x))+O(r).$$

\medskip

In order to see that the assertion of Corollary \ref{nodef} fails
for $e_{\beta}(x)$ with $\beta =-1/2$, see Figure 1 below, take
$a=-1<0=L_{e_{\beta}}$. Then the roots of $e_{\beta}(x)\oplus a$ are
the same as those of $e_{\beta}(x)$ for all $x=2j+1>0$, while for
$x=2j+1<0$, each such root of $e_{\beta}(x)$, having multiplicity
$\beta^{2j}(1-\beta)$, splits into two roots of $e_{\beta}(x)\oplus
a$, with the sum of their multiplicities being equal to
$\beta^{2j}(1-\beta)$, see Figure 1 and Figure 2 below. Therefore,
we have
\begin{equation}\label{defja}
T(r,e_{\beta}(x))\geq
N(r,e_{\beta}(x))=2N(r,1_{\circ}\oslash(e_{\beta}(x)\oplus a))+O(r).
\end{equation}

\begin{figure}[h!]
  \begin{center}
    \includegraphics{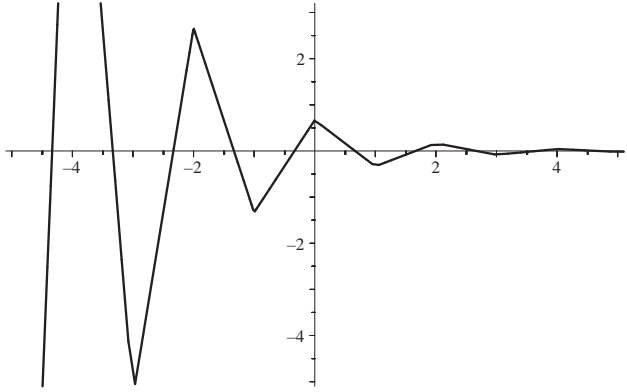}
  \end{center}
  \caption{Function $e_{-1/2}(x)$.}
  \label{fig:ehalf}
\end{figure}

\begin{figure}[h!]
  \begin{center}
    \includegraphics{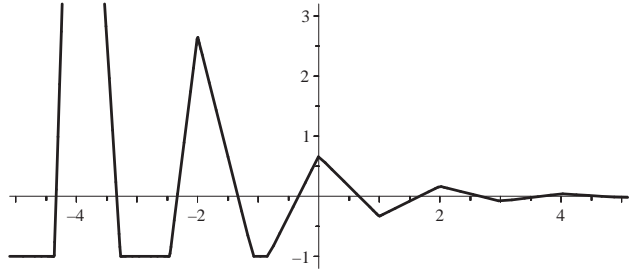}
  \end{center}
  \caption{Function $e_{-1/2}(x) \oplus (-1)$.}
  \label{fig:sols}
\end{figure}

More generally, the same conclusion as in (\ref{defja}) follows for
all $a<0$. In particular, this means that each $a<0$ is a deficient
value for $e_{\beta}(x)$ in the sense that
$$1-\limsup_{r\rightarrow}\frac{N(r,1_{\circ}\oslash (e_{\beta}(x)\oplus a))}{T(r,e_{\beta}(x))}\geq 1/2>0.$$
\end{remark}

\begin{remark} To prevent misinterpretations in what
follows, let $e_{\alpha}(x)$ and $e_{\beta}(x)$ be two tropical
exponential functions with $\alpha,\beta\neq 1$, $\alpha\neq\beta$
and let $s\in [0,1)$ be a fixed real number. Then it is immediate to
verify that we get for the Casoratian determinants
$$\left|\begin{array}{cc}
      e_{\alpha}(x) & e_{\alpha}(x-s) \\
      e_{\alpha}(x+1) & e_{\alpha}(x-s+1)
    \end{array}\right|= 0,
$$
and
$$\left|\begin{array}{cc}
      e_{\alpha}(x) & e_{\beta}(x) \\
      e_{\alpha}(x+1) & e_{\beta}(x+1)
    \end{array}\right|\neq 0.
$$
However, one of the tropical exponentials in the pairs
$e_{\alpha}(x),e_{\alpha}(x-s)$ and $e_{\alpha}(x),e_{\beta}(x)$ is
not a constant multiple of the other one. Recall, however, that
$\alpha e_{\alpha}(-x)=e_{\beta}(x)$ by Proposition \ref{alfabeta}
whenever $\beta=1/\alpha$. Therefore, the standard presentation of
linear (in)dependence of linear difference equations as in \cite{E},
say, does not carry over as such to the tropical setting of
meromorphic functions.
\end{remark}

\section{An application to ultra-discrete equations: first order}\label{infinite}

In this section, we consider ultra-discrete equations of type
\begin{equation} 
y(x+1)=y(x)^{\otimes c} =c\, y(x) \qquad (c\in\mathbb{R}).
\end{equation}
First restricting ourselves to the case
\begin{equation} \label{eqn:2}
y(x+1)=y(x)^{\otimes n} =n\, y(x) \qquad (n\in\mathbb{Z}),
\end{equation}
the existence of non-constant tropical meromorphic solutions depends
on the value of $n\in\mathbb{Z}$. In fact, Halburd and Southall
\cite{HS}, Lemma~4.1, have shown that equation (\ref{eqn:2}) admits
a non-constant tropical meromorphic solution (in the case of integer
slopes) on $\mathbb{R}$ if and only if $n=\pm 1$. As for the case of
real slopes, we now proceed to proving

\begin{theorem} \label{prop:first-order} Equation $y(x+1)=y(x)^{\otimes c}$
with $c\in\mathbb{R}\setminus\{0\}$ admits a non-constant tropical
meromorphic function on $\mathbb{R}$ of hyper-order $<1$ if and only
if $c=\pm 1$. If $c=1$, resp. $c=-1$, non-constant tropical
meromorphic solutions to (\ref{eqn:c}) are $1-$periodic, resp.
$2-$periodic, anti-$1$-periodic, tropical meromorphic functions.
Given an arbitrary tropical meromorphic solution $f$ to
(\ref{eqn:c}) with discontinuities of slope at $x_{1},\ldots ,x_{q}$
in $[0,1)$, if $c\neq 0, \pm 1$, then $f$ may be represented as
$$
f(x)=\rho(c)\sum_{j=1}^q \omega_f(x_j) e_{c}(x-x_j) \quad
\text{with} \ \rho(c)= \left\{
\begin{array}{cc}
c-1 & (|c|>1), \\
1-c & (0<|c|<1)\,,
\end{array}
\right.
$$
\end{theorem}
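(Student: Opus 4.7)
The strategy is to establish the three claims in turn. For the \emph{if} direction of the existence statement, note that $c=1$ reduces the equation to $f(x+1)=f(x)$, so solutions are precisely the nonconstant $1$-periodic tropical meromorphic functions, and by Proposition \ref{prop:3} these are of order two, hence of hyper-order zero. For $c=-1$ the equation says $f(x+1)=-f(x)$, so any solution is anti-$1$-periodic and thus $2$-periodic (\emph{a fortiori} of hyper-order zero); such functions are easily constructed by combining translates of the $\pi^{(a,b)}$ building blocks of Section \ref{periodic} with alternating signs.

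For the \emph{only if} direction, suppose $c\neq\pm 1$ and let $f$ be a nonconstant tropical meromorphic solution. Differentiating the equation at points of differentiability gives $f'(x+1)=cf'(x)$, and passing to one-sided limits yields
\[
\omega_f(x_0+m)=c^m\,\omega_f(x_0)\qquad(m\in\mathbb{Z})
\]
at every point $x_0$. Choose $x_0\in[0,1)$ with $\omega_f(x_0)\neq 0$, which exists by nonconstancy. If $|c|>1$, the magnitudes $|c^m\omega_f(x_0)|$ grow geometrically as $m\to+\infty$. The signs of $c^m$ are constant when $c>1$ and alternate when $c<-1$; in either case at least half of the resulting discontinuities contribute to a single one of the counting functions $N(r,f)$ or $N(r,1_{\circ}\oslash f)$. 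Using the definition (\ref{count}), the summation $|\omega_f(x_0)|\sum_{0\le m<r-x_0}|c|^m(r-x_0-m)\sim|c|^{\,r}/(|c|-1)^2$ yields $N(r,f)+N(r,1_{\circ}\oslash f)\geq C\,|c|^{\,r}$ for some $C>0$ and all large $r$. Together with the tropical Jensen formula (\ref{jensen}) this forces $T(r,f)\geq C'\,|c|^{\,r}$, so $\rho_2(f)\geq 1$, contradicting the hypothesis. The case $0<|c|<1$ is identical after replacing $m\to+\infty$ by $m\to-\infty$, giving $T(r,f)\geq C'\,|c|^{-r}$.

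For the representation formula, assume $c\neq 0,\pm 1$ and let $f$ be a solution with discontinuities $x_1,\dots,x_q\in[0,1)$. Set
\[
g(x):=\rho(c)\sum_{j=1}^q \omega_f(x_j)\,e_c(x-x_j),
\]
where $e_c$ is given by Proposition \ref{alfa} when $|c|>1$ and by Proposition \ref{beeta} when $|c|<1$. Since each $e_c(\,\cdot\,-x_j)$ satisfies the multiplicative functional equation, linearity immediately gives $g(x+1)=c\,g(x)$. Because the $x_j$ are pairwise distinct modulo~$1$, the discontinuities of the summands fall at the disjoint sets $x_j+\mathbb{Z}$; an explicit calculation of $\omega_{e_c}(0)$ from the piecewise-linear formulas, combined with the stated choice of $\rho(c)$, confirms that $\omega_g(x_j+m)=c^m\omega_f(x_j)=\omega_f(x_j+m)$ for all $j$ and $m\in\mathbb{Z}$. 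Thus $h:=f-g$ satisfies $\omega_h\equiv 0$ on $\mathbb{R}$, so Proposition \ref{prop:2} gives $h(x)=Ax+B$. Inserting into the equation $h(x+1)=ch(x)$ yields $A(1-c)=0$ and $A+B(1-c)=0$; since $c\neq 1$ we conclude $A=B=0$, whence $f=g$.

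The main technical obstacle is the exponential lower bound on $T(r,f)$ in the necessity step: one must verify that the multiplicities propagated by the functional equation really do accumulate in at least one of the two counting functions, rather than being lost to cancellation. The careful sign-tracking of $c^m\omega_f(x_0)$ according to the sign of $c$, together with the elementary asymptotics for the geometric-arithmetic sum $\sum_{m=0}^{M}|c|^m(M-m)$, is the technical core of the argument; once this is in hand the remaining steps (functional-equation propagation, verification of the ansatz $g$, and reduction to a linear function via Proposition \ref{prop:2}) are essentially mechanical.
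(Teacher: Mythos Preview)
Your argument is correct, and the representation part coincides with the paper's proof almost verbatim: define the candidate $g$, check that $\omega_g=\omega_f$ on $[0,1)$, propagate by the functional equation, and invoke Proposition~\ref{prop:2} to reduce to $Ax+B$, then kill $A$ and $B$ using $c\neq 1$.

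Where you differ is in the \emph{only if} direction. The paper does not argue it directly; instead the ordering is: first establish the representation $f=\rho(c)\sum_j\omega_f(x_j)e_c(\,\cdot\,-x_j)$, and then (implicitly) appeal to Proposition~\ref{order}, namely $\rho_2(e_c)=1$, together with the disjointness of the supports $x_j+\mathbb{Z}$, to conclude that any nonconstant solution inherits hyper-order one. Your route is more self-contained: from $\omega_f(x_0+m)=c^m\omega_f(x_0)$ you extract an exponential lower bound on $N(r,f)+N(r,1_\circ\oslash f)$ directly, and then the Jensen formula gives $T(r,f)\ge C'|c|^{r}$ (resp.\ $C'|c|^{-r}$) without ever needing the explicit representation or the growth calculation for $e_c$. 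This buys you independence from Section~\ref{tropexp}; conversely, the paper's ordering has the virtue that once the representation is known, the growth is read off from a single reference function rather than re-derived.
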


\begin{proof} First considering equation~(\ref{eqn:2}) with $n\in\mathbb{Z}$,
$e_n(x)$ is one of the solutions when $|n|>1$. On the other hand, as
$1$-periodic functions, all tropical meromorphic solutions of
$y(x+1)=y(x)$ have been determined. Since equation (\ref{eqn:2})
implies that $y(x+2)=(-1)^2y(x)=y(x)$, all non-trivial tropical
meromorphic solutions of $y(x+1)=-y(x)$ are given by
$y(x)=u(x+1)-u(x)$, where $u(x)$ is a $2$-periodic (but not
$1$-periodic) function. Hence, if $n=\pm 1$, all non-constant
tropical meromorphic solutions of (\ref{eqn:2}) are of order two.

\bigskip

We now proceed to giving the asserted representation of an arbitrary
tropical meromorphic solution $f$ to equation
\begin{equation} \label{eqn:c}
y(x+1)=y(x)^{\otimes c} =c\, y(x)
\end{equation}
for $c\in\mathbb{R}$ with $c \neq 0, \pm 1$ as a linear combination
of finite shifts of the function $e_{c}(x)$ over $\mathbb{R}$.
Indeed, given a non-trivial tropical meromorphic solution $f$ to
equation~(\ref{eqn:c}), there are only finitely many points $x_1,
\ldots , x_q$ in the interval $[0,1)$ on which $\omega_f(x)\neq 0$.
Define now a tropical meromorphic function as
$$
g(x):=\rho(c)\sum_{j=1}^q \omega_f(x_j) e_{c}(x-x_j) \quad
\text{with} \ \rho(c)= \left\{
\begin{array}{cc}
c-1 & (|c|>1), \\
1-c & (0<|c|<1)\,.
\end{array}
\right.
$$
Clearly, $g$ is a solution to equation (\ref{eqn:c}):
\begin{eqnarray*}
g(x+1)&=&\rho(c) \sum_{j=1}^q \omega_f(x_j)e_{c}(x+1-x_j)\\
        &=&c \left(\rho(c) \sum_{j=1}^q \omega_f(x_j) e_c(x-x_j)\right)\\
        &=&c\, g(x)\,.
\end{eqnarray*}
When $x \in [0,1)$, we see that $\omega_g(x)=0$ if $x\neq x_j$ and
$$\omega_g(x_j)=\omega_f(x_j)\rho(c) e_{c}(0)=\omega_f(x_j).$$
Making use of equation (\ref{eqn:c}) satisfied by both $f$ and $g$,
it is immediate to see that $\omega_g(x)\omega_f(x)$ holds for all
$x\in\mathbb{R}$. Therefore, we may apply Proposition~\ref{prop:2}
to conclude that $f(x)=g(x)+Ax+B$ for some real constants $A,
B\in\mathbb{R}$. By
$f(x+1)-cf(x)=g(x+1)+A(x+1)+B-cg(x)-cAx-cB=(1-c)Ax+A+(1-c)B=0$, we
conclude that $A=0$ and $B=0$, since $c\neq 1$. Therefore, $f=g$ on
the interval $[0,1)$.
\end{proof}

\begin{remark} When applying Theorem \ref{prop:first-order} in the
next section, we prefer to write the representation for $f$ in the
form
$$f(x)=\sum_{j=1}^{q}a_{j}e_{c}(x-x_{j}).$$
\end{remark}

\section{An application to ultra-discrete equations: second order}\label{second}

In this section,we are considering equation
\begin{equation} \label{eqn:3}
y(x+1)\otimes y(x-1) =y(x)^{\otimes c} \quad \text{i.e.} \quad
y(x+1)+y(x-1)=c\, y(x)
\end{equation}
for $c\in\mathbb{R}$. In the restricted setting of integer slopes,
the considerations in \cite{HS} show that whenever $c\in\mathbb{Z}$,
(\ref{eqn:3}) admits tropical meromorphic solutions of finite order
(with integer slopes) if and only if $c=0, \pm 1, \pm 2$. Observe in
particular, that the paper \cite{LY2} claiming that $c=\pm 2$ only
is possible, contains a slip in the reasoning. In our more general
setting of real slopes, tropical meromorphic solutions of finite
order exist for all $c\leq 2$, while for $c>2$, tropical meromorphic
solutions are of hyper-order $\rho_{2}=1$.

\medskip

In order to prove the existence of tropical meromorphic solutions to
equation (\ref{eqn:3}) and to obtain a representation for all of
them, let $a,b$ be the roots of $\lambda^{2}-c\lambda +1=0$. Then we
have, of course, $a+b=c$ and $ab=1$. We first prove

\begin{theorem}\label{prop:second} Tropical meromorphic solutions $f$
of equation (\ref{eqn:3}) with $|c|\geq 2$ exist and may be
represented in the following forms:

\medskip

(i) If $c=2$, then $f$ must be a linear combination of $L(x)=x$ and
a $1$-periodic tropical meromorphic function $\Pi_{1}(x)$.

\medskip

(ii) If $c=-2$, then
$$f(x)=\sum_{j=1}^{s}(-1)^{[x-x_{j}]}\Xi_{1,j}(x),$$
where $\Xi_{1,j}(x)$ is a $1$-periodic function such that
\begin{equation}\label{minustwo}
\lim_{\varepsilon\rightarrow
0}((-1)^{[n-\varepsilon-x_{j}]}\Xi_{1,j}(n-\varepsilon-x_{j})-(-1)^{[n+\varepsilon-x_{j}]}\Xi_{1,j}(n+\varepsilon-x_{j}))=0
\end{equation}
holds for all $n\in\mathbb{Z}$ and all $j=1,\ldots ,s$. Here
$x_{j}$:s are the slope discontinuities of $f$ in the interval
$[0,2)$.

\medskip

(iii) If $|c|>2$, then
$$f(x)=\sum_{j=1}^{p}\alpha_{j}e_{a}(x-y_{j})+\sum_{j=1}^{q}\beta_{j}e_{a}(-x+x_{j}),$$
where $y_{j}$, resp. $x_{j}$, are the points of slope discontinuity
of $f$ in the interval $[0,1)$, resp. in $[0,2)$.
\end{theorem}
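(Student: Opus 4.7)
The plan is to mimic the proof strategy of Theorem~\ref{prop:first-order}: first verify that each proposed form does satisfy (\ref{eqn:3}), then for a given solution $f$ construct a candidate $g$ of the same form with $\omega_g = \omega_f$, and finally use Proposition~\ref{prop:2} together with (\ref{eqn:3}) itself to conclude $f = g$. Verification is a direct substitution in each case. For (i), both $L(x) = x$ and any $1$-periodic $\Pi_{1}$ satisfy $y(x+1) + y(x-1) = 2y(x)$. For (ii), the sign identity $(-1)^{[x \pm 1 - x_{j}]} = -(-1)^{[x - x_{j}]}$ combined with the $1$-periodicity of $\Xi_{1,j}$ yields $y(x+1) + y(x-1) = -2 y(x)$, and the matching condition (\ref{minustwo}) is precisely what keeps each summand continuous at the integer translates of $x_{j}$, where the sign flip would otherwise produce a jump. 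For (iii), Proposition~\ref{alfa} combined with $a + b = c$ and $ab = 1$ yields
\[
e_{a}(x - y_{j} + 1) + e_{a}(x - y_{j} - 1) = \left(a + \tfrac{1}{a}\right) e_{a}(x - y_{j}) = c \cdot e_{a}(x - y_{j}),
\]
and an analogous computation handles $e_{a}(-x + x_{j})$.

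To construct the matching candidate, I would apply the linearity of $\omega$ (as used in the proof of Lemma~\ref{lemma4}) to (\ref{eqn:3}), obtaining the pointwise recurrence $\omega_{f}(x + 1) + \omega_{f}(x - 1) = c\,\omega_{f}(x)$. Hence on each $\mathbb{Z}$-orbit of a slope discontinuity $y \in [0,1)$, the sequence $u_{n} := \omega_{f}(y + n)$ satisfies the scalar recurrence with characteristic polynomial $\lambda^{2} - c \lambda + 1$. In case (iii), since $a \neq b$, the orbit sequence is $u_{n} = A\,a^{n} + B\,b^{n}$; a direct computation from Definition~\ref{def:1} shows that $e_{a}(x - y)$ contributes $(1 - 1/a)\,a^{n}$ at $y + n$ while $e_{a}(-x + y)$ contributes $(1 - b)\,b^{n}$, so the $2 \times 2$ linear system for the pair $(\alpha, \beta)$ is nonsingular exactly because $a \neq b$, equivalent to $|c| > 2$. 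Summing over orbits produces a tropical meromorphic $g$ of the stated form with $\omega_{g} \equiv \omega_{f}$. In the coincident-root cases (i) and (ii), the orbit solutions collapse to constants resp. alternating signs; these are matched by building $\Pi_{1}$ from the functions $\pi^{(a,b)}$ of Section~\ref{periodic}, resp. by the anti-periodic combinations $(-1)^{[x - x_{j}]} \Xi_{1,j}(x)$ with (\ref{minustwo}) imposed as a continuity constraint.

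Once $\omega_{g} = \omega_{f}$, Proposition~\ref{prop:2} forces $h := f - g$ to be linear, $h(x) = A x + B$. Substituting this into (\ref{eqn:3}) gives $(c - 2)\,A\,x + (c - 2)\,B = 0$, which for $|c| > 2$ forces $A = B = 0$ and hence $f = g$ as in case (iii). In case (i) with $c = 2$ the identity is vacuous, so the leftover $Ax + B$ is absorbed into the linear term $L(x)$ together with a constant shift of $\Pi_{1}$, producing the stated representation. In case (ii) with $c = -2$ the substitution reduces to $4Ax + 4B = 0$, forcing again $A = B = 0$ after the constant piece has been absorbed into the $\Xi_{1,j}$'s. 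The main obstacle is the bookkeeping in the double-root cases (i) and (ii): the natural orbit basis $\{a^{n}, b^{n}\}$ collapses, so the matching must rely on the explicit periodic-function representations of Section~\ref{periodic}, and in case (ii) the continuity requirement (\ref{minustwo}) must be derived as a necessary consistency condition rather than simply postulated.
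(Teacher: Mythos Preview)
Your approach for case~(iii) is correct and genuinely different from the paper's. The paper factors the shift operator as $y(x+1)-ay(x)=b\bigl(y(x)-ay(x-1)\bigr)$ and then applies Theorem~\ref{prop:first-order} twice: first to $g(x):=y(x)-ay(x-1)$, which satisfies $g(x+1)=bg(x)$, and then to $y-y_{0}$ for a suitable particular solution $y_{0}$. Your direct $\omega$-matching along $\mathbb{Z}$-orbits bypasses this reduction of order and is arguably more transparent once one has the formula $\omega_{e_{a}}(m)=(1-1/a)a^{m}$; the paper's method, on the other hand, extends more mechanically to higher-order constant-coefficient equations. One small point: you should take as orbit representatives the residues mod~$1$ of \emph{all} slope discontinuities of $f$ in $[0,2)$, not just those lying in $[0,1)$, since an orbit may carry nonzero $\omega_{f}$ without having a representative discontinuity in $[0,1)$.

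There is, however, a genuine gap in cases~(i) and~(ii). You assert that in the double-root cases ``the orbit solutions collapse to constants resp.\ alternating signs'', but this is false at the level of the $\omega$-recurrence alone: for $c=2$ the recurrence $u_{n+1}+u_{n-1}=2u_{n}$ has general solution $u_{n}=A+Bn$, and for $c=-2$ it is $u_{n}=(-1)^{n}(A+Bn)$. Your proposed building blocks---$L(x)=x$ and $1$-periodic $\Pi_{1}$ for~(i), and the anti-periodic terms $(-1)^{[x-x_{j}]}\Xi_{1,j}(x)$ for~(ii)---all have $\omega$-sequences with $B=0$ on every orbit, so your construction of $g$ cannot match $\omega_{f}$ unless you have \emph{first} shown that $B_{j}=0$ for every orbit of $f$. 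This does not follow from the $\omega$-recurrence; it requires the piecewise-linearity of $f$ itself. The paper supplies exactly this missing step by a reduction-of-order argument: for $c=2$ one sets $\Pi_{1}(x):=y(x)-y(x-1)$, observes that $\Pi_{1}$ is $1$-periodic, and notes that the formal solution $x\Pi_{1}(x)+\Xi_{1}(x)$ of $y(x)-y(x-1)=\Pi_{1}(x)$ is piecewise linear only if $\Pi_{1}$ is constant (and analogously for $c=-2$). Without an argument of this kind, your $\omega$-matching strategy cannot close in the coincident-root cases.
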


\begin{proof} To start the proof, observe that equation
(\ref{eqn:3}) may be written in the form

\begin{equation}\label{3m}
y(x+1)-ay(x)=b(y(x)-ay(x-1)).
\end{equation}

We first consider the case $c=2$. Then $a=b=1$. Thus, equation
(\ref{3m}) now is
$$
y(x+1)-y(x)=y(x)-y(x-1).
$$
This equation has two classes of tropical meromorphic solutions: All
$1$-periodic tropical meromorphic functions that satisfy equation
$y(x)-y(x-1)=0$, and tropical meromorphic solutions of
$y(x)-y(x-1)=\Pi_{1}(x)$ with a $1$-periodic function $\Pi_{1}(x)$.
The latter class consists of $x\Pi_{1}(x)+\Xi_{1}(x)$, where
$\Xi_{1}(x)$ is an arbitrary $1$-periodic tropical meromorphic
function. Since tropical meromorphic functions are continuous and
piecewise linear, $\Pi_{1}(x)$ in the latter has to a constant.
Therefore, the general solution of (\ref{eqn:3}) with $c=2$ consists
of linear functions, of $1$-periodic tropical meromorphic functions
and of their linear combinations.

\bigskip

When $c=-2$, then equation (\ref{3m}) is
$$
y(x+1)+y(x)=-\{y(x)+y(x-1)\}.
$$
In order to construct solutions of this equation, we first observe
that
$$g(x):=y(x)+y(x-1)$$
has to be anti-$1$-periodic: $g(x+1)=-g(x)$. Therefore, $g(x)$ may
be written in the form
$$g(x)=\sum_{j=1}^{p}(-1)^{[x-x_{j}]}\Xi_{1,j}(x),$$
where each $\Xi_{1,j}(x)$ is a $1$-periodic function and the
$x_{j}$:s are the slope discontinuities of $y$ in the interval
$[0,2)$. If $g$ vanishes, then $y$ itself may be written in the same
form
$$y(x)=\sum_{j=1}^{p}(-1)^{[x-x_{j}]}\Xi_{1,j}(x).$$
Moreover, since $y$ has to be tropical meromorphic, hence
continuous, $\Xi_{1}(x)$ has to satisfy the condition
(\ref{minustwo}) for all $n\in\mathbb{Z}$. If $g$ does not vanish,
then we obtain have
$$y(x+1)+y(x)=\sum_{j=1}^{p}(-1)^{[x-x_{j}]}\Xi_{1,j}(x).$$
By linearity, $y(x)$ is then the sum of a solution of the
homogeneous equation $y(x)+y(x-1)$, already treated, and special
solutions $y_{j}(x)$ of
$$y_{j}(x+1)+y_{j}(x)=(-1)^{[x-x_{j}]}\Xi_{1,j}(x), \quad j=1,\ldots ,p.$$
But these solutions $y_{j}(x)$ may now be written in the form
$$y_{j}(x)=-x(-1)^{[x-x_{j}]}\Xi_{1,j}(x).$$
where $\Xi_{1,j}(x)$:s are $1$-periodic functions. Since each
$y_{j}$ has to be piecewise linear, we observe that $\Xi_{1,j}$ has
to be a constant, and in fact equal to zero by the continuity of
$y_{j}$. Therefore, indeed, $g(x)$ has to vanish identically, and we
are done with the case $c=-2$.

\bigskip

Assuming now that $|c|>2$, we have that $a,b$ are real and distinct.
We may assume that $a>1$. Denoting now
\begin{equation}\label{gequ}
g(x):=y(x)-ay(x-1),
\end{equation}
we see from (\ref{3m}) that $g$ satisfies
$$g(x+1)-bg(x)=0.$$
By Theorem \ref{prop:first-order}, $g(x)$ has the representation
\begin{equation}\label{grep}
g(x)=\sum_{j=1}^{q}\beta_{j}e_{b}(x-x_{j}),
\end{equation}
where the slope discontinuities of $g(x)$ in the interval $[0,1)$
are at $x_{1},\ldots ,x_{q}$. Clearly, these are nothing but the
discontinuities of slopes of $y(x)$ in the interval $[0,2)$. It may
happen that some slope discontinuities of $y(x)$ in $[0,1)$ and
$[1,2)$ cancel for $g(x)$. To avoid complicated notations, such
cases are being included in the sum (\ref{grep}) with a zero
coefficient $\beta_{j}$.

\medskip

We next determine special tropical solutions of
\begin{equation}\label{spec}
y_{j}(x+1)-ay_{j}(x)=b\beta_{j}e_{b}(x-x_{j})
\end{equation}
for each $j=1,\ldots ,q$. These are immediately found by
substituting $y_{j}(x)=A_{j}e_{b}(x-x_{j})$ in (\ref{spec}) to
determine the constants $A_{j}=b\beta_{j}/(b-a)$. Therefore,
$$y_{0}(x)=\sum_{j=1}^{q}\frac{b\beta_{j}}{b-a}e_{b}(x-x_{j})$$
satisfies
$$y_{0}(x+1)-ay_{0}(x)=bg(x)=\sum_{j=1}^{q}b\beta_{j}e_{b}(x-x_{j}).$$
But now, the difference $y_{H}(x):=y(x)-y_{0}(x)$ satisfies
$$y_{H}(x+1)-y_{H}(x)=y(x+)-y(x)-(y_{0}(x+1)-y_{0}(x))=g(x+1)-bg(x)=0.$$
By Theorem \ref{prop:first-order}, we may write $y_{H}(x)$ in the
form
$$y_{H}(x)=\sum_{j=1}^{p}\alpha_{j}e_{a}(x-\xi_{j}),$$
and we obtain the representation
$$y(x)=\sum_{j=1}^{p}\alpha_{j}e_{a}(x-\xi_{j})+\sum_{j=1}^{q}\frac{b\beta_{j}}{b-a}e_{b}(x-x_{j}).$$
Recalling the identity $e_{a}(-x)=\frac{1}{a}e_{1/a}(x)$, we finally
get
$$y(x)=\sum_{j=1}^{p}\alpha_{j}e_{a}(x-\xi_{j})+\sum_{j=1}^{q}\frac{b\beta_{j}}{b-a}e_{b}(-x+x_{j}).$$
To complete the proof, it remains to observe that all tropical
meromorphic functions of type
$$y_{1}(x)=e_{a}(x-s), \qquad y_{2}(x)=e_{a}(-x+t)$$
are solutions to equation (\ref{eqn:3}) as well.
\end{proof}

We now proceed to considering equation~(\ref{eqn:3}) in the case
when $0<|c|<2$. Then the two roots $\lambda_{\pm}$ are complex
conjugates, so that we may put $\lambda_{+}=re^{i\theta}$ and
$\lambda_{-}=r^{-1}e^{-i\theta}$ with real constants $r(\geq 1)$ and
$\theta\in[0,2\pi)$. Since $\Im c=
\Im(\lambda_{+}+\lambda_{-})=(r-r^{-1})\sin\theta=0$, we must have
either $r=1$ or $\theta=0, \pi$. But $\theta=0, \pi$ means that
$|c|=2$, thus we must have $r=1$. Therefore $c=2\cos \theta$
$(0<\theta<\pi)$ with $\lambda_{\pm}=e^{\pm i\theta}$.

\medskip

It is a routine computation to show that
$$Y(x):=e^{i\theta [x]}\left(x-[x]+\frac{1}{e^{i\theta}-1}\right)$$
is a formal solution to equation (\ref{eqn:3}). Therefore,
$y_{1}(x):=\Re Y(x)$ and $y_{2}(x):=\Im Y(x)$ also satisfy
(\ref{eqn:3}). By a straightforward computation, we obtain
\begin{equation}\label{y1}
y_{1}(x)=(\cos (\theta [x]))(x-[x])+\frac{(\cos (\theta
[x]))(\cos\theta -1)+\sin (\theta [x])\sin\theta}{2(1-\cos\theta)}
\end{equation}
and
\begin{equation}\label{y2}
y_{2}(x)=(\sin (\theta
[x]))(x-[x])+\frac{(\sin (\theta [x]))(\cos\theta -1)+(\cos (\theta
[x]))\sin\theta}{2(1-\cos\theta)}.
\end{equation}
These functions are tropical meromorphic, provided they are
continuous at each integer $m\in\mathbb{Z}$. As for $y_{1}(x)$, this
follows by verifying that
$$\cos (\theta [x])+\frac{(\cos (\theta [x]))(\cos\theta -1)+(\sin (\theta [x]))\sin\theta}{2(1-\cos\theta)}$$
$$=\frac{(\cos (\theta [x]+\theta))(\cos\theta -1)+(\sin (\theta [x]+\theta))\sin\theta}{2(1-\cos\theta)};$$
this is an elementary computation. The continuity of $y_{2}(x)$
follows in the same manner. Of course, $y_{j}(x-s)$ and
$y_{j}(-x+t)$, $j=1,2$, are tropical meromorphic solutions to
equation (\ref{eqn:3}) as well. Therefore, we have the following

\begin{theorem}\label{exist} Equation (\ref{eqn:3}) admits tropical
meromorphic solutions for all $c\in\mathbb{R}$.
\end{theorem}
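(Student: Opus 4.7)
The plan is to assemble the existence claim by appealing to the two separate regimes already treated in the excerpt. For $|c|\geq 2$ the conclusion follows at once from Theorem \ref{prop:second}, which exhibits explicit families of tropical meromorphic solutions in each of the sub-cases $c=2$, $c=-2$ and $|c|>2$ (for instance $f(x)=x$ for $c=2$; a single summand $(-1)^{[x-x_{1}]}\Xi_{1,1}(x)$ for $c=-2$; and a single tropical exponential $f(x)=e_{a}(x)$ for $|c|>2$, where $a$ is the larger root of $\lambda^{2}-c\lambda+1=0$). Each is a genuine non-trivial tropical meromorphic solution by construction.

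For the remaining range $|c|<2$, I would parameterize $c=2\cos\theta$ with $\theta\in(0,\pi)$, noting that this interval also supplies $c=0$ (at $\theta=\pi/2$). With the complex-valued object $Y(x)=e^{i\theta[x]}\bigl(x-[x]+1/(e^{i\theta}-1)\bigr)$ already introduced above, a direct substitution using $[x\pm 1]=[x]\pm 1$ and $e^{i\theta}+e^{-i\theta}=c$ shows that $Y(x+1)+Y(x-1)=cY(x)$ on all of $\mathbb{R}$. Since $c$ is real, taking real and imaginary parts yields that the functions $y_{1}(x)=\Re Y(x)$ and $y_{2}(x)=\Im Y(x)$ given explicitly in (\ref{y1}) and (\ref{y2}) both satisfy (\ref{eqn:3}); each is piecewise linear on every interval $[m,m+1)$, $m\in\mathbb{Z}$, so tropical meromorphy reduces to continuity at integer points. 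For $y_{1}$ this is the trigonometric identity already recorded in the paragraph preceding the theorem; the companion identity
\[
\sin(\theta[x])+\frac{\sin(\theta[x])(\cos\theta-1)+\cos(\theta[x])\sin\theta}{2(1-\cos\theta)}=\frac{\sin(\theta[x]+\theta)(\cos\theta-1)+\cos(\theta[x]+\theta)\sin\theta}{2(1-\cos\theta)}
\]
needed for $y_{2}$ follows from the angle-addition formulas in exactly the same way. Since $Y$ is not identically zero, at least one of $y_{1},y_{2}$ is non-constant, providing the required tropical meromorphic solution of (\ref{eqn:3}).

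Combining the two regimes covers every $c\in\mathbb{R}$, and the theorem is proved. The only substantive work-point is the verification of the formal identity $Y(x+1)+Y(x-1)=cY(x)$, but given the closed form of $Y$ together with the relations $[x\pm 1]=[x]\pm 1$ and $c=e^{i\theta}+e^{-i\theta}$, this reduces to straightforward bookkeeping; no further analytic input is needed because the continuity and linearity are manifest from the formulas.
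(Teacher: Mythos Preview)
Your proposal is correct and follows essentially the same route as the paper: invoke Theorem \ref{prop:second} for $|c|\geq 2$, and for $|c|<2$ write $c=2\cos\theta$ with $\theta\in(0,\pi)$, take real and imaginary parts of the same complex $Y(x)$, and verify continuity at integers via the trigonometric identities already indicated. Your version is slightly more careful in that you explicitly record the companion identity for $y_{2}$ and note that at least one of $y_{1},y_{2}$ is non-constant, but there is no substantive difference in method.
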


\begin{remark} We conjecture that an arbitrary tropical meromorphic solution $y$ to
(\ref{eqn:3}) in the case $|c|<2$ may be represented as a linear
combination of shifts of (\ref{y1}) and (\ref{y2}), similarly as
already done for the case of $|c|>2$ in Theorem
\ref{prop:second}(iii). We have not yet been able to prove this. To
illustrate the situation, first observe that all tropical
meromorphic solutions are $3$-periodic in the case of $c=-1$.
Indeed, combining
$$y(x+1)+y(x-1)+y(x)=0$$
and
$$y(x+2)+y(x)+y(x+1)=0$$
results in $y(x+2)=y(x-1)$. Recall now that in the case $c=-1$ the
basic solutions to (\ref{eqn:c}) obtained from (\ref{y1}) and
(\ref{y2}) above are
\begin{equation}\label{3minus1}
y_{1}(x)=(\cos (\frac{2\pi}{3}[x]))(x-[x])+\frac{\sqrt{3}}{6}\sin
(\frac{2\pi}{3}[x])-\frac{1}{2}\cos (\frac{2\pi}{3}[x])
\end{equation}
and \begin{equation}\label{3minus2} y_{2}(x)=(\sin
(\frac{2\pi}{3}[x]))(x-[x])-\frac{1}{2}\sin
(\frac{2\pi}{3}[x])-\frac{\sqrt{3}}{6}\cos (\frac{2\pi}{3}[x]).
\end{equation}
On the other hand, it is not difficult to verify that a tropical
meromorphic function $y(x)$ defined by its values in the primitive
period interval $[0,3)$ as
\begin{equation}\label{3minussp}
y(x) = \left\{
\begin{array}{cl}
-\Delta(x) & 0\leq x\leq 1/3 \\
0  & 1/3\leq x\leq 2/3 \\
-\Delta(x-2/3) & 2/3\leq x\leq 1 \\
+\Delta(x-1) & 1\leq x\leq 4/3 \\
0 & 4/3\leq x\leq 8/3 \\
+\Delta(x-8/3) & 8/3\leq x<3,
\end{array}
\right. 
\end{equation}
where $\Delta(x):=x$ when $0\leq x\leq 1/6$, and $\Delta(x):=-x+1/3$
when $1/6\leq x\leq 1/3$, is a solution to equation (\ref{eqn:3}) in
the case of $c=-1$. This function has four poles, resp. five roots,
in the period interval $[0,3)$ so that there are three double and
one single pole, resp. two double and three single roots, see the
related depictions of (\ref{3minus1}) (red), (\ref{3minus2}) (green)
and (\ref{3m}) (blue) in Figure $3$ below.
\begin{figure}[h!]
  \begin{center}
    \includegraphics[keepaspectratio]{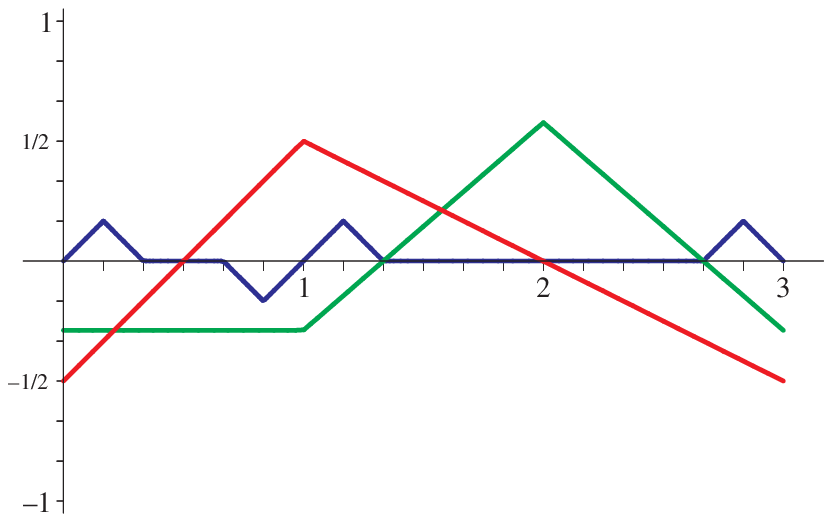}
  \end{center}
 \caption{}
 \label{fig:1}
\end{figure}
We have not been able to determine whether (\ref{3minussp}) could be
represented as a finite linear combination of shifts of
(\ref{3minus1}) and (\ref{3minus2}).
\end{remark}

\begin{remark} We also remark that the solutions (\ref{y1}), (\ref{y2}) to
equation (\ref{eqn:3}) with $c=2\cos\theta$ are tropical periodic,
hence of finite order two, as soon as $\theta$ is of the form
$\theta=2\pi /r$ with $r\in\mathbb{Q}$, while if $\theta$ is an
irrational multiple of $2\pi$, then these solutions are
non-periodic. However, their order is equal to two, see Theorem
\ref{order2} below.
\end{remark}

As a preparation to our final theorem, we add the following
observation. Let $y(x)$ be an arbitrary tropical meromorphic
solution of

\begin{equation}\label{theta}
y(x+1)+y(x-1)=2(\cos\theta)y(x)
\end{equation}

with $\theta\in (0,\pi)$. In this case, the solutions $a,b$ of
$\lambda^{2}-(2\cos\theta)\lambda +1=0$ are $\cos\theta\pm
i\sin\theta$, and so equation (\ref{3m}) now takes the form
$$y(x+1)-(\cos\theta +i\sin\theta)y(x)=(\cos\theta -i\sin\theta)(y(x)-(\cos\theta +i\sin\theta)y(x-1)).$$
Denoting $g(x)=s(x)+it(x):=y(x)-(\cos\theta +i\sin\theta)y(x-1)$, we
see that $g(x)$ is $1$-periodic in the sense of $g(x+1)=(\cos\theta
-i\sin\theta)g(x)$. A simple computation now results in

\begin{equation}\label{matrix}
\left(
    \begin{array}{c}
      s(x+1) \\
      t(x+1) \\
    \end{array}\right)=\left(
                   \begin{array}{cc}
                     \cos\theta & \sin\theta \\
                     -\sin\theta & \cos\theta \\
                   \end{array}
                 \right)\left(
                          \begin{array}{c}
                            s(x) \\
                            t(x) \\
                          \end{array}
                        \right).
\end{equation}

This means, in particular, that the shift $(s(x),t(x))\rightarrow
(s(x+1),t(x+1))$ is a rotation, and since the image of
$(s[0,1],t[0,1])$ is bounded, the vector $(s(x),t(x))$ remains
bounded over the real axis $\mathbb{R}$. Since $y(x)$ is real, we
immediately see that
\begin{equation}\label{yst}
s(x)=y(x)-(\cos\theta)y(x-1), \qquad t(x)=(\sin\theta)y(x-1),
\end{equation}
i.e.
$$\left(
    \begin{array}{c}
      s(x) \\
      t(x) \\
    \end{array}
  \right)=\left(
            \begin{array}{cc}
              1& -\cos\theta \\
              0 & -\sin\theta \\
            \end{array}
          \right)\left(
                   \begin{array}{c}
                     y(x) \\
                     y(x-1) \\
                   \end{array}
                 \right).
$$
Therefore, $y(x)$ is a bounded tropical meromorphic function.

\begin{theorem}\label{order2} Let $y(x)$ be a non-trivial tropical meromorphic
solution of equation
$$y(x+1)+y(x-1)=cy(x).$$
If $|c|>2$, then $y$ is of hyper-order $\rho_{2}(y)=1$, while if
$|c|\leq 2$, then $y$ is of order $\rho (y)=2$.
\end{theorem}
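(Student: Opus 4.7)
The plan splits into the three cases according to the discriminant of the characteristic equation $\lambda^{2}-c\lambda+1=0$, exactly mirroring the classification already worked out in Theorem \ref{prop:second} and in the rotation argument given just above the statement.

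\textbf{Case $|c|>2$.} Let $a,b$ be the real roots with $|a|>1>|b|$ and $ab=1$. By Theorem \ref{prop:second}(iii) any non-trivial tropical meromorphic solution $y$ is a finite linear combination of shifts of $e_{a}$ and of $e_{a}(-\,\cdot\,+x_{j})$; the latter equals $(1/a)e_{1/a}(\,\cdot\,-x_{j})$ by Proposition \ref{alfabeta}. Since neither $a$ nor $1/a$ equals $\pm 1$, Proposition \ref{order} gives that each summand is of hyper-order exactly $1$. Subadditivity of the characteristic function then yields $\rho_{2}(y)\leq 1$. For the matching lower bound the plan is to isolate the dominant exponential growth: the shifts of $e_{a}$ grow like $|a|^{[r]}$, while the shifts of $e_{1/a}$ only grow like $|1/a|^{[r]}$, so the dominant base cannot be cancelled by the slower terms; together with a linear independence argument showing that at least one coefficient of a shift of $e_{a}$ (or, if all those vanish, of $e_{1/a}$) must be nonzero for a non-trivial $y$, this forces $T(r,y)\gtrsim|a|^{\,r}/C$, hence $\rho_{2}(y)\geq 1$.

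\textbf{Case $|c|=2$.} Theorem \ref{prop:second}(i) (resp. (ii)) represents $y$ as a linear combination of a linear function $L(x)=x$ with a tropical meromorphic $1$-periodic function $\Pi_{1}$ (resp. as an anti-$1$-periodic sum of the form $\sum(-1)^{[x-x_{j}]}\Xi_{1,j}(x)$, which is $2$-periodic). The linear piece contributes only $O(r)$ to $T(r,y)$ by the example following Theorem \ref{FMT}, whereas by Proposition \ref{prop:3} any non-constant tropical periodic piece satisfies $T(r,\,\cdot\,)\asymp\kappa r^{2}$. For a non-trivial solution the periodic piece must be non-constant, and hence $\rho(y)=2$.

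\textbf{Case $|c|<2$.} Writing $c=2\cos\theta$ with $\theta\in(0,\pi)$, the rotation identity (\ref{matrix}) established just above Theorem \ref{order2} shows that $(s(x+1),t(x+1))$ is obtained from $(s(x),t(x))$ by rotation through angle $\theta$, so the orbit $\{(s(x),t(x))\,:\,x\in\mathbb{R}\}$ is the union of the rotates of the continuous curve $(s([0,1]),t([0,1]))$ and is therefore bounded. Inverting (\ref{yst}) shows that $y$ itself is bounded, so $m(r,y)=O(1)$ and $T(r,y)=N(r,y)+O(1)$. The plan then is to count slope discontinuities: for a non-trivial $y$, the rotational dynamics forces at least one slope discontinuity per period (equivalently, the bounded continuous piecewise linear function $y$ cannot be linear on any infinite interval without being constant, contradicting non-triviality under the equation), so $n(r,y)\asymp r$ and consequently $N(r,y)\asymp r^{2}$; this delivers $\rho(y)=2$.

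\textbf{Main obstacle.} In each case the representation theorems and the rotation argument supply the upper bound on growth essentially for free; the delicate step is the matching lower bound. For $|c|>2$ this is a linear independence / dominant-exponential argument among shifts of $e_{a}$ and $e_{1/a}$ that must rule out accidental cancellations. For $|c|<2$ it is a uniform lower density estimate for the slope discontinuities of a non-trivial bounded solution, which has to be extracted from the rotation structure (\ref{matrix}) together with the piecewise-linearity of $y$. These are the two steps I would expect to require the most care.
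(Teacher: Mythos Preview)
Your overall case split and the $|c|\le 2$ argument are essentially the paper's: it too defers $c=\pm 2$ to Theorem~\ref{prop:second} and Proposition~\ref{prop:3}, and for $|c|<2$ uses the rotation identity (\ref{matrix}) together with boundedness of $y$. The paper sharpens your counting step by \emph{differentiating} (\ref{matrix}), which shows that the slopes of $y$ (and hence all pole multiplicities) stay uniformly bounded, and by observing that the rotation in (\ref{matrix}) is a bijection on slope discontinuities, so every interval $[n,n+1)$ carries exactly the same finite number of them as $[0,1)$; this yields $N(r,y)\asymp r^{2}$ without the separate density estimate you flag as an obstacle. (One slip in your $c=2$ case: ``the periodic piece must be non-constant'' is false---$y(x)=x$ is a non-trivial solution with trivial periodic part---though the paper's own proof glosses over this point as well.)

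The substantive divergence is the $|c|>2$ lower bound. You propose to argue directly on the representation of $y$ as a mixture of shifts of $e_{a}$ and $e_{1/a}$ and then rule out cancellations; you rightly identify this as the delicate step. The paper avoids it entirely by returning to the auxiliary function $g(x):=y(x)-ay(x-1)$ from (\ref{gequ}). Because $e_{a}(x)-a\,e_{a}(x-1)\equiv 0$, the operator annihilates the $e_{a}$-part, leaving $g=\sum_{j}\beta_{j}e_{b}(x-x_{j})$ with the $x_{j}$ \emph{distinct} in $[0,1)$. The slope discontinuities of the summands then sit on pairwise disjoint sets $x_{j}+\mathbb{Z}$, so no cancellation can occur: picking any $s$ with $\beta_{s}\neq 0$ gives
\[
2T(r,g)\ \ge\ N(r,g)+N(r,1_{\circ}\oslash g)\ \ge\ K\,(1/b)^{r},
\]
hence $\rho_{2}(g)=1$, and then $g(x)=y(x)-ay(x-1)$ forces $\rho_{2}(y)\ge 1$. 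This pass to $g$---reducing to a single exponential base with disjoint supports---is the trick that turns your anticipated linear-independence obstacle into a one-line observation.
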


\begin{proof} Suppose first that $|c|\leq 2$. By Theorem
\ref{prop:second}(i)(ii), we may assume that $|c|<2$.
Differentiating (\ref{matrix}), we observe that the slope of $y(x)$
remains uniformly bounded in $\mathbb{R}$. Therefore, the
multiplicities of poles of $y(x)$ are uniformly bounded as well. By
(\ref{yst}) and (\ref{matrix}), we conclude that the number of
distinct slope discontinuities is the same in each interval
$[n,n+1)$ as in the initial interval $[0,1)$. Therefore,
$N(r,y)\asymp \kappa r^{2}$ for some $\kappa
>0$, and since $y(x)$ is bounded, we get $\rho_{2}(y)=2$.

\medskip

Suppose next that $|c|>2$. Recall the proof of Theorem
\ref{prop:second}, see (\ref{grep}) and (\ref{gequ}) there, and fix
$a>1,b<1$ as in this proof. If the hyper-order $\rho_{2}(g)\geq 1$,
then (\ref{gequ}) readily implies that $\rho_{2}(y)\geq 1$. On the
other hand, by the representation of $y$ in Theorem
\ref{prop:second}(iii), and the fact that $\rho_{2}(e_{a}(x))=1$, we
get $\rho_{2}(y)\leq 1$, hence $\rho_{2}(y)=1$. To prove that
$$g(x):=\sum_{j=1}^{q}\beta_{j}e_{b}(x-x_{j})$$
is of hyper-order one, observe that $e_{b}(x-x_{j})$ has no poles
and has zeros exactly at $x_{j}+k$, $k\in\mathbb{Z}$, of
multiplicity $(1-b)\mathbb{b}^{k-1}$. Since the points $x_{j}$ are
distinct, we may fix one of $e_{b}(x-x_{1}),\ldots ,e_{b}(x-x_{q})$,
say $e_{b}(x-x_{s})$, to conclude that
$$2T(r,g)\geq N(r,g)+N(r,1_{\circ}\oslash g)\geq N(r,1_{\circ}\oslash e_{b}(x-x_{s}))\geq K(1/b)^{r}$$
for some positive constant $K$, completing the proof.
\end{proof}

\textbf{Remark.} In a similar way as made above for (\ref{eqn:3}),
we could also treat the slightly more general equation
$$
y(x+1)\otimes y(x-1)^{\otimes d} =y(x)^{\otimes c} \quad \text{i.e.}
\quad y(x+1)-c\, y(x)+d\, y(x-1)=0
$$
for $c,d\in\mathbb{R}$ with $c^2> d$. In fact, the characteristic
equation $\rho^2-c\rho(x)+d=0$ has two real roots $\lambda_1$ and
$\lambda_2$ with $\lambda_1+\lambda_2=c$ and $\lambda_1\lambda_2=d$.
We omit these considerations.

\section{Discussion}\label{disc}

We remark that Sections \ref{Nlinna} to \ref{periodic} open up a
number of possibilities for further investigations such as possible
tropical counterparts to deficiencies and ramifications, value
distribution theory of tropical difference polynomials and
uniqueness theory of meromorphic functions, among other issues.

\medskip

Concerning our applications to ultra-discrete equations, Section
\ref{infinite} and Section \ref{second}, have been restricted,
essentially, to a couple of specific examples only, avoiding linear
ultra-discrete equations that contain tropical addition, as well as
non-homogeneous linear equations and tropical non-linear equations.

\medskip

The topics pointed out here will be treated, we hope, in subsequent
papers.

\bigskip

\textbf{Acknowledgment.} The first author is grateful for a
possibility to visit Kanazawa University twice in 2009. These short
periods enabled us to start preparing and to finishing this paper.

\end{document}